\numberwithin{equation}{section} \numberwithin{figure}{section}
\theoremstyle{definition}
\newtheorem*{defn*}{Definition} \theoremstyle{plain}
\newtheorem{thm}{Theorem}
\newtheorem{lem}[thm]{Lemma}
\newtheorem{claim}[thm]{Claim}
\newtheorem*{lem*}{Lemma}
\newtheorem*{claim*}{Claim}
\newtheorem*{thm*}{Theorem} \theoremstyle{remark}
\newtheorem*{rem*}{Remark}
\numberwithin{thm}{section}
\begin{document}
\bibliographystyle{plain}

\title{Glauber Dynamics of colorings on trees}

\author{Allan Sly \thanks{University of California, Berkeley and Australian National University. Supported by an Alfred Sloan Fellowship and NSF grant DMS-1208338. Email:sly@stat.berkeley.edu} \and Yumeng Zhang \thanks{University of California, Berkeley. Email:zym3008@berkeley.edu} }

\date{}
\maketitle
\begin{abstract}
	The mixing time of the Glauber dynamics for spin systems on trees is closely related to reconstruction problem. Martinelli, Sinclair and Weitz established this correspondence for a class of spin systems with soft constraints bounding the log-Sobolev constant by a comparison with the block dynamics~\cite{martinelli2004glauber,martinelli2007fast}.
	However, when there are hard constraints, the block dynamics may be reducible.
	
	We introduce a variant of the block dynamics extending these results to a wide class of spin systems with hard constraints. This applies for essentially any spin system that has non-reconstruction provided that on average the root is not locally frozen in a large neighborhood. In particular we prove that the mixing time of the Glauber dynamics for colorings on the regular tree is $O(n\log n)$ in the entire known non-reconstruction regime.
\end{abstract}

\section{Introduction}

There has been substantial interest in understanding the rate of convergence of the Glauber dynamics for spin systems on trees and in particular how the mixing times scaling relates to the spatial mixing properties of the Gibbs measure. In the case of the coloring model, the natural conjecture is that there is rapid mixing (the mixing time is $O(n\log n)$) whenever the model is in the reconstruction regime. This was previously shown for the related block dynamics by Bhatnagar et al.\ \cite{bhatnagar2011reconstruction} but the presence of frozen local regions means their proof does not apply to the single site dynamics. We overcome this restriction establishing the following result.
\begin{thm}
	\label{thm:thm2}For fixed $\beta<1-\ln2$ and $k>k(\beta)$ large enough when $d\le k[\log k+\log\log k+\beta]$, the mixing time of Glauber dynamics of the $k$-coloring model on $n$-vertex $d$-ary tree is $O(n\log n)$.
\end{thm}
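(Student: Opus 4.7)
The plan is to apply the general framework for the modified block dynamics developed earlier in the paper to the $k$-coloring model on the $d$-ary tree in the stated regime. That framework reduces the desired $O(n\log n)$ mixing-time bound to verifying two spatial hypotheses on the Gibbs measure $\mu$: non-reconstruction, and a ``root is not typically locally frozen'' condition controlling how often the root of a large subtree becomes combinatorially forced under a random boundary drawn from $\mu$.

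The first step is to invoke the known non-reconstruction result for $k$-colorings on the $d$-ary tree. In the regime $d \le k[\log k + \log\log k + \beta]$ with $\beta < 1 - \ln 2$, non-reconstruction has been established in prior work, and moreover the correlation between the root spin and a level-$\ell$ boundary decays with $\ell$. This supplies exactly the spatial mixing input required by the framework.

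The second step, which is the technical heart of the argument, is verifying the non-freezing hypothesis. I would study the distribution of the root's list of \emph{available} colors --- those colors consistent with the coloring on the boundary of a ball of radius $\ell$ around the root --- via a broadcast/BP-type recursion on the tree. Combining first- and second-moment estimates with the observation that $d \le k(\log k + \log\log k) + O(k)$ sits just below the local freezing threshold for colorings, one shows that with probability bounded away from zero a positive fraction of the $k$ colors remains available at the root. It then follows that on average the subtree rooted at a typical vertex is not locally frozen, which is exactly the hypothesis the framework requires.

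The main obstacle I anticipate is quantifying the freezing probability with enough uniformity to match what the framework demands: one needs a tail bound on nearly-frozen configurations that is uniform in the depth of the subtree considered and that survives the recursion through $\ell$ levels, so that the expected log-Sobolev penalty paid on rare frozen events remains bounded. Once both hypotheses are in hand, the modified-block-dynamics theorem yields an $O(1)$ lower bound on the log-Sobolev constant of the single-site Glauber dynamics, from which the claimed $O(n \log n)$ mixing bound follows by the standard translation from log-Sobolev inequalities to total variation mixing.
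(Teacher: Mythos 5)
Your high-level reduction matches the paper: invoke the general theorem (Theorem \ref{thm:thm1}) with the known non-reconstruction result of \cite{sly2009reconstruction} (the Kesten--Stigum hypothesis $d\lambda^2<1$ is trivial here since $\lambda=-1/(k-1)$), so that everything comes down to the ``not locally frozen'' hypothesis. The genuine gap is in how you propose to verify that hypothesis. First, the target is wrong in strength: the connectivity condition requires $p_l^{\mathrm{free}}\to 1$, i.e.\ the probability that the root is \emph{not} free must tend to zero (and the framework then bootstraps this, via Lemma \ref{lem:cnctcond}, to a doubly exponential tail, which is what makes the union bound over the $d^{l/2}$ vertices in a half-block in the proof of Lemma \ref{lem:newcct} converge). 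Showing, as you propose, that ``with probability bounded away from zero a positive fraction of colors remains available'' verifies nothing: if the root were frozen with constant probability the comparison between the component dynamics and the fixed-boundary measure would fail outright. Second, the relevant notion is not the static list of colors consistent with the level-$\ell$ boundary, but the dynamical one: a vertex is \emph{free} when it can be moved to every one of the $k$ colors by a sequence of single-site Glauber updates inside its subtree with the leaves held fixed. These notions do not coincide, and the whole difficulty of the hard-constraint setting lives in this difference.

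Concretely, the paper verifies the condition by a recursive analysis of this dynamical quantity: it classifies vertices as rigid/type~2/type~3 according to the number of reachable colors, introduces the derived good/bad dichotomy relative to the parent's color, shows (Lemma \ref{lem:gdtofree}) that a vertex with all children good is free, and then proves $p_l^{b}\le \exp(-(k/2)^{l-l_0})$ in two phases: a Poisson-coupling recursion in the style of \cite{sly2009reconstruction} drives $p_l^{b}$ below $1/(ed)$ in constantly many levels (this is where the hypothesis $d\le k[\log k+\log\log k+\beta]$, $\beta<1$, enters), and then the elementary bound $p_{l}^{b}\le (d\,p_{l-1}^{b})^{k-2}$ (a bad vertex needs at least $k-2$ bad children) bootstraps to doubly exponential decay. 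A first/second-moment computation on available color lists, as sketched in your proposal, does not produce this recursion, does not track the reachability notion, and does not yield the ``tends to one'' (let alone doubly exponential) estimate the framework demands; this step needs to be reworked along the lines above.
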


Our bound corresponds to the non-reconstruction region established in~\cite{sly2009reconstruction}. In the same paper it was shown that the $k$-coloring model is reconstructible for $\beta>1$. In a forthcoming work we will give an improved upper bound on the reconstruction threshold from which is will follow that $O(n\log n)$ mixing holds for the full non-reconstruction regime.

\subsection{Previous work}

Many studies have shown that the mixing time of Glauber dynamics, both for $k$-coloring model and general spin systems, are related to spatial properties of the Gibbs measure. In the case of $d$-ary trees, two properties of primary interest are uniqueness of infinite volume Gibbs measure and reconstruction, which corresponds to the extremality of infinite Gibbs measure induced by free boundary conditions. It has been shown in quite general settings that the Glauber dynamics exhibits rapid mixing, and in particular an $O(n\log n)$ mixing time, when the system admits a unique infinite volume Gibbs measure by Martinelli, Sinclair and Weitz~\cite{martinelli2004glauber,martinelli2007fast} and most generally in Weitz's thesis~\cite{weitz2004mixing}. Beyond the uniqueness threshold, the Glauber dynamics for $k$-coloring may not even be ergodic under some boundary conditions, but it is natural to consider the behavior under free boundary conditions.

There have been intensive studies on the mixing time of coloring model from both theoretical computer science and statistical physics. For $k$-coloring on general graphs with $n$ vertices and maximal degree $d$, the Glauber dynamics is irreducible when $k\ge d+2$. It is tempting to conjecture that the chain exhibits rapid mixing whenever $k\ge d+2$. So far the best result on general graphs is given by Vigoda in \cite{vigoda1999improved}, where he showed $O(n^{2}\log n)$ mixing time when $k\ge\frac{11}{6}d$. A series of improvements on the constant $\frac{11}{6}$ for rapid mixing have been made with extra conditions on the degree or  girth, (see the survey~\cite{frieze2007survey} for more results toward this direction).

For the $k$-coloring model on $d$-ary trees, Jonasson \cite{jonasson2002uniqueness} established there is a unique infinite Gibbs measure for $k\ge d+2$. Martinelli et al.\ \cite{martinelli2004glauber,martinelli2007fast, weitz2004mixing} studied the mixing rates of several spin systems on $d$-ary trees and in particular showed $O(n\log n)$ mixing time for coloring models under any boundary conditions for $k\ge d+3$. Their method made use of the block dynamics and employed the idea of the decay of correlation between the root and the leaves. As $k$ drops below the uniqueness threshold of $d+2$, we can no longer compare the block dynamics to the standard single site Glauber dynamics as the chain inside a block will not always be connected under worst-case boundary conditions. Notwithstanding this, the desired speed of decay of correlation still holds suggesting that rapid mixing under free boundary conditions might still be true beyond the non-uniqueness threshold.

This decay of correlation between root and leaves is closely related to another problem called reconstruction. Roughly speaking, a model on tree is reconstructible if, given the leaves of a randomly chosen configuration, one's best guess for the root is ``strictly better'' than the stationary distribution, as the number of level goes to infinity. In other words, reconstruction corresponds to the non-vanishing influence of average-case boundary conditions to the root. For coloring models, Mossel and Peres~\cite{MosPer:03} established reconstruction when $k\geq (1+o(1))d/\log d$ by considering the point at which the model freezes, that is the boundary condition exactly determines the root.

In the other direction, Bhatnagar et al.\ \cite{bhatnagar2011reconstruction} and Sly~\cite{sly2009reconstruction} independently proved that the model has non-reconstructible for $k\leq (1+o(1))d/\log d$. Using this result~\cite{bhatnagar2011reconstruction} showed that the block dynamics for $k$-coloring model mixes in $O(n\log n)$ time for $k\ge(1+\epsilon)d/\log d$ and large $k>k(\epsilon)$ using non-reconstruction and following the methods of \cite{martinelli2004glauber}.

For more results below the non-uniqueness threshold, Berger et al. \cite{berger2005glauber} showed polynomial mixing time for general models on trees whenever the dynamics is ergodic, which in the case of coloring corresponds to $k\ge 3$ and $d\ge 2$. For the coloring model Goldberg et al.\ \cite{goldberg2010mixing} proved an upper bound of $n^{O(d/\log d)}$ for the complete tree for with branching factor $d$ and Lucier et al.\ \cite{lucier2009glauber} showed the mixing time is $n^{O(1+d/k\log d)}$ for all $d$ and $k\ge 3$. Recently Tetali et al.~\cite{tetali2010phase} proved the mixing time undergoes a phase transition at the reconstruction threshold $k=(1+o(1))d/\log d$, where their upper bound for $k\ge(1+o(1))d/\log d$ is $O(n^{1+o_{k}(1)})$. They also showed that the mixing time is $\Omega(n^{d/k\log d-o_{k}(1)})$ for $k\le(1-o(1))d/\log d$, which implies rapid mixing does not hold in reconstruction region.

The main result of this paper is to reduce the mixing time in the non-reconstruction region from the polynomial time bound of $n^{1+o(1)}$ to a sharp bound of $O(n\log n)$. Our proof is a modification of the techniques used in \cite{martinelli2004glauber}. The main technical difficulty of directly applying their method is their (non-obvious) restriction of ``admissive'' and ``well-connected'', i.e. for any boundary conditions on arbitrary subset the set of proper configurations is non-empty and connected via single site update. These conditions may not be satisfied in models with hard constraints, in particular $k$-coloring model. Intuitively, below the uniqueness threshold, there will be vertices whose states are ``frozen'' by their neighbors. While block dynamics can update ``frozen'' vertices together with their neighbors in a single move, extra efforts are needed for single site dynamics to pass around the barrier and change them, leading to the failure of comparing two dynamics. We will look at a new variant of block dynamics that focuses on the connected component induced by the single site chain on the state space of usual block dynamics. By carefully examining the portion of ``frozen'' vertices and their influence on nearby sites, we will show rapid mixing of our new version of block dynamics which in turn implies the final result.

\subsection{General spin system}

Phase transitions and decay of correlation of spin systems play a key role in the mixing time of the Glauber dynamics. On trees the reconstruction threshold rather than the uniqueness threshold plays the key role. Berger et al.\ \cite{berger2005glauber} showed that for general spin systems, $O(n)$ relaxation time under free-boundary condition implies non-reconstruction. In the other direction, Weitz conjectured~\cite{weitz2004mixing} that for any $k$-state spin system or on $d$-ary trees, the system mixes in $O(n\log n)$ time whenever it admits a unique Gibbs measure and the Glauber dynamics is connected under given boundary condition. He proved for the case $k=2$ and for ferromagnetic Potts model and coloring as two special cases of $k>2$. His proof also gives spatial mixing conditions which apply for a wide range of models.

Our result for $k$-coloring can be extended to general $k$-state spin systems and gives a sufficient condition, for spin systems to exhibit rapid mixing in the non-reconstruction region. Throughout the paper, we will refer to a spin system by its probability kernel $M$, defined by $M(c,c')=\mu(\sigma_{y}=c'|\sigma_{x}=c),(x,y)\in E$. The sufficient condition, which we will call the {\bf connectivity condition} $\mathcal{C}$ is specified in Section~\ref{sub:cntcond} and deals with hard constraints. It roughly speaking asks that the probability that the root can ``change freely'' to all $k$ states given a random boundary condition tends to $1$. In particular, it is automatically satisfied by all models without hard constraints or with a permissive state -- one that can occur next to any other (e.g. the hardcore model).
\begin{thm}
	\label{thm:thm1} Let $M$ be a $k$-state system on the $n$-vertex $d$-ary tree $T$ with second eigenvalue $\lambda$. If $M$ satisfies the connectivity condition $\mathcal{C}$, is non-reconstructible on $T$, and $d\lambda^{2}<1$ then the mixing time of Glauber dynamics on $T$ under free boundary condition is $O(n\log n)$.
\end{thm}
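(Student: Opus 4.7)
The plan is to adapt the Martinelli--Sinclair--Weitz framework, whose failure for hard-constraint models is the comparison between block updates and single-site updates: the single-site chain restricted to a block can be reducible, so a legal block move may be unreachable by single-site moves. To circumvent this I would define a modified block dynamics $P^{*}$ that, at each step, resamples only within the connected component of the current configuration under single-site updates inside the chosen block. By construction $P^{*}$ has the same Dirichlet form (up to a constant factor) as the single-site chain restricted to that block, so the comparison step to the standard Glauber dynamics becomes automatic, and the real work is to establish a log-Sobolev (or modified log-Sobolev) constant of order $\Omega(1)$ for $P^{*}$.

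That bound I would prove by induction on the depth of the tree, mirroring the recursive entropy decomposition of \cite{martinelli2007fast}. Writing $\mu$ via the product structure of the $d$ subtrees below the root, the induction step takes the form
\[
\mathrm{Ent}_{\mu}(f) \le C \sum_{i=1}^{d}\mathbb{E}_{\mu}\bigl[\mathrm{Ent}_{\mu_{T_{i}}}(f)\bigr] + R_{\mathrm{root}}(f),
\]
where $T_{i}$ is the $i$-th principal subtree and $R_{\mathrm{root}}$ collects the contribution from updating the root and its immediate neighborhood. The subtree terms are handled by the inductive hypothesis applied to each $T_{i}$, so to close the recursion with constant $C$ independent of depth one needs the root term to be contractive on average. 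This is precisely where non-reconstruction enters: the root spin conditioned on a typical deep boundary is close to the stationary single-site law, and the assumption $d\lambda^{2}<1$ is the Kesten--Stigum condition, which upgrades non-reconstruction to $L^{2}$ correlation decay and gives the geometric rate needed for the recursion to close uniformly in $n$.

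The chief obstacle is that $P^{*}$ and the unrestricted block dynamics differ on configurations whose reachable connected component inside a block is small; on such ``stuck'' configurations $P^{*}$ barely moves, and the naive bound on $R_{\mathrm{root}}$ blows up. The connectivity condition $\mathcal{C}$ is exactly what is needed to control this pathology: it guarantees that, under an average-case boundary sampled from $\mu$, the probability that the root can still reach each of the $k$ spin values via local single-site moves tends to $1$ with depth. I would therefore split $R_{\mathrm{root}}$ over a ``good'' event, where $\mathcal{C}$ holds and the contraction matches the unrestricted block dynamics, and a ``bad'' event whose $\mu$-measure is small enough (quantified using $\mathcal{C}$ and the same Kesten--Stigum decay) to be absorbed as a lower-order error in the recursion.

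Combining these ingredients gives a log-Sobolev constant $\Omega(1)$ for $P^{*}$, and the built-in comparison of Dirichlet forms between $P^{*}$ and the single-site Glauber chain yields a log-Sobolev constant of order $1/n$ for the latter, which by the standard entropy argument translates to the claimed $O(n\log n)$ mixing time. The most delicate step I anticipate is not the recursion itself but the calibration of the ``good''/``bad'' split: one must show that the loss incurred on stuck boundaries (where $P^{*}$ is genuinely weaker than a full block resample) is strictly smaller than the contraction gained on typical boundaries, and this is where a careful use of $\mathcal{C}$ together with $L^{2}$ non-reconstruction is essential.
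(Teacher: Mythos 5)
Your overall architecture matches the paper's: a component (connected-component-restricted) block dynamics whose comparison with the single-site chain is routine, an entropy recursion over subtrees in the style of Martinelli--Sinclair--Weitz, non-reconstruction plus $d\lambda^{2}<1$ to control the root term, and a good/bad split governed by the connectivity condition $\mathcal{C}$. The gap is in what you feed into the recursion. You assert that non-reconstruction ``upgraded to $L^{2}$ correlation decay'' by the Kesten--Stigum condition gives ``the geometric rate needed for the recursion to close.'' It does not. The entropy-mixing step (the analogue of Lemma 5.4 of \cite{martinelli2004glauber}) requires a \emph{super-exponential concentration} statement: for block depth $l$ one must show that the conditional law of the root given a typical boundary deviates from its unconditional law by more than $\delta'\asymp l^{-2}$ with probability at most $e^{-2/\delta'}\asymp e^{-cl^{2}}$. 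Geometric $L^{2}$ decay of the root--boundary correlation, via Chebyshev or Markov, only yields a tail of order $\mathrm{poly}(l)\,(d\lambda^{2})^{l}=e^{-\Theta(l)}$, which is far too large, so your recursion does not close as written. The paper devotes Section 3 (Theorem \ref{lem:originalcct}) to exactly this point: it bootstraps the exponential decay of $\mathbb{E}\,R_{x,l}$ (obtained from Janson--Mossel robust reconstruction under $d\lambda^{2}<1$) through a recursive contraction of the likelihood ratios (Lemma \ref{lem:Rcontraction}) into a doubly exponential tail bound. Some such amplification mechanism is an essential missing ingredient in your plan, not a routine calibration.

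A secondary weakness is the good/bad split itself. You need two facts that your sketch does not supply: (i) a structural reason why, on the good event, conditioning on the connected component of the block is harmless --- in the paper this is the observation that if every vertex in an intermediate level is ``free'' then any two configurations of the top half with the same mid-level boundary are connected by single-site moves (Claim \ref{lem:freetoconnect}), so the component conditioning factors out and one recovers the plain conditional measure (Lemma \ref{lem:condindep}); and (ii) a quantitative bound on the bad event strong enough to beat the $1/l^{2}$ threshold, i.e.\ much better than the bare statement $p_{l}^{\mathrm{free}}\to1$ from $\mathcal{C}$. The paper obtains this by a self-bootstrap of the freeness probability to a doubly exponential rate (Lemma \ref{lem:cnctcond}), using the compatibility part of $\mathcal{C}$; it has nothing to do with Kesten--Stigum decay, to which you attribute it. Without (i) and (ii) the ``stuck boundary'' error term cannot be absorbed into the recursion.
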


The assumption that $d\lambda^{2}<1$ corresponds to the Kesten-Stigum bound in the reconstruction problem. When $d\lambda^{2}>1$, the system is always reconstructible by counting the number of leaves with each state (see, the survey~\cite{mossel2004survey}). Hence non-reconstruction implies $d\lambda^{2}\le 1$. Our assumption that $d\lambda^{2}$ is strictly less than 1 is essential to our proof and indeed must be so since Ding, Lubetzky and Peres~\cite{DLP:10} showed that the mixing time for the Ising model is at least of order $n\log^3 n$ when $d\lambda^2 = 1$.

\section{Preliminaries}

\subsection{Definition of model}

{\bf Spin systems: } We will use $[k]=\{1,\dots,k\}$ to denote the state space of a spin and $T=(V,E)$ to denote the $d$-ary tree (i.e. every vertex have $d$ offsprings) with root $\rho$ and $|T|=n$ vertices. Throughout the paper, we will denote the $l$-th level of the tree $T$ by $L_{l}$, with $L_{0}=\{\rho\}$. We will also use $T_{x}$ to represent the subtree rooted at $x\in T$ and let $B_{x,l}$, $L_{x,l}$ denote the first $l$ levels and the $l$-th level of $T_{x}$ respectively.

A configuration on $T$ is an assignment of spins to vertices $\sigma \in[k]^{V}$. For a $k$-state spin system (with potential $U$ and $W$), the probability of seeing $\sigma\in[k]^{V}$ is given by the (free-boundary) Gibbs measure
\[ \mu(\sigma)=\frac{1}{Z}\exp\bigg[-(\sum_{(x,y)\in E}U(\sigma_{x},\sigma_{y})+\sum_{x\in V}W(\sigma_{x}))\bigg], \]
where $U$ is a symmetric function from $[k]\times[k]\to\mathbb{R}\cup\{\infty\}$, $W$ is a function from $[k]\to\mathbb{R}\cup\{\infty\}$ and $Z$ is the partition function independent of $\sigma$ such that $\sum_{\sigma\in[k]^{V}}\mu(\sigma)=1$. If for some states $i,j\in[k]$, $U(i,j)=\infty$, we say $(i,j)$ is a hard constraint, otherwise we say $i$ and $j$ are \emph{compatible}. We will focus on the set of proper configurations on $T$, denoted by $\Omega_{T}=\{\sigma,\mu(\sigma)>0\}$. For example in the coloring model, $W(c)=0,U(c,c')=\infty\cdot1(c=c')$ and the Gibbs measure is the uniform distribution over all proper colorings. We will write $\sigma_{A}$ for the restriction of $\sigma$ to the subset $A$ and use superscript to denote conditioning on a boundary condition. $\Omega_{A}^{\eta}=\{\sigma,\sigma\in\Omega_{T},\sigma_{T\backslash A}=\eta_{T\backslash A}\}$ is the set of configurations compatible with boundary condition $\eta$ and the conditional law is $\mu_{A}^{\eta}(\sigma)=\mu(\sigma\mid\sigma\in\Omega_{A}^{\eta}).$

The principal example for this paper is the proper graph coloring. A proper $k$-coloring of graph $G=(V,E)$ is an assignment $\sigma:V\to[k]=\{1,2,\dots,k\}$ such that for all $(x,y)\in E$, $\sigma_{x}\neq\sigma_{y}$. In the statistical physics literature this corresponds to zero-temperature anti-ferromagnetic Potts model.

For the reconstruction problem, it is easy to work with the Markov chain construction of Gibbs measure on trees, which can be taken as a special case of the broadcast model on trees. We think of the process where information is sent on tree $T$ from the root $\rho$ downwards and each edge acts as a noisy channel. For each input $c_{1}\in[k]$, the output of $c_{2}$ is chosen from probability kernel $M(c_{1},c_{2})$. If the input at root $\rho$ follows the stationary distribution of $M$, denoted by $\pi$, the law of a random configuration on $T$ is given by
\[ \mu(\sigma)=\pi_{\sigma_{\rho}}\prod_{(x,y)\in E}M(\sigma_{x},\sigma_{y}). \]
It is easy to check the following one-to-one correspondence between potential $U,W$ and reversible probability kernel $M$,
\[ M(c_{1},c_{2})=\frac{\exp[-(U(c_{1},c_{2})+W(c_{2}))]}{\sum_{c'\in[k]}\exp[-(U(c_{1},c')+W(c'))]},U(c_{1},c_{2}) =\ln\left(\frac{M(c_{1},c_{2})}{\pi_{c_{2}}}\right),W(c)=-\ln\pi_{c}. \]
We will henceforth denote a spin system by its probability kernel $M$, we only deal with $M$ that are ergodic and reversible.

\subsubsection{Glauber dynamics and mixing time}

The Glauber dynamics for a $k$-state spin system $M$ is a Markov chain $X_{t}$ on state space $\Omega_{T}$. A step of the Markov chain from $X_{t}$ to $X_{t+1}$ is defined as follows:
\begin{enumerate}
	\item Pick a vertex $x$ uniformly randomly from $T$;
	\item Pick a state $c\in[k]$ according to the conditional distribution of the spin value of $x$ given the rest of configuration, i.e. state $c$ is picked with probability $\mu_{\{x\}}^{\sigma}(c)=\mu(\sigma_{x}'=c\mid\sigma'_{y}=\sigma_{y},\forall y\neq x)$;
	\item Set $X_{t+1}(x)=c$ and $X_{t+1}(y)=X_{t}(y),\forall y\neq x.$
\end{enumerate}
We denote the transition matrix by $P$. In the case of coloring, the second step corresponds to a picking a uniformly chosen color that does not appear in the neighbor of $x$.

As we will show in Lemma \ref{lem:freebc}, under the connectivity condition $\mathcal{C}$, the Glauber dynamics with free boundary conditions is ergodic, reversible and hence converges to the Gibbs measure $\mu$ (this is easy to check for models with no hard constraints). The {\bf mixing time} is defined as
\[ t_{\mathrm{mix}}=\max_{\sigma\in\Omega_{T}}\min\{t:d_{\mathrm{TV}}(P^{t}(\sigma,\cdot),\mu)\le\frac{1}{2e}\}, \]
where $P$ is the probability kernel of $X_{t}$ and $d_{\mathrm{TV}}(\pi,\mu)=\frac{1}{2}\sum_{\sigma}\left|\pi(\sigma)-\mu(\sigma)\right|$ is the total variance distance. To bound the mixing time we will make use of the {\bf log-Sobolev constant}. For non-negative function $f:\Omega_{T}\to\mathbb{R}$, let $\mu(f)=\sum_{\sigma}\mu(\sigma)f(\sigma)$ be the expectation of $f$ and the $\textrm{Ent}f=\mu(f\log f)-\mu(f)\log\mu(f)$ be the entropy. The Dirichlet form of $f$ is defined by
\[ \mathcal{D}(f)=\frac{1}{2}\sum_{\sigma,\sigma'}\mu(\sigma)P(\sigma,\sigma')(f(\sigma)-f(\sigma'))^{2}. \]
And the log-Sobolev constant is defined by $\gamma=\inf_{f\ge0}\frac{\mathcal{D}(\sqrt{f})}{\textrm{Ent}(f)}$. Applying results in functional analysis to Glauber dynamics yields the following bound on mixing time regarding log-Sobolev constant (see e.g. Theorem 2.2.5 of \cite{saloff1997lectures}):
\begin{thm*}
	For $k$-state system $M$ on $n$-vertex $d$-ary tree $T$, there exists a constant $C>0$ such that $t_{\mathrm{mix}}\le\frac{1}{\gamma}\cdot Cn\log n.$
\end{thm*}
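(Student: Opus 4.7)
The stated bound is a direct application of the functional-analytic log-Sobolev to mixing-time conversion (Saloff-Coste, Theorem 2.2.5 of the cited reference), combined with a crude tree-specific lower bound on $\mu_{\min}$. There is only one model-specific input, and everything else is invoked as a black box.

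First, I would estimate $\log(1/\mu_{\min})$. The tree factorisation makes this immediate: under free boundary conditions $\mu(\sigma) = \pi_{\sigma_\rho} \prod_{(x,y) \in E} M(\sigma_x, \sigma_y)$. Let $m_* = \min\{M(c,c') : M(c,c') > 0\}$ and $p_* = \min\{\pi_c : \pi_c > 0\}$; both are strictly positive constants depending only on the fixed kernel $M$. For any $\sigma \in \Omega_T$ one has $\mu(\sigma) \ge p_* m_*^{\,n-1}$, so $\log(1/\mu_{\min}) \le C_1 n$ for some $C_1 = C_1(M)$. This is essentially the only place the $d$-ary tree structure and the finiteness of the state space $[k]$ are used.

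Next I would verify the hypotheses of Theorem 2.2.5 and apply it. The chain $P$ is reversible with respect to $\mu$ by the very definition of Glauber dynamics (the update at site $x$ uses the correct conditional law $\mu_{\{x\}}^{\sigma}$); irreducibility of $P$ on $\Omega_T$ follows from Lemma \ref{lem:freebc}, which is where the connectivity condition $\mathcal{C}$ enters when there are hard constraints (for models without hard constraints irreducibility is automatic). The quoted theorem then produces a bound of the form $t_{\mathrm{mix}} \le \frac{1}{\gamma}\bigl(C_2 \log\log(1/\mu_{\min}) + C_3\bigr)$, where $\gamma$ is the log-Sobolev constant as defined in the excerpt. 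Substituting $\log(1/\mu_{\min}) \le C_1 n$ gives $t_{\mathrm{mix}} \le C\,\log n/\gamma$, which is in particular bounded by $\frac{1}{\gamma}\cdot C n\log n$ as claimed.

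\textbf{Main difficulty.} There is no real technical difficulty in this step: the heavy functional-analytic machinery (Gross hypercontractivity, entropy decay along the continuous-time semigroup $e^{-t(I-P)}$, Pinsker's inequality) is packaged inside Theorem 2.2.5 and invoked as a black box. The one point that genuinely uses this paper's setup is checking that $P$ is irreducible on $\Omega_T$ under free boundary conditions. In the presence of hard constraints this is exactly the role of the connectivity condition $\mathcal{C}$ and the forthcoming Lemma \ref{lem:freebc}; without it $P$ might fail to be ergodic and the statement would be vacuous. The tree-factorisation bound $\log(1/\mu_{\min}) = O(n)$ is trivial and specific to tree Gibbs measures over bounded finite state spaces, and is what ensures the conversion $\log\log(1/\mu_{\min}) \le \log n + O(1)$ used in the final step.
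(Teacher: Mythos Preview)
The paper does not give a proof of this theorem at all: it simply states the bound and cites Theorem~2.2.5 of Saloff-Coste as a black box. Your proposal is exactly the intended reading of that citation---verify ergodicity (via Lemma~\ref{lem:freebc}), bound $\log(1/\mu_{\min})=O(n)$ from the product form of $\mu$, and plug into the standard log-Sobolev/hypercontractivity mixing estimate---so your approach coincides with the paper's; indeed you correctly note that the raw output $t_{\mathrm{mix}}\le C\gamma^{-1}\log n$ is already stronger than the $Cn\log n/\gamma$ the paper records.
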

Therefore to show rapid mixing is enough to show that $\gamma$ is uniformly bounded away from zero as $n$ tends to infinity.

\subsubsection{Uniqueness and reconstruction}

Two key notions of spatial decay of correlation for spin systems on trees are the uniqueness and reconstruction thresholds. Recalling that $L_{l}$ is the set of the vertices at level $l$ in $T$, we have the following definition
\begin{defn*}
	[Reconstruction] A $k$-state system $M$ is reconstructible on a tree $T$ if for some $c,c'\in[k]$
	\[ \limsup_{l\to\infty}d_{TV}(\mu(\sigma_{L_{l}}=\cdot\mid\sigma_{\rho}=c),\mu(\sigma_{L_{l}}=\cdot\mid\sigma_{\rho}=c'))>0. \]
	Otherwise we say the system has non-reconstruction on $T$.
\end{defn*}
Non-reconstruction is equivalent to the extermality of infinite volume Gibbs measure under free boundary conditions. More equivalent definition and an extensive literature are given in the survey~\cite{mossel2004survey}. A strictly stronger condition is the uniqueness property.
\begin{defn*}
	[Uniqueness] We say a $k$-state system $M$ has uniqueness on a tree $T$ if
	\[ \limsup_{l\to\infty}\sup_{\eta,\eta'\in\Omega_{L_{l}}}d_{TV}(\mu(\sigma_{\rho}=\cdot\mid\sigma_{L_{l}}=\eta),\mu(\sigma_{\rho}=\cdot\mid\sigma_{L_{l}}=\eta'))>0, \]
	where $\Omega_{L_{l}}$ is the set of configurations on level $l$.
\end{defn*}

\subsection{Connectivity condition\label{sub:cntcond}}

In this section we specify the connectivity condition $\mathcal{C}$. First we will define the notion for a vertex to be free. Let $T$ be a tree of $l$ levels. For configuration $\sigma\in\Omega_{T}$ with $\sigma_{\rho}=c$, $\sigma_{L_{l}}=\eta$, we say the root can change (from $c$) to state $c'$ {\bf in one step} if and only if there exist a path $\sigma=\sigma^{0},\sigma^{1},\dots,\sigma^{n}\in\Omega_{T}$ such that for each $i$, $\sigma^i,\sigma^{i+1}$ differ at exactly one vertex, $\sigma_{L_{l}}^i\equiv\eta$ for $\sigma_{\rho}^i=c,i=0,1,\dots.n-1,\sigma_{\rho}^{n}=c'$. Put another way, the path is a valid trajectory of the Glauber dynamics with fixed leaves which changes the state of $\rho$ only once in the final step. For $x\in T$, we say $x$ is {\bf free} (in $\sigma$) if, considered as root of $T_{x}$, $x$ can changed to all the other $(k-1)$-states in one step. Denote the probability that the root of an $l$-level tree is free as $p_{l}^{\mathrm{free}}=\mu(\sigma:\rho\text{ is free in }\sigma)$.
\begin{defn*}
	We say that the $k$-state system $M$ on the $d$-ary tree satisfies the connectivity condition $\mathcal{C}$ if $M$ is ergodic, reversible and
	\begin{enumerate}
		\item For all $ c_{1},c_{2},c_{3}\in[k]$, there exists $c\in[k]$ such that $c$ is compatible with $c_{1},c_{2},c_{3}$.
		\item The probability of being free tends to 1 as $l$ tends to infinity, i.e.~$\lim_{l\to\infty}p_{l}^{\mathrm{free}}=1.$
	\end{enumerate}
\end{defn*}
The first condition is used in the proof of Claim \ref{lem:freetoconnect}, see the discussion afterward for the necessity of this condition.

We first show that under connectivity condition $\mathcal{C},$ the Glauber dynamic is irreducible under free boundary conditions and ergodicity follows from that. For the sake of recursive analysis on subtrees later, we want to consider the case where the parent of the root is fixed to be some state. For state $c\in[k]$ , let $\Omega_{T}^{c}$ denote the set of configurations with the parent of root $\rho$ being state $c$ and let $\mu_{T}^{c}$ be the corresponding conditional Gibbs measure.
\begin{lem}
	\label{lem:freebc}For general $k$-state system $M$ on $d$-ary tree $T$. If $M$ is ergodic and reversible, then $\Omega_{T}^{c}$ is irreducible under Glauber dynamics.
\end{lem}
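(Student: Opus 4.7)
The plan is to prove the lemma by induction on the depth of $T$, using the parent spin $c$ itself as a common pivot through which the root spin can be moved from any value in $\Omega_T^c$ to any other. For the base case in which $T$ consists only of the root $\rho$, the set $\Omega_T^c$ equals $\{c' \in [k] : M(c,c') > 0\}$, and a single Glauber update at $\rho$ samples its new value from a conditional distribution with full support on $\Omega_T^c$, so irreducibility is immediate.

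For the inductive step, let $\rho$ have children $u_1, \dots, u_d$, let $T_j$ denote the subtree rooted at $u_j$, and assume as induction hypothesis that $\Omega_{T_j}^{c'}$ is irreducible under the Glauber dynamics on $T_j$ for every $c' \in [k]$ (these sets are non-empty by ergodicity of $M$). Given two configurations $\sigma, \sigma' \in \Omega_T^c$ with $\sigma_\rho = c_1$ and $\sigma'_\rho = c_2$, I would connect them through an intermediate configuration in which every child of $\rho$ takes the parent spin $c$. The key observation is that both $c_1$ and $c_2$ are compatible with $c$ (forced by the edge between $\rho$ and its parent), while ergodicity of $M$ guarantees that $T_j$ admits a proper completion below any fixed value of $u_j$; hence configurations with $u_j = c$ lie simultaneously in $\Omega_{T_j}^{c_1}$ and in $\Omega_{T_j}^{c_2}$.

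With this pivot in place, the remainder is essentially mechanical. Keeping $\rho$ at $c_1$, I would first invoke the induction hypothesis inside each $T_j$ with boundary spin $c_1$ to rearrange $\sigma|_{T_j}$ into a configuration with $u_j = c$, using only single-site updates internal to $T_j$ (these are legitimate moves of the Glauber chain on $T$ since the conditional distribution at any $v\in T_j$ only looks at neighbors of $v$, which equal $c_1$ at $\rho$ in either frame). Once every child of $\rho$ equals $c$, one Glauber update at $\rho$ moves its value from $c_1$ directly to $c_2$ with positive probability, since $c$ is compatible with both endpoints. A second round of the induction hypothesis, now with boundary $c_2$, converts each $T_j$ from the pivot configuration into $\sigma'|_{T_j}$, yielding $\sigma'$.

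The one substantive point, which I would identify as the main obstacle, is the choice of pivot spin at the children. A priori there may be no state of $[k]$ compatible with both $c_1$ and $c_2$ that could serve as a common value at the $u_j$ while the root is updated; this kind of hard-constraint obstruction is exactly what forces the paper to introduce condition $\mathcal{C}$ later. Using the parent spin $c$ itself as the pivot removes the difficulty here, because compatibility of $c$ with both $c_1$ and $c_2$ is built into the hypothesis $\sigma,\sigma'\in\Omega_T^c$; this is what allows the lemma to hold for arbitrary ergodic reversible $M$ with no further assumptions.
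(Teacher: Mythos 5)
Your proposal is correct and follows essentially the same route as the paper: induction on the depth of the tree, using the parent spin $c$ as the common pivot value at the root's children so that the root can be resampled in a single Glauber step, then restoring each subtree via the inductive hypothesis. The additional points you verify (existence of completions below a child set to $c$, and that moves internal to $T_j$ are valid moves of the chain on $T$) are implicit in the paper's argument, so there is no substantive difference.
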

\begin{proof}
	Reversibility follows from the detailed balance equations. 
	We prove irreducibility by induction on the number of levels $l$ in $T$. For $l=0$, it is trivially true. We assume that the Glauber dynamics is connected for $(l-1)$-level tree and consider $l$-level tree $T$ and configurations $\sigma,\sigma'\in\Omega_{T}^{c}$. To establish a path of valid moves of the dynamics from $\sigma$ to $\sigma'$, one can first change every vertex $x\in L_{1}$ to state $c$ by a sequence of moves in the tree $T_x$ using our inductive assumption. We may then change the spin of the root from $\sigma_\rho$ to $\sigma_\rho'$. Finally we may change the configuration of every subtree $T_{x}$ to $\sigma'_{T_{x}}$ using the inductive assumption bringing us to the configuration $\sigma'$.
\end{proof}

\subsection{Component Dynamics }

In this section, we define a new variant of block dynamics on $T$, namely ``the component dynamics'', which updates a block of vertices each step, but only chooses configurations within the connected component of the Glauber dynamic. In this way we can utilize the techniques in \cite{martinelli2004glauber} while bypassing the problem that steps of block dynamic may not be connected in Glauber dynamics when $k\le d+1$. To give a formal definition, for $A\subset T$, we say $\sigma'\sim_{A}\sigma$ if $\sigma'{}_{T\backslash A}=\sigma_{T\backslash A}$ and $\sigma'_{A},\sigma_{A}$ are connected in Glauber dynamics on $A$ with fixed boundary condition $\sigma_{T\backslash A}$. We will omit the $A$ in $\sigma\sim_{A}\sigma'$ when it is clear from the context. Let $\Omega_{A}^{*,\sigma}=\{\sigma'\in\Omega_{A}^{\sigma},\sigma'\sim_{A}\sigma\}$ denote the connected component of $\sigma$ in $\Omega_{A}^{\sigma},$ and let $\mu_{A}^{*,\sigma}(\sigma')=\mu(\sigma'|\Omega_{A}^{*,\sigma})$ be the Gibbs distribution conditioned on both configuration outside $A$ and the connected component within $A$.

For $l\ge 1$, recall $B_{x,l}$ is the block of $l$ levels rooted at $x$ and $L_{x,l}$ be the $l$-th level of $B_{x,l}$. If $x$ is within distance $l$ of the leaves, let $B_{x,l}=T_{x}$. We define a step of the {\bf component dynamics} by the update rule:
\begin{enumerate}
	\item Pick a vertex $x$ uniformly randomly from $T$,
	\item Replace $\sigma$ by $\sigma'$ drawn from conditional distribution $\mu_{B_{x,l}}^{*,\sigma}$.
\end{enumerate}
The dynamics is reversible with respect to the Gibbs distribution. For test function $f:\Omega_{T}\to\mathbb{R}$, let $\mu_{A}^{*,\sigma}(f)=\sum_{\sigma'\in\Omega_{A}^{*,\sigma}}f(\sigma')\mu_{A}^{*,\sigma}(\sigma')$ be the conditional expectation of $f$ on $\Omega_{A}^{*,\sigma}$ and for $f\ge 0$, let
\[ \mathrm{Ent}_{A}^{*,\sigma}(f)=\mathrm{Ent}(f\mid\Omega_{A}^{*,\sigma})=\mu_{A}^{*,\sigma}(f\log f)-\mu_{A}^{*,\sigma}(f)\log\mu_{A}^{*,\sigma}(f) \]
be the conditional entropy of $f$. We write the sum of local entropies of block size $l$ as $\mathcal{E}_{l}^{*}=\sum_{x\in T}\mu_{T}(\mathrm{Ent}_{B_{x,l}}^{*,\sigma}(f))$. With minor modification, the comparison result of block dynamics also works for component dynamics: (see e.g. Prop 3.4 of \cite{martinelli1999lectures}, in the proof substitute $\mathcal{E}_{D}(f,f)$ by $\mathcal{E}_{l}^{*}$ and note $\sum_{\sigma'}\mu_{T}^{\tau}(\sigma')\mu_{B_{x,l}}^{*,\sigma'}(\sigma)=\mu_{T}^{\tau}(\sigma)$.)
\[ \gamma\ge\frac{1}{l}\cdot\inf_{f\ge0}\frac{\mathcal{E}_{l}^{*}}{\mathrm{Ent}(f)}\cdot\min_{\sigma,x}\gamma_{Bx,l}^{*,\sigma} \]
where $\gamma_{Bx,l}^{*,\sigma}$ is the log-Soblev constant of Glauber dynamics on $\Omega_{B_{x,l}}^{*,\sigma}$ with boundary condition on $
\partial B_{x,l}$ given by $\sigma$. From our definition of $\Omega_{B_{x,l}}^{*,\sigma}$, it is easy to see that $\min_{\sigma,x}\gamma_{Bx,l}^{*,\sigma}$ is a constant only depending on the branching number $d$, block size $l$ and $M$ itself and is strictly greater than 0 independent of $T$. Thus to show $O(n\log n)$ mixing time for Glauber dynamics, it is enough to show $\mathcal{E}_{l}^{*}\ge\mbox{const}\times\mathrm{Ent}(f)$ for all $f\ge0$ and some choice of block size $l$ independent of tree size $|T|=n$.

\subsection{Outline of Proof}

A key ingredient in \cite{martinelli2004glauber} is that a certain strong concentration property implies ``entropy mixing'' in space which in turn implies the fast mixing of block dynamics. The following Theorem \ref{thm:Dror5.3} can be seen as the combination of Theorems 3.4 and~5.3 of \cite{martinelli2004glauber} adapted to component dynamics (the notation here is closer to Theorem 5.1 of \cite{bhatnagar2011reconstruction}). For completeness, we include an outline of the proof in Section \ref{sec:ProofofDror} pointing out the differences from the original argument.
\begin{thm}
	\label{thm:Dror5.3}There exist some constant $\alpha>0$ such that for every $\delta>0$ and $l\geq 1$, if for all $x\in T$ that is at least $l$ levels from the leaves and any compatible pair of states $c,c'\in[k]$, $\mu^{c}=\mu_{T_{x}}^{c}$ satisfies
	\begin{equation}
		\Pr\nolimits_{\tau\sim\mu^{c}}\left(\left|\frac{\mu^{c}(\sigma_{x}=c'\mid\sigma\sim_{B_{x,l}}\tau)}{\mu^{c}(\sigma_{x}=c')}-1\right|\ge\frac{(1-\delta)^{2}}{\alpha(l+1-\delta)^{2}}\right)\le e^{-2\alpha(l+1-\delta)^{2}/(1-\delta)^{2}}\label{eq:newcct}
	\end{equation}
	then for every function $f\ge0$, $\mathrm{Ent}(f)\le\frac{2}{\delta}\mathcal{E}_{l}^{*}.$
\end{thm}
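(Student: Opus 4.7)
The plan is to adapt the Martinelli--Sinclair--Weitz entropy-decomposition argument (Sections~3 and~5 of \cite{martinelli2004glauber}) to component dynamics, replacing at every step the ordinary conditional expectations and local entropies with their starred analogues $\mu^{*,\sigma}_{B_{x,l}}$ and $\mathrm{Ent}^{*,\sigma}_{B_{x,l}}$. The overall argument is an induction on the depth of $T$; for the inductive step, split off the root block
\[
\mathrm{Ent}(f) = \mu\bigl(\mathrm{Ent}^{*,\sigma}_{B_{\rho,l}}(f)\bigr) + \mathrm{Ent}(g), \qquad g(\sigma) := \mu^{*,\sigma}_{B_{\rho,l}}(f),
\]
so that $g$ is a function only of $\sigma_{T\setminus B_{\rho,l}}$ and the $\sim_{B_{\rho,l}}$ equivalence class. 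The first term is the contribution of $B_{\rho,l}$ to $\mathcal{E}^*_l$; the goal is to show that $\mathrm{Ent}(g) \leq (1-\delta/2)\mathrm{Ent}(f)$ plus further terms absorbable into $\mathcal{E}^*_l$, from which iterating down the tree produces the geometric series $\sum_{j\geq 0}(1-\delta/2)^j \leq 2/\delta$ as the prefactor.

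To control $\mathrm{Ent}(g)$ I condition on $\sigma_\rho$ and tensorize across the $d$ subtrees $T_{y_1},\ldots,T_{y_d}$ hanging off $\rho$, which are conditionally independent given $\sigma_\rho$:
\[
\mathrm{Ent}(g) = \mu\bigl(\mathrm{Ent}(g\mid\sigma_\rho)\bigr) + \mathrm{Ent}\bigl(\mu(g\mid\sigma_\rho)\bigr) \leq \sum_{i=1}^{d}\mu\bigl(\mathrm{Ent}_{T_{y_i}}(g)\bigr) + \mathrm{Ent}\bigl(\mu(g\mid\sigma_\rho)\bigr).
\]
The first sum is handled by applying the inductive hypothesis to each subtree of depth one smaller, producing the remaining pieces of $\mathcal{E}^*_l$ rooted below $B_{\rho,l}$. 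All the work is concentrated in the ``single-site entropy'' $\mathrm{Ent}(\mu(g\mid\sigma_\rho))$, which must be bounded by a strict fraction of the conditional block entropy $\mu(\mathrm{Ent}^{*,\sigma}_{B_{\rho,l}}(f))$.

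This entropy-mixing bound is the main obstacle and is precisely where the concentration hypothesis (\ref{eq:newcct}) is used. Writing
\[
r_{c\to c'}(\tau) = \frac{\mu^{c}(\sigma_{\rho}=c'\mid\sigma\sim_{B_{\rho,l}}\tau)}{\mu^{c}(\sigma_{\rho}=c')},
\]
the assumption says $|r_{c\to c'}(\tau)-1|\leq (1-\delta)^2/[\alpha(l+1-\delta)^2]$ except on a set of $\mu^c$-mass at most $\exp(-2\alpha(l+1-\delta)^2/(1-\delta)^2)$. Following the strategy of Lemma~5.3 of \cite{martinelli2004glauber}, one expands $x\log x$ around $1$ to recast $\mathrm{Ent}(\mu(g\mid\sigma_\rho))$ as a weighted sum of $(r_{c\to c'}-1)^2$ terms, splits the expectation according to the concentration event, and bounds the bad-event contribution by Cauchy--Schwarz together with the deterministic upper bound $\mathrm{Ent}(\mu(g\mid\sigma_\rho)) \leq \log k \cdot \mu(\mathrm{Ent}^{*,\sigma}_{B_{\rho,l}}(f))$. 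The constant $\alpha$ is chosen so that the tail and good-set contributions combine to leave a prefactor of $(1-\delta/2)$, closing the recursion.

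The one point where the component-dynamics setting demands extra care beyond \cite{martinelli2004glauber} is that both the ratios $r_{c\to c'}$ and the tensorization step must respect the $\sim_{B_{\rho,l}}$ equivalence classes; since this relation is determined entirely by $\sigma_{T\setminus B_{\rho,l}}$, the equivalence class is a measurable function of exactly what we condition on, so the compatibility is automatic once one uses the starred measures throughout. With this proviso the MSW machinery runs through unchanged and yields $\mathrm{Ent}(f) \leq (2/\delta)\,\mathcal{E}^*_l$.
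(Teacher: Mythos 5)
Your high-level plan (run the MSW machinery with $\mu^{*,\sigma}_{B_{x,l}}$ and $\mathrm{Ent}^{*,\sigma}_{B_{x,l}}$ in place of the usual conditional objects) is the right one, and the opening decomposition $\mathrm{Ent}(f)=\mu(\mathrm{Ent}^{*,\sigma}_{B_{\rho,l}}(f))+\mathrm{Ent}(g)$ is legitimate since the sets $\Omega^{*,\sigma}_{B_{\rho,l}}$ partition $\Omega$. But the step carrying all the weight is not correct as stated. First, your ``deterministic upper bound'' $\mathrm{Ent}(\mu(g\mid\sigma_\rho))\le\log k\cdot\mu(\mathrm{Ent}^{*,\sigma}_{B_{\rho,l}}(f))$ is false: by the tower property $\mu(g\mid\sigma_\rho)=\mu(f\mid\sigma_\rho)$, and if $f$ is a function of $\sigma_\rho$ alone while the root is frozen inside its component with positive probability (exactly the hard-constraint situation this paper is built to handle), the right-hand side can vanish while the left-hand side stays positive. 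More generally, what hypothesis (\ref{eq:newcct}) buys --- via Lemma 5.4 of \cite{martinelli2004glauber} applied to $g_{c'}^{*(l)}-1$, as in Theorem \ref{thm:Dror5.3true} --- is a bound of the root entropy of the projection by a small multiple of the \emph{full} entropy $\mathrm{Ent}(f)$, i.e.\ the entropy-mixing statement $\mathrm{EM}^*(l,\epsilon)$; it does not, and cannot, bound it by a fraction of the single root-block entropy $\mu(\mathrm{Ent}^{*}_{B_{\rho,l}}(f))$.

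Second, the global bookkeeping does not close. Peeling off only the root block and asking for $\mathrm{Ent}(g)\le(1-\delta/2)\mathrm{Ent}(f)+(\text{absorbable terms})$ requires the root block to capture a fixed $\delta/2$ fraction of the entropy, which fails, e.g., for $f$ depending only on spins more than $l$ levels below the root (there $g=f$); recursing into the subtrees then produces block entropies of $g=\mu^{*}_{B_{\rho,l}}(f)$ rather than of $f$, and for blocks $B_{x,l}$ rooted in the top $l$ levels (precisely the ones needed for the telescoping) there is no convexity inequality converting $\mathrm{Ent}^{*}_{B_{x,l}}(g)$ into $\mathrm{Ent}^{*}_{B_{x,l}}(f)$, because the conditioning $\sigma$-fields are not nested. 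The paper instead follows MSW: it bounds $\mathrm{Ent}(f)\le\mathrm{PEnt}(f)=\sum_{x}\mu[\mathrm{Ent}_{T_x}(\mu_{\tilde T_x}(f))]$, derives from (\ref{eq:newcct}) the estimate $\mathrm{Ent}^{\eta}_{T_x}[\mu_{\tilde T_x}(f)]\le c\,\mu^{\eta}_{T_x}[\mathrm{Ent}^{*}_{B_{x,l}}(f)]+\frac{1-\delta}{l}\,\mu^{\eta}_{T_x}[\mathrm{Ent}_{\tilde T_x}(f)]$ (Lemma \ref{lem:Dror3.5} with $\epsilon'=(1-\delta)/(l+1-\delta)$), and uses that each vertex lies in at most $l$ blocks to obtain the self-bounding inequality $\mathrm{PEnt}\le c\,\mathcal{E}^*_l+(1-\delta)\mathrm{PEnt}$, whence $\mathrm{Ent}(f)\le\frac{c}{\delta}\mathcal{E}^*_l$. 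This is exactly where the $(l+1-\delta)^2$ calibration of the threshold in (\ref{eq:newcct}) enters; your outline never uses the $l$-dependence of the hypothesis, which is a sign that the geometric-series-over-depth accounting cannot reproduce the claimed $\frac{2}{\delta}$ constant.
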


To prove Theorem \ref{thm:thm1}, it suffices to verify (\ref{eq:newcct}) for some choice of $l$ and $\delta$. Note the original inequality in Theorem~5.3 of \cite{martinelli2004glauber} or Theorem~5.1 of~\cite{bhatnagar2011reconstruction} is
\begin{equation}
	\Pr\nolimits_{\tau\sim\mu^{c}}\left(\left|\frac{\mu^{c}(\sigma_{x}=c'\mid\sigma_{L_{x,l}}=\tau_{L_{x,l}})}{\mu^{c}(\sigma_{x}=c')}-1\right|\ge\frac{(1-\delta)^{2}}{\alpha(l+1-\delta)^{2}}\right)\le e^{-2\alpha(l+1-\delta)^{2}/(1-\delta)^{2}}\label{eq:originalcct}
\end{equation}

The only difference between (\ref{eq:newcct}) and (\ref{eq:originalcct}) is that in equation (\ref{eq:newcct}), the inner measure $\mu^{c}$ conditions not only on boundary condition $\sigma_{L_{x,l}}=\tau_{L_{x,l}}$, but also the connected component of $\tau$. We will first prove a stronger version the original inequality given the non-reconstruction of system $M$ and $d\lambda^{2}<1$.
\begin{thm}
	\label{lem:originalcct}For a $k$-state system $M$, if $M$ is non-reconstructible and $d\lambda^{2}<1$, then there exist some constant $\xi>0$, $l_{0}\ge1$ such that for all $l\ge l_{0}$, every $x\in T$ that is at least $l$ levels from the leaves, and any compatible pair of states $c,c'\in[k]$, $\mu^{c}=\mu_{T_{x}}^{c}$ satisfies
	\begin{equation}
		\Pr\nolimits_{\tau\sim\mu^{c}}\left(\left|\frac{\mu^{c}(\sigma_{x}=c'\mid\sigma_{L_{x,l}}=\tau_{L_{x,l}})}{\mu^{c}(\sigma_{x}=c')}-1\right|\ge e^{-\xi l}\right)\le\exp(-e^{\xi l}).\label{eq:dbexpdecay-1}
	\end{equation}
\end{thm}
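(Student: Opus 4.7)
The plan combines three ingredients: non-reconstruction (to force the first moment of the posterior deviation to vanish), the Kesten--Stigum bound $d\lambda^{2}<1$ (to drive the second moment to zero exponentially), and a high-moment/Markov argument (to convert $L^{2}$ decay into the doubly-exponential tail).

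Denote the posterior odds by $R_{l}(\tau)=\mu^{c}(\sigma_{x}=c'\mid\sigma_{L_{x,l}}=\tau)/\mu^{c}(\sigma_{x}=c')$, and set $Z_{l}=R_{l}-1$. The Markov structure of the tree expresses $R_{l}$ as an explicit rational function of the analogous ratios $\{R_{l-1}^{(i),c''}\}_{i\le d,\,c''\in[k]}$ defined on the subtrees rooted at the children $y_{1},\dots,y_{d}$ of $x$. First I would pick an initial depth $l_{0}$ large enough that non-reconstruction gives $\mathbb{E}|Z_{l_{0}}|<\varepsilon$ for every compatible pair of colors, where $\varepsilon$ depends only on $1-d\lambda^{2}$. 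Linearizing the recursion and applying the standard second-moment analysis of Mossel--Peres type yields a contraction $\mathrm{Var}(Z_{l})\le d\lambda^{2}\cdot\max_{i,c''}\mathrm{Var}(Z_{l-1}^{(i),c''})+O(\varepsilon\cdot\mathrm{Var}(Z_{l-1}))$, which iterates to $\mathrm{Var}(Z_{l})\le C\rho^{l}$ for some $\rho<1$.

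To upgrade the $L^{2}$ bound to the doubly-exponential tail, I would prove the higher-moment estimate $\mathbb{E}[Z_{l}^{2k}]\le(Ck\rho^{l})^{k}$ by induction on $l$. Because the $d$ subtrees are conditionally independent given the spins at the children and each $|Z_{l-1}^{(i),c''}|$ is bounded by a constant depending only on $M$, one can expand $Z_{l}$ into its linearization plus a quadratic remainder: the linearization is a sum of $d$ approximately independent bounded mean-zero terms, so Marcinkiewicz--Zygmund produces a $(2k)^{k}$ factor multiplied by $(d\lambda^{2})^{k}$ and by $(\mathrm{Var}(Z_{l-1}))^{k}$. Inserting the $L^{2}$ bound and absorbing the quadratic remainder into the induction hypothesis gives $\mathbb{E}[Z_{l}^{2k}]\le(Ck\rho^{l})^{k}$. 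Markov's inequality then yields
\[\Pr(|Z_{l}|\ge e^{-\xi l})\le (Ck\rho^{l}e^{2\xi l})^{k}=(Ck(\rho e^{2\xi})^{l})^{k},\]
and choosing $k\sim e^{\xi l}$ with $\xi<\tfrac{1}{3}\log(1/\rho)$ produces the required $\exp(-e^{\xi l})$ tail.

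The main technical obstacle is controlling the quadratic and higher-order terms in the expansion of the recursion: these cross-product contributions are not conditionally independent across subtrees, and naive bounds inflate the constant to $k^{2k}$ or worse, which would destroy the final optimization. I expect the resolution to be a careful split into a \emph{linear} part (where Marcinkiewicz--Zygmund yields the clean $k^{k}$ dependence) and a remainder bounded by $\|Z_{l-1}\|_{\infty}^{2}\cdot\mathrm{Var}(Z_{l-1})$, giving an extra $\rho^{l-1}$ factor that can be absorbed into the induction. Should this bookkeeping become unwieldy, a clean alternative is to first establish a weaker (say exponential) tail bound via Chebyshev and then boost it through one more application of the tree recursion, exploiting the fact that the tree branches into $d$ approximately independent copies at every level so each round of boosting compounds the decay rate.
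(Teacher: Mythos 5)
Your first ingredient (exponential decay of the first/second moment of the posterior deviation from non-reconstruction plus $d\lambda^{2}<1$) matches the paper, which gets it by citing Janson--Mossel's robust reconstruction theorem to obtain $\mathbb{E}_{\tau\sim\mu}R_{x,l}\le C_{2}e^{-C_{1}l}$ and hence, by Markov, a singly exponential tail $\Pr(R_{x,l}>z)\le\frac{C_{2}}{z}e^{-C_{1}l}$. The gap is in your amplification step. The claimed moment bound $\mathbb{E}[Z_{l}^{2k}]\le(Ck\rho^{l})^{k}$, uniformly in $k$ up to $k\sim e^{\xi l}$, asserts sub-Gaussian behaviour of $Z_{l}$ at variance scale $\rho^{l}$ down to probabilities $\exp(-c\rho^{-l})$, and this cannot come out of a Marcinkiewicz--Zygmund/Rosenthal bound applied to a sum of only $d$ (a fixed number of) bounded, conditionally independent terms: any such inequality carries the unavoidable term $k^{2k}\max_{i}\mathbb{E}[X_{i}^{2k}]$ (a sum of boundedly many bounded variables is sub-exponential, not sub-Gaussian), and with $X_{i}\approx\lambda Z_{l-1}^{(i)}$ and the induction hypothesis this contributes roughly $(k^{2}\lambda^{2}/\rho)^{k}$ relative to the target, which explodes for $k$ growing with $l$. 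Worse, for systems with hard constraints the moment bound itself is in doubt: the boundary can (nearly) freeze the root, so $|Z_{l}|$ is of constant order $c$ on an event of probability about $\exp(-b^{l})$ for a model-dependent base $b$, giving $\mathbb{E}[Z_{l}^{2k}]\ge c^{2k}\exp(-b^{l})$; at $k\approx b^{l}$ this exceeds $(Ck\rho^{l})^{k}$ whenever $b\rho<1$. Your proposed fix (splitting off a quadratic remainder bounded by $\|Z_{l-1}\|_{\infty}^{2}\mathrm{Var}(Z_{l-1})$) only treats the nonlinear error, not this obstruction in the linear part, and nothing in your argument relates the freezing rate $b$ to the correlation decay rate $\rho$.

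Your closing fallback -- get a weaker tail first and then boost it through the tree recursion using the independence of the $d$ subtrees -- is in fact the paper's actual mechanism, but it needs more than ``one more application.'' The paper first proves a deterministic contraction lemma: using ergodicity of $M$ (so that $M^{m}$ contracts toward $\pi$, not the Kesten--Stigum bound), for suitable $m$ and $\epsilon>0$, if at most one of the $d^{m}$ depth-$m$ descendants has $R_{y_{i},l-m}>\epsilon$ then $R_{x,l}\le\frac{1}{2}\sum_{i}R_{y_{i},l-m}$. Consequently $R_{x,l}>z$ forces at least \emph{two} descendants to have large ratios (either $>\epsilon$ or $>z/2d^{m}$) unless a single one exceeds $\frac{3}{2}z$, yielding for $g(z,l)=\Pr(R_{x,l}>z)$ the recursion $g(z,l)\le d^{m}g(\tfrac{3}{2}z,l-m)+2d^{2m}g^{2}(z/C,l-m)$. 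The squared term, coming from two independent subtrees, is what turns the singly exponential input into a doubly exponential output after iterating $\Theta(l)$ rounds, with careful bookkeeping of how the threshold degrades by factors of $C$ versus improves by factors of $\tfrac{3}{2}$ (so that it is eventually above the a priori bound $\pi_{\min}^{-1}$ and those terms vanish). This route needs only the exponential first-moment bound and entirely avoids the high-moment/freezing difficulty; to repair your proof you would need to replace the moment induction by such a tail-probability recursion (or prove a freezing estimate strong enough to salvage the moments, which is a substantially harder and model-specific statement).
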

From there, we will show under our connectivity condition, the difference between $\sigma\sim_{B_{x,l}}\tau$ and $\sigma_{L_{x,l}}=\tau_{L_{x,l}}$ is negligible in the upper half of the block when $l$ is large and hence a similar tail distribution holds for the root.
\begin{lem}
	\label{lem:newcct}Let $M$ be a $k$-state system satisfying $\mathcal{C}$ such that (\ref{eq:originalcct}) holds for $l\ge l_{0}$ and $\delta=\delta_0$. Then there exist constants $l_1\ge 2l_{0}$ and $\delta_1\ge\delta_0$ such that for all $l\ge l_1$, equation (\ref{eq:newcct}) holds with $\delta=\delta_1$.
\end{lem}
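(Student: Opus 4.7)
The plan is to reduce (\ref{eq:newcct}) to (\ref{eq:originalcct}) by showing that under the connectivity condition $\mathcal{C}$, the connected component $C_\tau \subseteq \Omega_{B_{x,l}}^{\tau}$ of a typical $\tau$ carries nearly all the conditional probability mass, so the extra ``component'' conditioning in (\ref{eq:newcct}) does not perturb the root marginal relative to the ``boundary only'' conditioning in (\ref{eq:originalcct}). Setting $\tilde\mu^\tau := \mu^c(\cdot \mid \sigma_{L_{x,l}} = \tau_{L_{x,l}})$, the starting point is the identity
\[
\frac{\mu^c(\sigma_x = c' \mid \sigma \sim_{B_{x,l}} \tau)}{\mu^c(\sigma_x = c')}
= \frac{\tilde\mu^\tau(\sigma_x = c')}{\mu^c(\sigma_x = c')} \cdot \frac{\tilde\mu^\tau(C_\tau \mid \sigma_x = c')}{\tilde\mu^\tau(C_\tau)}.
\]
The first factor is controlled directly by the hypothesis (\ref{eq:originalcct}), and there is substantial slack because Theorem~\ref{lem:originalcct} actually yields doubly-exponential decay $\exp(-e^{\xi l})$ whereas (\ref{eq:newcct}) demands only polynomial-in-$l$ accuracy. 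So it suffices to show that with high $\mu^c$-probability both $\tilde\mu^\tau(C_\tau)$ and $\tilde\mu^\tau(C_\tau \mid \sigma_x = c')$ exceed $1 - o(l^{-2})$.

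To control the component mass I would introduce a \emph{niceness} event. Fix a large constant $m_0$ (depending on the rate at which $p_l^{\text{free}}\to 1$ under $\mathcal{C}$) and call $\sigma$ nice if every vertex in the top $m_0$ levels of $B_{x,l}$ is free, in the sense of the paper's definition, within its own sub-block of $B_{x,l}$ (of depth at least $l-m_0$) with boundary $\sigma_{L_{x,l}}$. By clause~(2) of $\mathcal{C}$ and a union bound over the $O(d^{m_0})$ top-level vertices, the $\mu^c$-probability that $\sigma$ is not nice is $o(1)$ as $l \to \infty$; taking expectations and applying Markov's inequality upgrades this to the quantitative statement $\tilde\mu^\tau(\sigma \text{ not nice}) \le l^{-3}$ except on a $\mu^c$-rare set of $\tau$. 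The crucial combinatorial ingredient is that any two nice configurations sharing the boundary $\sigma_{L_{x,l}}$ lie in a common Glauber component; this is where clause~(1) of $\mathcal{C}$ (three-state compatibility) enters, and is presumably the content of Claim~\ref{lem:freetoconnect} referenced earlier. I would prove this recursively top-down: use freeness at the root to morph $\sigma_\rho$ into $\sigma'_\rho$, routing through a ``bridge'' spin whose existence is guaranteed by three-state compatibility so that every intermediate configuration respects the hard constraints, and then recurse on each first-level subtree using the niceness inherited there.

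Combining the two ingredients, if $\tau$ itself is nice then $C_\tau$ contains all nice configurations with boundary $\tau_{L_{x,l}}$, so $\tilde\mu^\tau(C_\tau) \ge 1 - \tilde\mu^\tau(\text{not nice})$ is close to $1$. Exactly the same argument, conditioned on $\sigma_x = c'$, bounds $\tilde\mu^\tau(C_\tau \mid \sigma_x=c')$, since niceness of $\sigma$ at levels $\le m_0$ is essentially unaffected by restricting a single spin at the root. Plugging into the identity above yields (\ref{eq:newcct}) with $\delta_1$ arbitrarily close to $\delta_0$ once $l \ge l_1$ is large enough to absorb both the union-bound error in the niceness step and the correction factor in the algebraic identity.

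\emph{Main obstacle.} The hardest step is the connectivity-from-niceness statement: verifying that the recursive top-down path construction never leaves $\Omega_{B_{x,l}}^{\tau}$, which is precisely what three-state compatibility is designed to facilitate (it guarantees the ``bridge'' spin at every single-site substitution). A subtler point is that the recursive procedure must not destroy the niceness of vertices higher up the tree when modifying lower ones; since the freeness of a vertex depends only on the configuration inside its own sub-block, ordering the updates to respect this containment structure keeps the hypothesis of niceness available at each step.
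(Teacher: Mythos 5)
There is a genuine gap, and it is at the very first reduction. You propose to show that with high probability \emph{both} $\tilde\mu^\tau(C_\tau)$ and $\tilde\mu^\tau(C_\tau\mid\sigma_x=c')$ are $1-o(l^{-2})$, i.e.\ that the Glauber component of a typical $\tau$ carries essentially all of the boundary-conditioned mass. For the models this lemma is designed for (hard constraints below uniqueness, e.g.\ $k$-colorings with $k\le d+1$) this is false: within $O(1)$ levels of the bottom boundary $L_{x,l}$ a positive density of vertices sit in locally frozen gadgets that admit several boundary-compatible local configurations not connected by single-site moves, and these gadgets occur independently across $\Theta(d^{l})$ disjoint subtrees. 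Consequently $\Omega^{\tau}_{B_{x,l}}$ typically shatters into exponentially many components and $\tilde\mu^\tau(C_\tau)$ is exponentially small in the block volume, not close to $1$. Your factorization identity is correct, but the two factors cannot be controlled separately this way; what must be shown is that the \emph{ratio} $\tilde\mu^\tau(C_\tau\mid\sigma_x=c')/\tilde\mu^\tau(C_\tau)$ is close to $1$, i.e.\ that component membership decouples from the root spin. The same issue sinks the combinatorial step: with niceness imposed only on the top $m_0=O(1)$ levels, two nice configurations with the same leaf boundary can differ on frozen vertices below level $m_0$, where you have assumed nothing; freeness of a vertex only allows its own spin to change and does not let you rearrange its subtree, so your top-down recursion stalls at depth $m_0$ and the claim ``all nice configurations with boundary $\tau_{L_{x,l}}$ lie in one component'' is false.

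The paper's proof is built precisely to avoid these two traps. It never claims the component has large mass; instead it compares only configurations that agree on the whole bottom half $B_{x,l}\setminus B_{x,l/2}$ and on the middle level $L_{x,l/2}$, and imposes freeness at level $l/2+2$ (a depth growing with $l$, where Lemma~\ref{lem:cnctcond} gives doubly exponential control, not just clause (2) of $\mathcal{C}$). Claim~\ref{lem:freetoconnect} then connects such pairs, so on the event $A_\tau$ the component membership is a function of the bottom half alone; by conditional independence across $L_{x,l/2}$ (Lemma~\ref{lem:condindep}) the component conditioning cancels from the root marginal given $\eta=\sigma_{L_{x,l/2}}$, and the unconditional concentration of Theorem~\ref{lem:originalcct} is applied at depth $l/2$ (this is why $l_1\ge 2l_0$), with Markov inequalities over $\tau$ transferring the bad-$\eta$ and bad-$A_\tau$ events to the component measure. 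To repair your argument you would need to replace the ``component has full mass'' step by this kind of decoupling at an intermediate level, which is essentially the content of the paper's Lemmas~\ref{lem:condindep} and~\ref{lem:cnctcond} and Claim~\ref{lem:freetoconnect}.
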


Theorems~\ref{thm:Dror5.3} and~\ref{lem:originalcct} and Lemma~\ref{lem:newcct} together imply Theorem~\ref{thm:thm1}. The rest of the paper is structured as follows, we will prove Theorem \ref{lem:originalcct} in Section \ref{sec:doubleexpdecay} and Lemma \ref{lem:newcct} in Section \ref{sec:twocct}, and we will include a sketch of Theorem \ref{thm:Dror5.3} in Section \ref{sec:ProofofDror}. After that we will apply the result to the $k$-coloring model and prove Theorem~\ref{thm:thm2} in Section \ref{sec:coloring}.

\section{From non-reconstruction to the strong concentration property\label{sec:doubleexpdecay}}

In this section we prove Theorem \ref{lem:originalcct}. The result for $k$-coloring model was proved in \cite{bhatnagar2011reconstruction}, which used the specific structure of coloring model. Here we will give a different proof for general systems $M$ using only non-reconstruction and that $M$ is strictly below Kesten-Stigum bound $d\lambda^{2}<1$. We first introduce some notations. Recall that the stationary distribution of $M$ is $\pi$. For $x\in T$, let
\[ \tilde{R}_{x,l}(\tau)(c)=\frac{1}{\pi_{c}}\mu_{T_{x}}(\sigma_{x}=c\mid\sigma_{L_{x,l}}=\tau_{L_{x,l}}) \]
denote the ratio of conditional and unconditional distribution at $x$ and write $R_{x,l}(\tau)=\|\tilde{R}_{x,l}(\tau)-1\|_\infty=\max_{c\in[k]}|\tilde{R}_{x,l}(\tau)(c)-1|$. We will omit $\tau$ when it is clear from context.
In the proof we will work with the unconditional Gibbs measure $\mu=\mu_{T_{x}}$ and $\pi$ instead of $\mu_{T_{x}}^{c}$ and $\mu_{T_{x}}^{c}(\sigma_{x}=c')$ and show the following inequality
\begin{equation}
	\Pr\nolimits_{\tau\sim\mu}\left(R_{x,l}(\tau)\ge e^{-\xi l}\right)\le\exp(-e^{\xi l}).\label{eq:dbexpdecay}
\end{equation}
To see (\ref{eq:dbexpdecay}) implies (\ref{eq:dbexpdecay-1}), consider the Markov chain construction of $\sigma$. Let $E$ be the edge set of $T_{x}$, we have
\[ \mu(\sigma)=\pi_{\sigma_{x}}\prod_{(y,z)\in E}M(\sigma_{y},\sigma_{z}),\ \mu^{c}(\sigma)=M(c,\sigma_{x})\prod_{(y,z)\in E}M(\sigma_{y},\sigma_{z}). \]
Hence the Radon-Nikodym derivative $\frac{d\mu^{c}}{d\mu}$ satisfies $\frac{d\mu^{c}}{d\mu}(\sigma)=\frac{M(c,\sigma_{x})}{\pi_{\sigma_{x}}}\le \pi^{-1}_{\min}$, where $\pi_{\min}=\min_{c\in[k]}\pi_{c}$. Note
\[ \left|\frac{\mu^{c}(\sigma_{x}=c'\mid\sigma_{L_{x,l}}=\tau_{L_{x,l}})}{\mu^{c}(\sigma_{x}=c')}-1\right| = \left|\frac{1}{\pi_{c'}}\mu(\sigma_{x}=c'\mid\sigma_{L_{x,l}}=\tau_{L_{x,l}})-1\right| \le R_{x,l}(\tau), \]
we have that the LHS of equation (\ref{eq:dbexpdecay-1}) can be upper bounded by the LHS of (\ref{eq:dbexpdecay}) up to a factor of $\pi_{\min}^{-1}$.
The following lemma gives the recursive relation of $\tilde{R}_{x,l}(c)$.
\begin{lem}
	Let $x_{1},\dots,x_d$ denote the $d$ children of $x\in T$, $\tilde{R}_{x,l}$ can be written as a function of $\tilde{R}_{x_i,l-1}$:
	\begin{equation}
		\tilde{R}_{x,l}(c)=\frac{\prod_{i=1}^dM\tilde{R}_{x_i,l-1}}{\pi\prod_{i=1}^dM\tilde{R}_{x_i,l-1}}(c) =\frac{\prod_{i=1}^d\sum_{c_i\in[k]}M(c,c_i)\tilde{R}_{x_i,l-1}(c_i)} {\sum_{c'\in[k]}\pi_{c'}\prod_{i=1}^d\sum_{c_i\in[k]}M(c',c_i)\tilde{R}_{x_i,l-1}(c_i)}.\label{eq:Bayesrecur}
	\end{equation}
\end{lem}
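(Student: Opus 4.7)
The plan is to derive the recursion by a direct Bayesian calculation that exploits the tree Markov property of the broadcast construction of $\mu_{T_x}$. Write $\tau = \tau_{L_{x,l}}$ throughout and abbreviate $\tau^{(i)} = \tau_{L_{x_i,l-1}}$ for the restriction of $\tau$ to the leaves of $T_{x_i}$. The idea is simply to expand $\tilde R_{x,l}(c)$ through Bayes' rule at $x$, then a second application of Bayes' rule at each child $x_i$ will produce $\tilde R_{x_i,l-1}$ inside the product.

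First I would apply Bayes' rule at the root of $T_x$:
\[
\mu(\sigma_x = c \mid \sigma_{L_{x,l}} = \tau) \;=\; \frac{\pi_c\, \mu(\sigma_{L_{x,l}} = \tau \mid \sigma_x = c)}{\sum_{c'} \pi_{c'}\, \mu(\sigma_{L_{x,l}} = \tau \mid \sigma_x = c')}.
\]
Next, the Markov chain construction of $\mu$ on $T_x$ implies that, conditional on $\sigma_x$, the restrictions of $\sigma$ to the $d$ subtrees $T_{x_1},\dots,T_{x_d}$ are independent. Therefore the likelihood factorises as
\[
\mu(\sigma_{L_{x,l}} = \tau \mid \sigma_x = c) \;=\; \prod_{i=1}^d \mu\bigl(\sigma_{L_{x_i,l-1}} = \tau^{(i)} \,\big|\, \sigma_x = c\bigr),
\]
and conditioning on the child's spin gives
\[
\mu\bigl(\sigma_{L_{x_i,l-1}} = \tau^{(i)} \,\big|\, \sigma_x = c\bigr) \;=\; \sum_{c_i \in [k]} M(c,c_i)\,\mu\bigl(\sigma_{L_{x_i,l-1}} = \tau^{(i)} \,\big|\, \sigma_{x_i} = c_i\bigr).
\]

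The second application of Bayes' rule, now at each $x_i$, rewrites
\[
\mu\bigl(\sigma_{L_{x_i,l-1}} = \tau^{(i)} \,\big|\, \sigma_{x_i} = c_i\bigr) \;=\; \frac{\mu(\sigma_{x_i} = c_i \mid \sigma_{L_{x_i,l-1}} = \tau^{(i)})}{\pi_{c_i}}\,\mu(\sigma_{L_{x_i,l-1}} = \tau^{(i)}) \;=\; \tilde R_{x_i,l-1}(c_i)\,\mu(\sigma_{L_{x_i,l-1}} = \tau^{(i)}).
\]
The key observation is that the factor $\mu(\sigma_{L_{x_i,l-1}} = \tau^{(i)})$ does not depend on $c$, so the product $\prod_i \mu(\sigma_{L_{x_i,l-1}} = \tau^{(i)})$ appears in both the numerator and the denominator of the Bayes expression and cancels. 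What remains is exactly
\[
\mu(\sigma_x = c \mid \sigma_{L_{x,l}} = \tau) \;=\; \frac{\pi_c \prod_{i=1}^d (M\tilde R_{x_i,l-1})(c)}{\sum_{c' \in [k]} \pi_{c'} \prod_{i=1}^d (M\tilde R_{x_i,l-1})(c')}.
\]
Dividing both sides by $\pi_c$ yields $\tilde R_{x,l}(c)$ on the left, and the right-hand side is precisely the claimed expression, with the denominator being the pairing $\pi \prod_i M\tilde R_{x_i,l-1}$.

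The calculation is entirely routine once the Markov property is in place; the only potential pitfall is bookkeeping which factors depend on $c$ and which do not, so I would be careful to isolate $\mu(\sigma_{L_{x_i,l-1}} = \tau^{(i)})$ as a $c$-independent constant before cancelling it. No non-trivial estimate or additional hypothesis is needed for this lemma.
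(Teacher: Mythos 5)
Your proof is correct and follows essentially the same route as the paper: both rest on the Markov chain factorization of the likelihood over the $d$ subtrees given $\sigma_x$, followed by Bayes' rule to convert each subtree likelihood into $\tilde R_{x_i,l-1}$ and a cancellation of the $c$-independent normalizing factors. The paper phrases this as a direct expansion of the joint probabilities $\mu(\Omega(c))$ and divides numerator and denominator by $\prod_i\sum_{c_i'}\mu(\Omega_i(c_i'))$ at the end, which is just a different bookkeeping of the same calculation.
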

\begin{proof}
	Let $E$ and $E_i$ denote the edge set of $T_{x}$ and $T_{x_i}$, they satisfy $E=\cup_i\left(E_i\cup\{(x,x_i)\}\right)$. Let $\Omega(c)=\{\sigma:\sigma_{x}=c,\sigma_{L_{x,l}}=\tau_{L_{x,l}}\}$ and $\Omega_i(c)=\{\sigma:\sigma_{x_i}=c,\sigma_{L_{x_i,l-1}}=\tau_{L_{x_i,l-1}}\}$ be the set of configurations on $T_{x}$ and $T_{x_i}$ with boundary condition $\tau$. By the Markov chain construction, we have
	\begin{align*}
		\mu(\Omega(c)) & =\mu(\sigma_{x}=c,\sigma_{L_{x,l}}=\tau_{L_{x,l}})\\
		& =\sum_{\sigma\in\Omega(c)}\pi_{c}\prod_{(y,z)\in E}M(\sigma_{y},\sigma_{z})=\sum_{c_{1},\cdots,c_d\in[k]}\pi_{c}\prod_{i=1}^dM(c,c_i)\sum_{\sigma^i\in\Omega_i(c_i)}\prod_{(y,z)\in E_i}M(\sigma_{y}^i,\sigma_{z}^i)\\
		& =\sum_{c_{1},\cdots,c_d\in[k]}\pi_{c}\prod_{i=1}^d\frac{M(c,c_i)}{\pi_{c_i}}\mu(\Omega_i(c_i))=\pi_{c}\prod_{i=1}^d\sum_{c_i\in[k]}\frac{M(c,c_i)}{\pi_{c_i}}\mu(\Omega_i(c_i)).
	\end{align*}
	Therefore by Bayes formula,
	\begin{align*}
		\tilde{R}_{x,l}(c) & =\frac{1}{\pi_{c}}\mu(\sigma_{x}=c\mid\sigma_{L_{x,l}}=\tau_{L_{x,l}})=\frac{1}{\pi_{c}}\frac{\mu(\Omega(c))}{\sum_{c'\in[k]}\mu(\Omega(c'))}\\
		& =\frac{\prod_{i=1}^d\sum_{c_i\in[k]}\frac{M(c,c_i)}{\pi_{c_i}}\mu(\Omega_i(c_i))}{\sum_{c'\in[k]}\pi_{c'}\prod_{i=1}^d\sum_{c_i\in[k]}\frac{M(c',c_i)}{\pi_{c_i}}\mu(\Omega_i(c_i))}=\frac{\prod_{i=1}^d\sum_{c_i\in[k]}M(c,c_i)\tilde{R}_{x_i,l-1}(c_i)}{\sum_{c'\in[k]}\pi_{c'}\prod_{i=1}^d\sum_{c_i\in[k]}M(c',c_i)\tilde{R}_{x_i,l-1}(c_i)}.
	\end{align*}
	where the last step followed by dividing both the nominator and denominator by $\prod_{i=1}^d\sum_{c'_i\in[k]}\mu(\Omega_i(c'_i))$.
\end{proof}
Observe that in the recursive relationship of (\ref{eq:Bayesrecur}), $\tilde{R}_{x,l}(c)$ is a rational function of $(\tilde{R}_{x_i,l-1}(c_i))_{i=1,\dots,d,c_i\in[k]}$. If for all $i=1,\dots,d$ we have $\tilde{R}_{x_i,l-1}=1$, then $\tilde{R}_{x,l}=1$. By the continuity of (\ref{eq:Bayesrecur}) and ergodicity of $M$, we establish the following contraction property of $R_{x,l}$.
\begin{lem}
	\label{lem:Rcontraction}There exist an integer $m\ge 1$ and constant $\epsilon>0$ such that for all $d^{m}$ vertices $y_{1},\dots,y_{d^{m}}\in L_{x,m}$, if at most one $y_i$ has $R_{y_i,l-m}>\epsilon$ then
	\begin{equation}
		R_{x,l}\leq\frac{1}{2}\sum_{i=1}^{d^{m}}R_{y_i,l-m}.\label{eq:Rcontraction}
	\end{equation}
\end{lem}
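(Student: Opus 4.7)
The plan is to linearize the recursion (\ref{eq:Bayesrecur}) around the fixed point $\tilde R\equiv\mathbf 1$, show the linearization is governed by iterates of $M$ on the $\pi$-mean-zero hyperplane, and use ergodicity of $M$ to convert this into an $\ell^\infty$-contraction after finitely many iterations. A direct check using $(M\mathbf 1)(c)=1$ and $\pi^T M=\pi^T$ confirms that $\mathbf 1$ is a fixed point of (\ref{eq:Bayesrecur}). Writing $\eta_y=\tilde R_y-\mathbf 1$ (so that $\pi^T\eta_y=0$), the first-order expansion of (\ref{eq:Bayesrecur}) reads $\eta_x=\sum_i M\eta_{x_i}+O(\|\eta\|_\infty^2)$, and iterating $m$ times gives $\eta_x\approx\sum_{y\in L_{x,m}}M^m\eta_y$. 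Since $M$ is ergodic and reversible its second eigenvalue satisfies $\lambda<1$, and by finite-dimensional norm equivalence there is a constant $C_0$ with $\|M^m f\|_\infty\le C_0\lambda^m\|f\|_\infty$ for every $\pi$-mean-zero $f$. Fix $m$ large enough that $C_0\lambda^m\le 1/4$.

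One key exact identity isolates the ``one large leaf'' sub-case: if $\tilde R_{y_i}\equiv\mathbf 1$ for every $i\neq 1$, then telescoping (\ref{eq:Bayesrecur}) up the unique non-trivial path from $y_1$ to $x$ collapses every side-branch (using $M\mathbf 1=\mathbf 1$) and normalises every denominator (using $\pi^T M=\pi^T$), giving exactly $\tilde R_{x,l}=M^m\tilde R_{y_1}$. In Case A (all $R_{y_i,l-m}\le\epsilon$), Taylor expand $T^m$ about $(\mathbf 1,\dots,\mathbf 1)$: the linear part contributes at most $C_0\lambda^m\sum_i R_{y_i,l-m}$ and, since $T^m$ is rational with non-vanishing denominator near $\mathbf 1$, the higher-order remainder is at most $C_2\epsilon\sum_i R_{y_i,l-m}$. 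Choosing $\epsilon$ with $C_2\epsilon\le 1/4$ yields $R_{x,l}\le\tfrac12\sum_i R_{y_i,l-m}$. In Case B (exactly one $R_{y_1,l-m}>\epsilon$, all others $\le\epsilon$), decompose
\[
\tilde R_{x,l}-\mathbf 1=(M^m\tilde R_{y_1}-\mathbf 1)+\Delta,\qquad \Delta=T^m(\tilde R_{y_1},\tilde R_{y_2},\dots)-T^m(\tilde R_{y_1},\mathbf 1,\dots,\mathbf 1).
\]
The first summand has norm at most $C_0\lambda^m R_{y_1,l-m}\le\tfrac14 R_{y_1,l-m}$. For $\Delta$ we apply the mean value theorem, for which we need bounds on the partial derivatives $D_iT^m$ at points near $(\tilde R_{y_1},\mathbf 1,\dots,\mathbf 1)$. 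At the base point, $D_iT^m\eta_i$ can be tracked through the recursion by splitting the path from $y_i$ to $x$ at the common ancestor $z$ of $y_1$ and $y_i$ at depth $k$ below $x$: $\eta_i$ is pushed up to $z$'s child through $M^{m-k-1}$, combined at $z$ against $M^{m-k}\tilde R_{y_1}$ to produce a $\pi$-mean-zero residue (mean-zero because $\pi^T\tilde R=1$), and then propagated to $x$ through $M^k$. Since both $M^{m-k-1}$ and $M^k$ contract on mean-zero functions in $\ell^\infty$, this gives $\|D_iT^m\eta_i\|_\infty\le C_1\lambda^{m-1}\|\eta_i\|_\infty$ with $C_1$ uniform over $\tilde R_{y_1}$ in the compact set $\{\tilde R\in\mathbb R_{\ge 0}^k:\pi^T\tilde R=1\}$. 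By continuity of $DT^m$, the same bound (up to a factor of $2$) holds on the full MVT segment provided $\epsilon$ is small enough, so $\|\Delta\|_\infty\le 2C_1\lambda^{m-1}\sum_{i\ge 2}R_{y_i,l-m}$. Choosing $m$ so that also $2C_1\lambda^{m-1}\le\tfrac12$, the two bounds combine to give $R_{x,l}\le\tfrac12\sum_i R_{y_i,l-m}$.

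The main obstacle is this derivative estimate in Case B. The map $T^m$ is genuinely nonlinear around $(\tilde R_{y_1},\mathbf 1,\dots,\mathbf 1)$ once $\tilde R_{y_1}\not\equiv\mathbf 1$, and a crude Lipschitz bound relying only on $\|\tilde R\|_\infty\le\pi_{\min}^{-1}$ produces a constant independent of $m$, which is too weak to fit inside a $(1/2)$-contraction. The refinement exploits that $\pi^T\tilde R=1$ together with $\pi^T M=\pi^T$ forces every relevant residue to lie in the $\pi$-mean-zero hyperplane, so that $M$ contracts by $\lambda$ at each of the $m$ applications, and the net factor of $\lambda^m$ is harvested along both halves of the common-ancestor path.
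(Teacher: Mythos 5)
Your argument is correct and shares the paper's skeleton: the exact identity $\tilde R_{x,l}=M^{m}\tilde R_{y_1,l-m}$ when all other leaf ratios are $\mathbf 1$, smoothness of the $m$-fold recursion on the compact simplex, contraction of $M^{m}$ on $\pi$-mean-zero vectors, and the split into the all-small case and the one-large-leaf case (your Case A is exactly the paper's). Where you genuinely diverge is the cross-term $\Delta$ in Case B: you extract a factor $\lambda^{m-1}$ by tracking $\pi$-mean-zero residues up through the common ancestor, whereas the paper uses only a crude smoothness bound $\|\tilde R_{x,l}-M^{m}\tilde R_{y_1,l-m}\|\le C_2\sum_{i\ge 2}\|\tilde R_{y_i,l-m}-1\|$, with $C_2=C_2(m,d,k)$ carrying no contraction factor, and absorbs it by choosing $\epsilon\le (4C_2 d^{m}k)^{-1}\epsilon_1$, so the error is at most $\tfrac14\epsilon_1\le\tfrac14 R_{y_1,l-m}$ precisely because this case has $R_{y_1,l-m}>\epsilon_1$. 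So the ``main obstacle'' you identify is not actually an obstacle: the quantifier order ($m$, hence $C_2$, fixed first; $\epsilon$ chosen last; and the error compared against the lower bound $\epsilon_1$ on the large coordinate rather than against the small coordinates) makes the crude Lipschitz bound sufficient. Your refined estimate is sound as written --- the residue created at the common ancestor is indeed mean-zero because the denominator equals $1$ along the base configuration, its combination constant depends only on $k$ and $\pi_{\min}$ and not on $m$, so there is no circularity in fixing $m$ before $\epsilon$ --- and it buys a bound proportional to $\sum_{i\ge 2}R_{y_i,l-m}$ without invoking the case hypothesis for that term, at the cost of extra bookkeeping; the paper's route buys simplicity by leaning on $R_{y_1,l-m}>\epsilon_1$ instead.
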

\begin{proof}
	Denote the range of $\tilde{R}_{x,l}$ by the $k$ dimensional simplex $\Delta_{[k]}=\{R\in\mathbb{R}^{k},\pi R=1,R_i\ge0,i=1,\dots,k\}$. Let $f:\Delta_{[k]}^d\to\Delta_{[k]}$ be the function on the RHS of (\ref{eq:Bayesrecur}) such that $\tilde{R}_{x,l}=f(\tilde{R}_{x_{1},l-1},\dots,\tilde{R}_{x_d,l-1})$.
	When $\tilde{R}_{x_{2},l-1}=\cdots=\tilde{R}_{x_d,l-1}=1$, the function can be simplified as
	\[ \tilde{R}_{x,l}=f(\tilde{R}_{x_{1},l-1},1,\dots,1)=\frac{M\tilde{R}_{x_{1},l-1}}{\pi M\tilde{R}_{x_{1},l-1}}=M\tilde{R}_{x_{1},l-1}. \]
	Iterating the function $m$ times, we can write $\tilde{R}_{x,l}=f^{(m)}(\tilde{R}_{y_{1},l-m},\dots,\tilde{R}_{y_{d^{m}},l-m})$ for some function $f^{(m)}:\Delta_{[k]}^{d^{m}}\to\Delta_{[k]}$ 	
	. A similar calculation shows when $\tilde{R}_{y_{2},l-m}=\cdots=\tilde{R}_{y_{d^{m}},l-m}=1$,
	\[ \tilde{R}_{x,l}=f^{(m)}(\tilde{R}_{y_{1},l-m},1,\dots,1)=M^{m}\tilde{R}_{y_{1},l-m}. \]

	Since $f^{(m)}$ is smooth, for some $C_1=C_1(d,m,M)$ we have that
	\[ \| \tilde{R}_{x,l} - 1 - \sum_{i=1}^{d^{m}} (M^{m}\tilde{R}_{y_{i},l-m} - 1) \| \le C_1 \sum_{i=1}^{d^{m}} \|\tilde{R}_{y_{i},l-m}-1\|^2 \le C_1 k\sum_{i=1}^{d^{m}} \|\tilde{R}_{y_{i},l-m}-1\|_\infty^2. \]
	By the ergodicity of $M$, for sufficiently large $m$ and all $\tilde{R}\in\Delta_{[k]}$ we have $\|M^{m}\tilde{R} - 1\|_\infty \leq \frac{1}{4} \|\tilde{R} - 1\|_\infty$. Therefore for some $\epsilon_1=\epsilon_1(C_1,k)$ if $R_{y_i,l-m}\le\epsilon_{1}$ for all vertices $y_i\in L_{x,m}$ then
	\begin{equation}
		\label{eq:contractionA} \|\tilde{R}_{x,l}-1\|_\infty \le(\frac{1}{4}+C_1k\epsilon_1)\sum_{i=1}^{d^{m}}\|\tilde{R}_{y_{i},l-m}-1\|_\infty \le\frac{1}{2}\sum_{i=1}^{d^{m}}R_{y_i,l-m}.
	\end{equation}
	This suffices provided that there are no large $R_{y_i,l-m}$.	
	
	We now consider the case when there is one large $R_{y_i,l-m}$, which we can without loss of generality assume is $i=1$. 	
	Again since $f^{(m)}$ is smooth, there exists $C_2,\epsilon_{2}>0$ such that for all $\tilde{R}_{y_{1},l-m}>\epsilon_1$, if $\sup_{i\geq 2} R_{y_i,l-m}\le\epsilon_{2}$ then 	
	\[ \| \tilde{R}_{x,l} - M^{m}\tilde{R}_{y_{1},l-m} \| \leq C_2 \sum_{i=2}^{d^{m}} \|\tilde{R}_{y_{i},l-m}-1\|. \]
	Let $ \epsilon=\epsilon_2\wedge (4C_2 d^m k)^{-1}\epsilon_1 $, if we moreover have $\sup_{i\geq 2} R_{y_i,l-m}\le\epsilon$, then
	\begin{equation}
		\label{eq:contractionB} \|\tilde{R}_{x,l}-1\|_\infty \le\frac{1}{4}\|\tilde{R}_{y_1,l-m}-1\|_\infty + C_2 d^m k\epsilon \le \frac {1}{4} R_{y_1,l-m} + \frac{1}{4}\epsilon_1 \le \frac{1}{2} R_{y_1,l-m}.
	\end{equation}
	Combining equations~\eqref{eq:contractionA} and~\eqref{eq:contractionB} and noting $ \epsilon<\epsilon_1$ completes the proof.	

\end{proof}
So far we have not used the assumption of non-reconstruction and $d\lambda^{2}<1$. In~\cite{janson2004robust}, Janson and Mossel develop a notion called robust reconstruction and show that if $M$ satisfies $d\lambda^{2}<1$ then there exist constant $C_{1}>0$ depending on $d\lambda^{2}$ and $\delta>0$ depending on $d$ and $C_{1}$ such that if for some $l$, $d_{TV}(\mu_{L_{l}}^{c},\mu_{L_{l}})\le\delta$ for all $c\in[k]$, then $d_{TV}(\mu_{L_{l+1}}^{c},\mu_{L_{l+1}})\le e^{-C_{1}}d_{TV}(\mu_{L_{l}}^{c},\mu_{L_{l}})$. In our case, the existence of such $l$ is guaranteed by the definition of non-reconstruction, hence we have for some constant $C_{2}$ that
\[ d_{TV}(\mu_{L_{l}}^{c},\mu_{L_{l}})\le C_{2}e^{-C_{1}l}. \]
A duality argument shows
\begin{align*}
	\mathbb{E}_{\tau\sim\mu}|\tilde{R}_{x,l}(c)-1| & =\mathbb{E}_{\tau\sim\mu}\left|\frac{1}{\pi_{c}}\mu(\sigma_{x}=c\mid\sigma_{L_{x,l}}=\tau_{L_{x,l}})-1\right |=\mathbb{E}_{\tau\sim\mu}\left|\frac{\mu^{c}(\sigma_{L_{x,l}}=\tau_{L_{x,l}})}{\mu(\sigma_{L_{x,l}}=\tau_{L_{x,l}})}-1\right|\\
	& =\sum_{\tau}\left|\mu^{c}(\sigma_{L_{x,l}}=\tau_{L_{x,l}})-\mu(\sigma_{L_{x,l}}=\tau_{L_{x,l}})\right|=2d_{TV}(\mu_{L_{l}}^{c},\mu_{L_{l}})\le 2C_{2}e^{-C_{1}l}.
\end{align*}
Maximizing over $c$ we get $\mathbb{E}_{\tau\sim\mu}R_{x,l}\le C_{2}e^{-C_{1}l}$ for some (different) constant $C_{1},C_{2}>0$ and by Markov's inequality for all $l\ge 1,z>0$,
\begin{equation}
	\Pr\nolimits_{\tau\sim\mu}(R_{x,l}>z)\le\frac{C_{2}}{z}e^{-C_{1}l}.\label{eq:Svantebound}
\end{equation}
\begin{proof}
	[Proof of Theorem \ref{lem:originalcct}] By Lemma \ref{lem:Rcontraction}, the event $R_{x,l}>z$ implies that either there exist two $i$ such that $R_{y_i,l-m}>\epsilon$ or $\sum_{i=1}^{d^{m}}R_{y_i,l-m}>2z$. In the second case if the event $\sum_{i=1}^{d^{m}}R_{y_i,l-m}>2z$ holds and for every $y_i$, $R_{y_i,l-m}\le\frac{3}{2}z$, then there must exist at least two $i$ such that $R_{y_i,l-m}>\frac{1}{2d^{m}}z$, otherwise $\sum_{i=1}^{d^{m}}R_{y_i,l-m}\le\frac{3}{2}z+\frac{d^{m}-1}{2d^{m}}z<2z$. Therefore we can write
	\begin{align*}
		\Pr\nolimits_{\tau\sim\mu}(R_{x,l}>z) & \le\Pr\nolimits_{\tau\sim\mu}(\exists\mbox{ two }y_i\in L_{x,m},R_{y_i,l-m}>\epsilon)+\Pr\nolimits_{\tau\sim\mu}(\exists y_i\in L_{x,m},R_{y_i,l-m}>\frac{3}{2}z)\\
		& \hphantom{\le}+\Pr\nolimits_{\tau\sim\mu}(\exists\mbox{ two }y_i\in L_{x,m},R_{y_i,l-m}>\frac{1}{2d^{m}}z).
	\end{align*}
	Let $g(z,l)=\Pr\nolimits_{\tau\sim\mu}(R_{x,l}>z)$ and $C=\max\{2d^{m},\frac{1}{\epsilon\pi_{\min}}\}$, note $g(z,l)$ is a decreasing function in $z$, the equation above become
	\begin{align*}
		g(z,l) & \le d^{2m}g^{2}(\epsilon,l-m)+d^{m}g(\frac{3}{2}z,l-m)+d^{2m}g^{2}(\frac{1}{2d^{m}}z,l-m)\\
		& \le d^{m}g(\frac{3}{2}z,l-m)+2d^{2m}g^{2}(\frac{1}{C}z,l-m).
	\end{align*}
	Iterating this estimation $h$ times, we have
	\begin{equation}
		g(z,l)\le\sum_{i=0}^{h}(2d^{2m})^{2^{h-i}(1+i)}g^{2^{h-i}}\left((\frac{3}{2})^i(\frac{1}{C})^{h-i}z,l-hm\right).\label{eq:Grecursion}
	\end{equation}
	where the coefficient can be shown by induction on $h$ using inequality $(a+b)^{2}\le2(a^{2}+b^{2})$.
	
	Since for all $z>\pi_{\min}^{-1}$ we have $g(z,l)=0$, the summand on the RHS of (\ref{eq:Grecursion}) is zero for large $i.$ 	
	Fix $\kappa=\log\frac{4}{3}C/\log\frac{3}{2}C<1$, for $h\ge\log(\frac{1}{z\pi_{\min}})/\log(\frac{4}{3})$ and $i>\kappa h$, we have $(\frac{3}{2})^i(\frac{1}{C})^{h-i}z>\pi_{\min}^{-1}.$ Therefore
	\begin{align*}
		g(z,l) & \le\sum_{i=0}^{\kappa h}(2d^{2m})^{2^{h-i}(1+i)}g^{2^{h-i}}\left((\frac{3}{2})^i(\frac{1}{C})^{h-i}z,l-hm\right)\le\kappa h\left[(2d^{2m})^{h}g\left(C^{-h}z,l-hm\right)\right]^{2^{(1-\kappa)h}}
	\end{align*}
	Now applying the bound in (\ref{eq:Svantebound}) and let $h=rl/m$ for small $r>0$ such that $(1-r)C_{1}-r\cdot\frac{1}{m}\log(2Cd^{2m})>\frac{1}{2}C_{1}>0$, for large enough $l$ such that $ \log l \le 2^{\frac{(1-\kappa)r}{m}l} $ we have
	\begin{align*}
		g(z,l) & \le\kappa h\left((2d^{2m})^{h}\frac{C_{2}C^{h}}{z}e^{-C_{1}(l-hm)}\right)^{2^{(1-\kappa)h}}\\
		& \le\frac{\kappa r}{m}l\left(\frac{C_{2}}{z}(2Cd^{2m})^{\frac{r}{m}l}e^{-C_{1}(1-r)l}\right)^{2^{\frac{(1-\kappa)r}{m}l}} \le\frac{\kappa r}{m}\left(\frac{2C_{2}}{z}e^{-\frac{1}{2}C_{1}l}\right)^{2^{\frac{(1-\kappa)r}{m}l}}.
	\end{align*}
	Let $C_{3}=\frac{2}{C_{1}},C_{4}=\frac{\kappa r}{m}$, $C_{5}=\frac{(1-\kappa)r}{m}\log2$, for $l>C_{3}(1+\log 2C_{2}-\log z)$, we have $g(z,l)\le C_{4}\exp\{-\exp(C_{5}l)\}$.
	
	Finally define $\xi=\frac{1}{2}\min\{C_{3}^{-1},C_{5}\}$, plug in $z_{l}=\exp(-\xi l)$. When $l$ is large enough, we have $C_{3}(1+\log 2C_{2}-\log z)\le C_{3}(1+\log 2C_{2})+\frac{1}{2}l<l$ and $\exp(\exp(\frac{1}{2}C_{5}l))>C_{4}$, therefore
	\[ \Pr\nolimits_{\tau\sim\mu}\left(R_{x,l}(\tau)\ge e^{-\xi l}\right)=g(z_{l},l)\le C_{4}\exp(-e^{C_{5}l})\le\exp(-e^{\xi l}), \]
	completing the proof.
\end{proof}

\section{Proof of Lemma \ref{lem:newcct} \label{sec:twocct}}

The proof of Lemma \ref{lem:newcct} uses a two step analysis. First for block $B_{x,l}$ with sufficiently large $l$, we consider the measure $\mu_{B_{x,l}}^{*,\tau}$ induced on the upper half of block $B_{x,l/2}$. Denote the following subset of $\Omega_{B_{x,l}}^{\tau}$,
\[ A_{\tau}=\{\sigma\in\Omega_{B_{x,l}}^{\tau},\forall x\in L_{x,l/2+2},x\mbox{ is free w.r.t. }\sigma\}. \]
$A_{\tau}$ can be considered as the set of ``good'' configurations with boundary condition $\tau$. As we will show later, under connectivity condition $\mathcal{C}$, $\mu_{B_{x,l}}^{*,\tau}(A_{\tau})$ is close to $1$ with high probability. And as the following lemma claims, conditioning on $A_{\tau}$ and the configuration on boundary $L_{x,l/2}$, the measure induced by $\mu_{B_{x,l}}^{*,\tau}$ actually equals to $\mu^{c}$. Therefore, as a second step we can apply the result of Lemma \ref{lem:originalcct} to $B_{x,l/2}$. Let $\Omega_{L_{x,l/2}}$ be the set of possible configuration on $L_{x,l/2}$.
\begin{lem}
	\label{lem:condindep}Suppose $M$ satisfies the connectivity condition $\mathcal{C}$. For arbitrary $\tau\in\Omega_{T_{x}}^{c}$, $\eta\in\Omega_{L_{x,l/2}}$ and state $c'\in[k]$ that is compatible with $c$,
	\begin{equation}
		\mu_{B_{x,l}}^{*,\tau}(\sigma_{x}=c'\mid\sigma_{L_{x,l/2}}=\eta,\sigma\in A_{\tau})=\mu^{c}(\sigma_{x}=c'\mid\sigma_{L_{x,l/2}}=\eta).\label{eq:condindep}
	\end{equation}
\end{lem}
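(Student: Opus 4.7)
The strategy is to reduce \eqref{eq:condindep} to a product-structure claim on the restricted set $\Omega^{*,\tau}_{B_{x,l}}\cap\{\sigma_{L_{x,l/2}}=\eta\}\cap A_\tau$ and then invoke the tree Markov property. Every $\sigma\in\Omega^\tau_{B_{x,l}}$ with $\sigma_{L_{x,l/2}}=\eta$ splits canonically as a pair $(\sigma^U,\sigma^L)$, with $\sigma^U$ the restriction to $B_{x,l/2}$ (including the fixed layer $\eta$) and $\sigma^L$ the restriction to the subtrees rooted at $L_{x,l/2+1}$. Let $\mu^{c,\eta}_{B_{x,l/2}}$ denote the Gibbs measure on $B_{x,l/2}$ with parent-of-root $c$ and boundary $\eta$ at $L_{x,l/2}$, and let $\Omega^{c,\eta}_{B_{x,l/2}}$ denote its support. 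By the tree Markov property at $L_{x,l/2}$,
\[
\mu^\tau_{B_{x,l}}(\,\cdot\mid\sigma_{L_{x,l/2}}=\eta) \;=\; \mu^{c,\eta}_{B_{x,l/2}}\otimes\nu
\]
for some measure $\nu$ on lower halves, and a second application of the Markov property gives $\mu^{c,\eta}_{B_{x,l/2}}(\sigma_x=c')=\mu^c(\sigma_x=c'\mid\sigma_{L_{x,l/2}}=\eta)$, the RHS of \eqref{eq:condindep}. Note also that $A_\tau$ is a function of $\sigma^L$ alone, because freeness of $v\in L_{x,l/2+2}$ depends only on $\sigma|_{T_v}$ and on $\tau$ at the leaves of $T_v$.

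The identity \eqref{eq:condindep} will then follow from the product structure
\[
\Omega^{*,\tau}_{B_{x,l}}\cap\{\sigma_{L_{x,l/2}}=\eta\}\cap A_\tau \;=\; \Omega^{c,\eta}_{B_{x,l/2}}\times L_\tau
\]
for some subset $L_\tau$ of admissible lower halves. Indeed, under this form the conditioning restricts $\sigma^L$ to $L_\tau$ while leaving $\sigma^U$ free over $\Omega^{c,\eta}_{B_{x,l/2}}$, so independence preserves the $\sigma^U$-marginal $\mu^{c,\eta}_{B_{x,l/2}}$ and the marginal of $\sigma_x$ is the desired one. The product structure reduces in turn to the following \emph{connectivity assertion}: whenever $\sigma^L$ satisfies $A_\tau$ and $(\sigma^U_0,\sigma^L)\in\Omega^{*,\tau}_{B_{x,l}}$ for some $\sigma^U_0\in\Omega^{c,\eta}_{B_{x,l/2}}$, every $\sigma^U\in\Omega^{c,\eta}_{B_{x,l/2}}$ yields $(\sigma^U,\sigma^L)\in\Omega^{*,\tau}_{B_{x,l}}$.

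To prove the assertion I would build an explicit single-site Glauber path in $\Omega^\tau_{B_{x,l}}$ from $(\sigma^U_0,\sigma^L)$ to $(\sigma^U,\sigma^L)$ in two stages. First, apply Lemma~\ref{lem:freebc} to the slightly larger block $T':=B_{x,l/2+1}$ with parent-of-root $c$: Glauber on $\Omega^c_{T'}$ is irreducible, so a single-site path $\pi$ exists on $T'$ between the two restrictions (which already agree on $L_{x,l/2+1}$). Second, lift $\pi$ to $\Omega^\tau_{B_{x,l}}$: moves at interior vertices of $T'$ lift verbatim, while a move at a leaf $u\in L_{x,l/2+1}$ may conflict with the children of $u$ at $L_{x,l/2+2}$ that $\sigma^L$ has fixed. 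Here $A_\tau$ enters: each such child $v$ is free, so one uses condition $\mathcal{C}(1)$ to select an intermediate state jointly compatible with $u$'s old spin, $u$'s new spin, and $v$'s fixed descendants, invokes the freeness of $v$ to place $\sigma_v$ at that intermediate state via a Glauber excursion in $T_v$, executes the move at $u$, and then closes out the excursion on $T_v$.

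\textbf{Main obstacle.} The delicate step is the lifting just described. Freeness of $v$ is a property of $\sigma|_{T_v}$ and need not persist through intermediate updates, so each leaf-update of $\pi$ must be processed atomically --- one child's subtree at a time, with the excursion on $T_v$ restored before moving to the next update --- so that the remaining children stay free throughout the lift. Condition $\mathcal{C}(1)$ is used precisely to guarantee that a single intermediate state compatible simultaneously with both spins of $u$ and with $v$'s fixed neighbours exists. Once this bookkeeping is in place, the rest of the argument is the clean independence computation of the first two paragraphs.
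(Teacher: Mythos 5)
Your reduction in the first two paragraphs is sound and is essentially the paper's: the Markov property at $L_{x,l/2}$, plus the fact that $A_\tau$ depends only on the lower half, reduce \eqref{eq:condindep} to the assertion that, on $A_\tau$ and with the lower half fixed, membership in the connected component of $\tau$ does not depend on the upper half (this is exactly Claim \ref{lem:freetoconnect}). The gap is in your proof of that connectivity assertion, precisely at the step you flag as the main obstacle. After you use freeness of a child $v\in L_{x,l/2+2}$ to move it to an intermediate state $c_3$ compatible with $c_1$ and $c_2$ and then update the leaf $u\in L_{x,l/2+1}$ from $c_1$ to $c_2$, you propose to ``close out the excursion on $T_v$'', i.e.\ restore $T_v$ to $\sigma^L|_{T_v}$ before the next leaf-update. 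But restoring $v$ itself requires setting $\sigma_v$ back to its original value while $u$ holds its new spin $c_2$, and nothing guarantees that $\sigma^{L}_v$ is compatible with $c_2$ (in the coloring model one may simply have $\sigma^{L}_v=c_2$), so the restoration can be blocked. If you skip the restoration, the freeness of $v$ --- a property of the original configuration on $T_v$ --- is no longer available for the next update of $u$, which is exactly the difficulty your atomic bookkeeping was meant to avoid. Since the path $\pi$ on $T'=B_{x,l/2+1}$ may update its leaves arbitrarily many times through arbitrary spins, there is also no single ``parked'' state for $v$ guaranteed compatible with all of them under condition $\mathcal{C}(1)$ alone, so the problem is not repaired by merely delaying the restoration.

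The paper's construction avoids this by running the free-boundary path on $B_{x,l/2}$ (leaves at level $l/2$, not $l/2+1$) and using freeness only once per vertex, before the path starts: every $y\in L_{x,l/2+2}$ is parked at the fixed state $\eta_{p(p(y))}$ of its grandparent $w\in L_{x,l/2}$, and stays there for the entire path. The level-$(l/2+1)$ vertices then serve as the flexible helpers: whenever the path changes some $w\in L_{x,l/2}$ from $c_1$ to $c_2$, condition $\mathcal{C}(1)$ supplies a state $c_3$ compatible with the three states $c_1,c_2,\eta_w$; the children of $w$ are set to $c_3$ in single moves (no freeness needed, because their own children all sit at $\eta_w$), and everything below level $l/2$ is restored only at the very end, when the level-$l/2$ spins have returned to $\eta$ and the original compatibilities are available again. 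To make your lifting argument work you would need this ``park the grandchildren at a state that stays valid for the whole path'' idea, or a substitute for it. Note also a smaller imprecision: $\mathcal{C}(1)$ produces a state compatible with three prescribed spins, not with an arbitrary collection of ``fixed descendants''; in the correct use the intermediate state only needs to clear the old spin, the new spin, and the single parked state $\eta_w$, while compatibility inside $T_v$ is handled by the excursion itself.
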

\begin{proof}
	For convenience of notation, abbreviate $\sigma_{(1)}=\sigma_{B_{x,l/2-1}}$, $\sigma_{(2)}=\sigma_{B_{x,l}\backslash B_{x,l/2}}$, so every configuration $\sigma\in\Omega_{B_{x,l}}$ can be written as a three tuple $(\sigma_{(1)},\eta,\sigma_{(2)})$. We of course have that $\sigma_{(1)},\sigma_{(2)}$ are conditionally independent given $\sigma_{L_{x,l/2}}=\eta$. By the definition of $A_{\tau}$, $\{\sigma\in A_{\tau}\}$ only depends on $\sigma_{(2)}$. Therefore to show (\ref{eq:condindep}), it is enough to show that conditioned on $\sigma_{L_{x,l/2}}$ and $\sigma\in A_{\tau}$, $\sigma\sim\tau$ is independent of $\sigma_{(1)}.$ From there we have
	\begin{align*}
		\mu_{B_{x,l}}^{*,\tau}(\sigma_{x}=c'\mid\sigma_{L_{x,l/2}}=\eta,\sigma\in A_{\tau}) & =\mu^{c}(\sigma_{x}=c'\mid\sigma_{L_{x,l/2}}=\eta,\sigma\sim\tau,\sigma\in A_{\tau})\\
		& =\mu^{c}(\sigma_{x}=c'\mid\sigma_{L_{x,l/2}}=\eta,\sigma\in A_{\tau})=\mu^{c}(\sigma_{x}=c'\mid\sigma_{L_{x,l/2}}=\eta).
	\end{align*}
	Since ``$\sim$'' is a transitive relation, the conditional independence of $\sigma\sim\tau$ and $\sigma_{(1)}$ follows from the following claim.
\end{proof}
\begin{claim}
	\label{lem:freetoconnect} Suppose $M$ satisfies the connectivity condition $\mathcal{C}$. For each $\tau\in\Omega_{T_{x}}^{c}$ and $\eta\in\Omega_{L_{x,l/2}}$, and for all $\sigma=(\sigma_{(1)},\eta,\sigma_{(2)})$, $\sigma'=(\sigma'_{(1)},\eta,\sigma_{(2)})\in\Omega_{B_{x,l}}^{\tau}$ if $\sigma,\sigma'\in A_{\tau}$, then $\sigma\sim\sigma'$.
\end{claim}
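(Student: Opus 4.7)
The plan is to show $\sigma\sim\sigma'$ by constructing an explicit Glauber path in $\Omega_{B_{x,l}}^{\tau}$, routing through suitable intermediate configurations. Since $\sigma$ and $\sigma'$ already agree on $\eta=\sigma_{L_{x,l/2}}$ and on the lower half $\sigma_{(2)}$, only the upper half needs to be modified; the difficulty is that the upper-half Glauber dynamics on $B_{x,l/2-1}$ with the fixed boundary $\eta$ need not be connected. I would overcome this by exploiting the freeness of $L_{x,l/2+2}$ (guaranteed by $\sigma,\sigma'\in A_{\tau}$) to propagate flexibility upward until $L_{x,l/2}$ behaves like a free boundary for the upper half, at which point a Lemma~\ref{lem:freebc}-style inductive argument can be applied.

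The first step is a propagation-of-freeness lemma. For each $v\in L_{x,l/2+1}$ with parent $p_v\in L_{x,l/2}$ at state $\eta_{p_v}$, I would show that $v$ can be updated in $B_{x,l}$ to any state $c$ compatible with $\eta_{p_v}$: first use the freeness of each child $y_i\in L_{x,l/2+2}$ of $v$ to move $y_i$ to an auxiliary state that is compatible both with $\sigma_v$ (so that the freeness path on $T_{y_i}$ remains valid when executed inside $B_{x,l}$, where $v$ is still at $\sigma_v$) and with $c$ (so that $v$'s subsequent single-site update to $c$ is valid). Condition~1 of $\mathcal{C}$ supplies such an auxiliary state for the triple $(\sigma_v,c,\eta_{p_v})$. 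Iterating this construction one layer further shows that each $u\in L_{x,l/2}$ can likewise be moved to any state compatible with its current parent in $L_{x,l/2-1}$, with the liberated $L_{x,l/2+1}$ now serving as the freeness resource.

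The second step is to imitate the inductive proof of Lemma~\ref{lem:freebc} on the upper half, treating $L_{x,l/2}$ as effectively free by means of the first step. Starting from $\sigma$ I transform the upper half into $\sigma'_{(1)}$ vertex by vertex: to perform an update on some $u$ in the upper half, I first adjust the relevant vertices of $L_{x,l/2}$ (together with the corresponding vertices of $L_{x,l/2+1}$ and $L_{x,l/2+2}$) via the propagation construction above so that the desired upper-half move becomes valid, execute the move on $u$, and then reverse the temporary adjustments. Because the freeness of each $y\in L_{x,l/2+2}$ is a property of $\sigma|_{T_y}$ alone, it is invariant under moves outside $T_y$; the freeness ``resource'' therefore remains available throughout, and every temporary modification can be reversed symmetrically. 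Once the upper half equals $\sigma'_{(1)}$ and all deeper temporary modifications have been undone, we have reached $\sigma'$.

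The main obstacle will be the bookkeeping of the restoration phase: each single-site update in the upper half induces a nested sequence of temporary changes on $L_{x,l/2}$, $L_{x,l/2+1}$, and $L_{x,l/2+2}$, and we must ensure that by the end every vertex on these three layers has returned to its original $\sigma_{(2)}$-value, even though the upper half differs between start and end. The key invariant making this work is the observation that freeness of $L_{x,l/2+2}$ depends only on $\sigma$ restricted to $T_y$, hence persists under all upper-half changes; combined with condition~1 of $\mathcal{C}$, which supplies simultaneously-compatible auxiliary states at each interface, this gives a clean recursion. A secondary subtlety is to order the layer-by-layer updates so that auxiliary modifications on disjoint subtrees $T_{y_i}$ do not interfere, which is handled by processing the children of each $v\in L_{x,l/2+1}$ sequentially and restoring after each batch.
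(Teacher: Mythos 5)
Your overall skeleton is the right one (reduce to the irreducibility of Lemma~\ref{lem:freebc} on the top block, use the freeness of $L_{x,l/2+2}$ plus condition~1 of $\mathcal{C}$ to unblock moves at the interface, and restore the lower configuration), but the step you yourself flag as ``the main obstacle'' --- the restoration bookkeeping --- is not actually resolved, and the scheme you propose fails at concrete moves. If you reverse the temporary adjustments immediately after each upper-half update, the reversal is blocked exactly in the cases where the adjustment was needed: after you move $u$ (at level $l/2-1$) to a state $b$ that is incompatible with $\eta_{w}$ for some child $w\in L_{x,l/2}$, you cannot move $w$ back to $\eta_{w}$, since that would create an improper configuration; likewise, reversing the freeness path on $T_{y_i}$ right after moving its parent $v$ to the auxiliary state $c_j$ requires $\sigma_{y_i}$ to be compatible with $c_j$, and condition~1 only guarantees a state compatible with \emph{three} prescribed spins --- it cannot simultaneously accommodate the (up to $d$) original values $\sigma_{y_i}$ of the children. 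If instead you postpone the restorations, then your key invariant collapses: freeness of $y\in L_{x,l/2+2}$ is a property of $\sigma|_{T_y}$, and it is destroyed precisely by the moves \emph{inside} $T_y$ that you used to liberate $v$; so the ``freeness resource'' is consumed after one use, and the later re-adjustments of level $l/2+1$ (needed every time a vertex of $L_{x,l/2}$ must change again) are no longer justified.

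The paper's construction contains the device your argument is missing. Each $y\in L_{x,l/2+2}$ uses its freeness exactly once, at the very beginning, to be parked at the colour $\eta_{w}$ of its grandparent $w\in L_{x,l/2}$; after that, levels $\ge l/2+2$ are never touched until the end. Because all grandchildren of $w$ now share the single colour $\eta_w$, every subsequent unblocking of a move of $w$ from $c_1$ to $c_2$ is achieved by plain single-site updates of $w$'s children to one common colour $c_3$ compatible with $c_1,c_2,\eta_w$ --- exactly the three-state guarantee of condition~1, with no further recursion into the subtrees. Moreover the connecting path is taken on $B_{x,l/2}$ (including $L_{x,l/2}$, with free boundary below), whose two endpoints both carry $\eta$ on $L_{x,l/2}$, so no per-move restoration of $\eta$ is ever attempted; all restorations are done only after the path is finished, when $L_{x,l/2}$ has returned to $\eta$ and every reverse move is automatically compatible with the original $\sigma_{(2)}$. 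Without this ``park once at the grandparent's colour, restore only at the end'' mechanism, your nested adjust-and-reverse recursion does not go through.
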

\begin{proof}
	By Lemma \ref{lem:freebc}, there exist a path $\Gamma$ connecting $\sigma_{(1)}$ to $\sigma'_{(1)}$ in $\Omega_{B_{x,l/2}}^{c}$, i.e. the Glauber dynamics on $B_{x,l/2}$ with free boundary condition on leaves and the state of $p(x)$ fixed to be $c$. We will construct a path $\Gamma'$ in $\Omega_{B_{x,l}}^{\tau}$ connecting $\sigma$ to $\sigma'$ by adding steps between steps of $\Gamma$ which only changes the configuration on $B_{x,l}\backslash B_{x,l/2}$, such that vertices in $L_{x,l/2+1}$ won't block the move in $\Gamma$ and after finishing $\Gamma$, we can change the configuration on $B_{x,l}\backslash B_{x,l/2}$ back to the original $\sigma_{(2)}$. The construction of $\Gamma'$ is specified below:
	
	(1) {\bf Before starting $\Gamma$.} For each $y\in T$, let $p(y)$ denote the parent of $y$. For each $y\in L_{x,l/2+2}$, $\sigma\in A_{\tau}$ implies that there exist a path $\Gamma_{y}$ in $T_{y}$ changing $y$ from $\sigma_{y}$ to $\sigma_{p(p(y))}=\eta_{p(p(y))}$ in one step. To see $\Gamma_{y}$ is also a connected path in $B_{x,l}$, we have to show that the parent of $y$ won't block $\Gamma_{y}$. The only neighbor of $p(y)$ in $T_{y}$ is $y$ and the only move involving $y$ in $\Gamma_{y}$ is the last step changing $y$ from $\sigma_{y}$ to $\sigma_{p(p(y))}$. The value of $p(y)$ won't block this last step because $\sigma_{p(y)}$ is compatible with both $\sigma_{y}$ and $\sigma_{p(p(y))}$ (they are states of neighbouring vertices in $\sigma$). Now we will follow each $\Gamma_{y}$ for all $y\in L_{x+l/2+2}$ and change $\sigma_{y}$ to $\sigma_{p(p(y))}.$ After that, for each $w\in L_{x,l/2}$, all vertices in $L_{w,2}$ are in state $\sigma_{w}=\eta_{w}$. The configuration on and below $L_{x,l/2+2}$ will henceforth remain fixed until we finish $\Gamma$.
	
	(2) {\bf Performing $\Gamma$}. For each step in $\Gamma$, the existence of $B_{x,l-1}\backslash B_{x,l/2}$ might block this move only if it changes the state of some vertex $w\in L_{x,l/2}$. Suppose it is changes $w$ from $c_{1}$ to $c_{2}$, remember in the construction above, all vertices in $L_{w,2}$ have states $\eta_{w}$. By part 1 of $\mathcal{C}$, we can find $c_{3}\in[k]$ which is compatible with $c_{1},c_{2}$ and $\eta_{w}$. Now in order to change $w$ from $c_{1}$ to $c_{2}$, it suffices to first change the state of every vertex $z\in L_{w,1}$ to $c_{3}$, and then change $w$ from $c_{1}$ to $c_{2}$. This construction keeps the configuration on and below $L_{x,l/2+2}$ unchanged.
	
	(3) {\bf After $\Gamma$.} The configuration in $B_{x,l/2}$ is $(\sigma'_{(1)},\eta)$. We can change every vertex $z\in L_{x,l/2+1}$ back to $\sigma'_{z}=\sigma_{z}$ because at this moment its parent $p(z)\in L_{x,l/2}$ and all children of $z$ in $L_{z,1}$ have state $\eta_{p(z)}=\sigma_{p(z)}$, which is compatible with $\sigma_{z}$. From there, we can reverse the path $\Gamma_{y}$ for each $y\in L_{x,l/2+2}$ and change the configuration on and below $L_{x,l/2+2}$ back to the original configuration $\sigma_{(2)}$. This completes the construction achieving $\sigma'_{(2)}=\sigma_{(2)}$.
\end{proof}

Note Claim \ref{lem:freetoconnect} combined with Lemma \ref{lem:cnctcond} below implies that, with high probability (i.e.~on $A_{\tau}$), the fixed boundary Glauber dynamics on $B_{x,l/2}$ is actually connected as a subgraph of the Glauber dynamic on larger block $B_{x,l}$. This is one part of the proof where connectivity condition is used. We may replace the present connectivity condition by a more general assumption that the probability that the fixed boundary Glauber dynamics on $B_{x,l}$ is not connected in a larger block $B_{x,l'}$ decays double exponentially fast in block size $l$.

\begin{lem}
	\label{lem:cnctcond} The connectivity condition $\mathcal{C}$ implies that there exist constants $C_{1}>1$, $C_{2}>0$ such that for all $l\ge 1$,
	\begin{equation}
		1-p_{l}^\mathrm{free}\le C_{2}\exp(-C_{1}^{l}).\label{eq:cnctcond}
	\end{equation}
\end{lem}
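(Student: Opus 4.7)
My plan is to establish a quadratic recursion $q_l \le A\, q_{l-2}^2$ for $q_l := 1 - p_l^{\mathrm{free}}$, with constant $A = A(d,\pi_{\min})$, and then bootstrap using $q_l \to 0$ from condition (2) of $\mathcal{C}$. Once $l_0$ is picked large enough that $A q_l \le 1/2$ for all $l \ge l_0$, the recursion gives by induction $q_l \le A^{-1} 2^{-2^{\lfloor (l-l_0)/2\rfloor}}$, which is the desired $C_2 \exp(-C_1^l)$ bound for any $C_1 \in (1, \sqrt{2})$ and $C_2$ chosen to absorb small-$l$ behavior.

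The heart of the argument is the geometric claim: \emph{under $\mathcal{C}$, if at most one grandchild $y$ of $\rho$ fails to be free in $T_y$, then $\rho$ itself is free in $T$.} Let $y_*$ denote that (possibly nonexistent) bad grandchild, with parent $x_*$. Given a target $c' \neq c = \sigma_\rho$, I would invoke condition (1) of $\mathcal{C}$ to pick $c_*^* \in [k]$ simultaneously compatible with $c$, $c'$ and $\sigma_{y_*}$. For each sibling $y_j \neq y_*$ under $x_*$, choose $d_j$ compatible with $\sigma_{x_*}$ and $c_*^*$; since $y_j$ is free in $T_{y_j}$, drive $y_j$ to state $d_j$ via the associated internal path, which touches $y_j$ only in its last step and is valid in the ambient tree because $d_j$ is compatible with the only outside neighbor $\sigma_{x_*}$. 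After processing all such $y_j$, every neighbor of $x_*$ is compatible with $c_*^*$, so a single move updates $x_*$ to $c_*^*$. For each other child $x_i \neq x_*$, all grandchildren under $x_i$ are free, and the analogous two-level manipulation sends $x_i$ to some $c_i^*$ compatible with $c$ and $c'$. Finally flip $\rho$ from $c$ to $c'$; this is valid because every $c_i^*$ and $c_*^*$ is compatible with $c'$. The leaves of $T$ stay fixed throughout and $\rho$ is touched only at the final step, so $\rho$ can reach $c'$.

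For the recursion, let $F_y = \{y \text{ is free in } T_y\}$ and $q_{l-2}(a) := \Pr(F_y^c \mid \sigma_y = a)$. Since marginally $\sigma_y \sim \pi$, we have $\sum_a \pi_a q_{l-2}(a) = q_{l-2}$ and hence $\max_a q_{l-2}(a) \le q_{l-2}/\pi_{\min}$. For any two grandchildren $y, y'$ of $\rho$, their least common ancestor $z$ lies at level at most $1$, and the configurations $\sigma_{T_y}, \sigma_{T_{y'}}$ are conditionally independent given $\sigma_z$. Hence
\[
\Pr(F_y^c \cap F_{y'}^c) = \mathbb{E}\bigl[\Pr(F_y^c \mid \sigma_z)\,\Pr(F_{y'}^c \mid \sigma_z)\bigr] \le \bigl(\max_a \Pr(F_y^c \mid \sigma_z = a)\bigr)\,q_{l-2} \le \frac{q_{l-2}^{\,2}}{\pi_{\min}}.
\]
A union bound over the $\binom{d^2}{2}$ pairs of grandchildren gives $q_l \le A\, q_{l-2}^2$ with $A = \binom{d^2}{2}/\pi_{\min}$.

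The main obstacle is the combinatorial bookkeeping in the geometric claim: each single-site move in the constructed path must simultaneously respect every neighbor's current spin, including that of the immovable bad grandchild $y_*$. This is exactly where the three-way compatibility in condition (1) of $\mathcal{C}$ is indispensable (a common state compatible with $c$, $c'$, $\sigma_{y_*}$ is required), and the argument parallels the move-by-move verification used in the proof of Claim~\ref{lem:freetoconnect}.
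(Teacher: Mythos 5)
Your proof is correct and follows essentially the same route as the paper: the same two-level geometric claim (at most one non-free grandchild forces the vertex to be free, using the three-way compatibility of condition (1) of $\mathcal{C}$) followed by the quadratic recursion bootstrapped from $p_{l}^{\mathrm{free}}\to1$. The only difference is your explicit $\pi_{\min}$ factor handling the dependence between grandchild subtrees, a point the paper's bound $\binom{d^{2}}{2}(1-p_{l-2}^{\mathrm{free}})^{2}$ glosses over; this is a minor refinement rather than a different approach.
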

\begin{proof}
	Fix $x\in T$ and $\sigma\in\Omega_{T_{x}}$. First if for all $1\leq i \leq d$, $z_i\in L_{x,1}$ is free, then $x$ is also free. To see that, for any $ c\in[k]$, by connectivity condition there exists $c'\in[k]$ such that $c'$ is compatible with both $c$ and $\sigma_{x}$, we can first change all $z_i$ to $c'$ in one step and then change $x$ from $\sigma_{x}$ to $c$ as the final step.
	
	Now consider $y_{ij}\in L_{z_i,1}\subset L_{x,2}$ for $1\leq i,j \leq d$. If at most one of the $y_{ij}$'s is not free, say $y_{11}\in L_{z_{1},1}$, then for $i\neq1$, $z_i$ is free and $z_{1}$ can change in one step to all states compatible with $\sigma_{y_{11}}$. Again by $\mathcal{C}$, for all $c\in[k]$ there exists $c'\in[k]$ such that $c'$ is compatible with $c$, $\sigma_{x}$ and $\sigma_{y_{11}}$. By the construction above, we can change $x$ from $\sigma_{x}$ to $c$ in one step, hence $x$ is also free.
	
	This implies if $x$ is not free, then there exist at least two $y_{ij}\in L_{x,2}$ that are not free. By part 2 of $\mathcal{C}$, there exists $l_{0}>0$, such that for all $l>l_{0}$ we have $1-p_{l}^\mathrm{free}<\frac{1}{d^{8}}$ and hence
	\[ 1-p_{l}^\mathrm{free}\le\binom{d^{2}}{2}(1-p_{l-2}^\mathrm{free})^{2}\le d^{4}(1-p_{l-2}^\mathrm{free})^{2}\le(1-p_{l-2}^\mathrm{free})^{1.5}. \]
	By induction, $1-p_{l}^\mathrm{free}\le(1-p_{l_{0}}^\mathrm{free})^{(1.5)^{(l-l_{0})/2}}$ which completes the proof.
\end{proof}

Now we can finish the proof of Lemma \ref{lem:newcct} from which Theorem \ref{thm:thm1} follows immediately.
\begin{proof}
	[Proof of Lemma \ref{lem:newcct}] Let $C=\alpha(l/2+1-\delta)^{2}/[(1-\delta)^{2}\mu^{c}(\sigma_{x}=c')]$ be the number on the left hand side of (\ref{eq:originalcct}). It is enough to show for some constant $l_1\ge2l_{0}$, and some coefficient $K\ge1$, for all $l\ge l_1$
	\[ \Pr\nolimits_{\tau\sim\mu^{c}}\left(\left|\mu^{c}(\sigma_{x}=c'\mid\sigma\sim\tau)-\mu^{c}(\sigma_{x}=c')\right|\ge\frac{K}{C}\right)\le e^{-2C/K}. \]
	To see the sufficiency, note this is just equation (\ref{eq:newcct}) with $\delta_1$ satisfying $1-\delta_1=\frac{1}{4K}(1-\delta)$.
	
	Recall $A_{\tau}=\{\sigma\in\Omega_{B_{x,l}}^{\tau} : \forall x\in L_{x,l/2+2},x\mbox{ is free in }\sigma\}$. Lemma \ref{lem:cnctcond} implies that for some constant $C_{1}>1$, $C_{2}>0$, and $l\ge1$
	\begin{align*}
		\mathbb{E}_{\tau\sim\mu^{c}}(\mu_{B_{x,l}}^{*,\tau}(A_{\tau}^{c})) & =\mathbb{E}_{\tau\sim\mu^{c}}(\mu^{c}(\sigma\notin A_{\tau}\mid\sigma\sim\tau))\\
		& =\Pr\nolimits_{\sigma\sim\mu^{c}}(\exists y\in L_{x,l/2+2},y\mbox{ is not free })\\
		& \le d^{l/2+2}(1-p_{l/2-2}^\mathrm{free})\le C_{2}d^{l/2+2}\exp(-C_{1}^{l/2-2}).
	\end{align*}
	By Markov inequality,
	\begin{equation}
		\Pr\nolimits_{\tau\sim\mu^{c}}\left(\mu_{B_{x,l}}^{*,\tau}(A_{\tau}^{c})>\frac{1}{2C}\right)\le2C\mathbb{E}_{\tau\sim\mu^{c}}(\mu_{B_{x,l}}^{*,\tau}(A_{\tau}^{c}))\le Cd^{l/2+2}C_{2}\exp(-C_{1}^{l/2-2})\to0,\label{eq:hppart1}
	\end{equation}
	as $l\to \infty$. On the event $\{\tau : \mu_{B_{x,l}}^{*,\tau}(A_{\tau}^{c})\le\frac{1}{2C}\}$,
	\begin{align}
		\mu_{B_{x,l}}^{*,\tau}(\sigma_{x}=c'\mid\sigma\in A_{\tau}) & =\frac{\mu_{B_{x,l}}^{*,\tau}(\sigma_{x}=c',\sigma\in A_{\tau})}{\mu_{B_{x,l}}^{*,\tau}(\sigma\in A_{\tau})}\le\frac{1}{1-\frac{1}{2C}}\mu_{B_{x,l}}^{*,\tau}(\sigma_{x}=c',\sigma\in A_{\tau})\nonumber \\
		& \le(1+\frac{1}{C})\left(\mu_{B_{x,l}}^{*,\tau}(\sigma_{x}=c')+\frac{1}{C}\right)\le\mu_{B_{x,l}}^{*,\tau}(\sigma_{x}=c')+\frac{3}{C}\nonumber \\
		\mu_{B_{x,l}}^{*,\tau}(\sigma_{x}=c'\mid\sigma\in A_{\tau}) & \ge\mu_{B_{x,l}}^{*,\tau}(\sigma_{x}=c',\sigma\in A_{\tau})\ge\mu_{B_{x,l}}^{*,\tau}(\sigma_{x}=c')-\frac{1}{C}.\label{eq:condonAtau}
	\end{align}
	Combining the two results together we have
	\begin{equation}
		\left|\mu_{B_{x,l}}^{*,\tau}(\sigma_{x}=c')-\mu_{B_{x,l}}^{*,\tau}(\sigma_{x}=c'\mid\sigma\in A_{\tau})\right|\le\frac{3}{C}.\label{eq:unbiapart2}
	\end{equation}
	
	Now splitting $\mu_{B_{x,l}}^{*,\tau}(\sigma_{x}=c'\mid\sigma\in A_{\tau})$ according to $\sigma_{L_{x,l/2}}$ and applying Lemma \ref{lem:condindep}, we have
	\begin{align}
		\mu_{B_{x,l}}^{*,\tau}(\sigma_{x}=c'\mid\sigma\in A_{\tau}) & =\sum_{\eta}\mu_{B_{x,l}}^{*,\tau}(\sigma_{x}=c'\mid\sigma\in A_{\tau},\sigma_{L_{x,l/2}}=\eta)\mu_{B_{x,l}}^{*,\tau}(\sigma_{L_{x,l/2}}=\eta\mid\sigma\in A_{\tau})\nonumber \\
		& =\sum_{\eta}\mu^{c}(\sigma_{x}=c'\mid\sigma_{L_{x,l/2}}=\eta)\mu_{B_{x,l}}^{*,\tau}(\sigma_{L_{x,l/2}}=\eta\mid\sigma\in A_{\tau}).\label{eq:condoneta}
	\end{align}
	We would like to estimate the set of $\eta$ such that $\mu^{c}(\sigma_{x}=c'\mid\sigma_{L_{x,l/2}}=\eta)$ has a large bias. Let $B=\{\eta : \left|\mu^{c}(\sigma_{x}=c'\mid\sigma_{L_{x,l/2}}=\eta)-\mu^{c}(\sigma_{x}=c')\right|\ge\frac{1}{C}\}$, Theorem \ref{lem:originalcct} implies that for $l/2\ge l_{0}$ and some $\delta>0$, we have $\Pr\nolimits_{\eta\sim\mu^{c}}\left(B\right)\le e^{-2C},$ where $\eta\sim\mu^{c}$ means the induced measure on $L_{x,l/2}$. Again by Markov's inequality,
	\begin{equation}
		\Pr\nolimits_{\tau\sim\mu^{c}}(\mu_{B_{x,l}}^{*,\tau}(\sigma_{L_{x,l/2}}\in B)>\frac{1}{C})\le C\mathbb{E}_{\tau\sim\mu^{c}}\mu_{B_{x,l}}^{*,\tau}(\sigma_{L_{x,l/2}}\in B)=C\mu^{c}(B)\le Ce^{-2C}.\label{eq:hppart2}
	\end{equation}
	On the event $\{\tau:\mu_{B_{x,l}}^{*,\tau}(\sigma_{L_{l/2}}\in B)\le\frac{1}{C}\}\cap\{\tau:\mu_{B_{x,l}}^{*,\tau}(A_{\tau}^{c})\le\frac{1}{2C}\}$, from (\ref{eq:condoneta}) we have
	\begin{align}
		\left|\mu_{B_{x,l}}^{*,\tau}(\sigma_{x}=c'\mid\sigma\in A_{\tau})-\mu^{c}(\sigma_{x}=c')\right| & \le\sum_{\eta}\left|\mu^{c}(\sigma_{x}=c'\mid\sigma_{L_{l/2}}=\eta)-\mu^{c}(\sigma_{x}=c')\right|\mu_{B_{x,l}}^{*,\tau}(\sigma_{L_{l/2}}=\eta\mid\sigma\in A_{\tau})\nonumber \\
		& \le\sum_{\eta\in B^{c}}\frac{1}{C}\mu_{B_{x,l}}^{*,\tau}(\sigma_{L_{l/2}}=\eta\mid\sigma\in A_{\tau})+\mu_{B_{x,l}}^{*,\tau}(\sigma_{L_{l/2}}\in B\mid\sigma\in A_{\tau})\nonumber \\
		& \le\frac{1}{C}\cdot 1+\frac{1}{C}+\frac{3}{C}=\frac{5}{C}\label{eq:unbiapart3}
	\end{align}
	where the last inequality follows from similar argument to (\ref{eq:condonAtau}).
	
	Combining the result of equations (\ref{eq:unbiapart2}) and (\ref{eq:unbiapart3}), on the event $\{\tau:\mu_{B_{x,l}}^{*,\tau}(\sigma_{L_{l/2}}\in B)\le\frac{1}{C}\}\cap\{\tau:\mu_{B_{x,l}}^{*,\tau}(A_{\tau}^{c})\le\frac{1}{2C}\}$, we have
	\[ \left|\mu_{B_{x,l}}^{*,\tau}(\sigma_{x}=c')-\mu^{c}(\sigma_{x}=c')\right|\le\frac{3}{C}+\frac{5}{C}=\frac{8}{C}. \]
	Therefore using the bounds from (\ref{eq:hppart1}) and (\ref{eq:hppart2}), for all $l\ge 2l_{0}$,
	\begin{align*}
		\Pr\nolimits_{\tau\sim\mu^{c}}\left(\left|\mu^{c}(\sigma_{x}=c'\mid\sigma\sim\tau)-\mu^{c}(\sigma_{x}=c')\right|>\frac{8}{C}\right) & \le\Pr\nolimits(\mu_{B_{x,l}}^{*,\tau}(\sigma_{L_{l/2}}\in B)\le\frac{1}{C})+\Pr\nolimits(\mu_{B_{x,l}}^{*,\tau}(A_{\tau}^{c})\le\frac{1}{2C})\\
		& \le Cd^{l/2+2}C_{2}\exp(-C_{1}^{l/2-2})+Ce^{-2C}\le e^{-16C}.
	\end{align*}
	where recall that $C\approx C'l^{2}$ for some constant $C'$ depending on $\delta$ and $\alpha$, the last step is true for some large enough constant $\tilde{l}$ depending on $d$, $C_{1}$, $C_{2}$ and $C'$. This means the strong concentration inequality (\ref{eq:newcct}) holds, for $K=8$, $\delta_1=1-\frac{1}{4K}(1-\delta)$ and $l_1=\max\{2l_{0},\tilde{l}\}$. Moreover, by taking $l$ large enough and changing the constant $C$ to $8C$ in (\ref{eq:unbiapart2}) and (\ref{eq:unbiapart3}), we can make $K$ arbitrarily close to $1$.
\end{proof}

\section{Component dynamics version of fast mixing results \label{sec:ProofofDror}}

In this section we prove Theorem \ref{thm:Dror5.3}. The theorem was originally proved for block dynamics in \cite{martinelli2004glauber}. Here we give a modification of their theorem adapted to the component dynamics by roughly ``adding stars'' at all occurrence of $B_{x,l}$. We will only state the key steps and refer the details to \cite{martinelli2004glauber}. For the remainder of this section, we let $\mu=\mu_{T}^{c},\Omega=\Omega_{T}^{c}$. Recall that $\tilde{T}_{x}=T_{x}\backslash\{x\}$. First we define the entropy mixing condition for Gibbs measure to be the following:
\begin{defn*}
	[Entropy Mixing] We say that $\mu$ satisfies $\mathrm{EM}^{*}(l,\epsilon)$ if for every $x\in T$, $\eta\in\Omega$ and any $f\ge 0$ that does not depend on the connected component of $B_{x,l}$, i.e. $f(\sigma)=\mu_{B_{x,l}}^{*,\sigma}(f),\forall\sigma\in\Omega$, we have $\mathrm{Ent}_{T_{x}}^{\eta}[\mu_{\tilde{T}_{x}}(f)]\le\epsilon\cdot\mathrm{Ent}_{T_{x}}^{\eta}(f)$ where $\mathrm{Ent}_{T_{x}}^{\eta}$ means the entropy w.r.t $\mu_{T_{x}}^{\eta}$.
\end{defn*}
Denote $p_{\min}=\min_{c,c'\in[k]}\{M(c,c'):M(c,c')>0\}$. By the Markov chain construction of configurations, we have $p_{\min}=\min_{x,c,c'}\{\mu_{T_{x}}^{c}(\sigma_{x}=c'),c,c'\mbox{ are compatible}\}$. The following theorem relates entropy mixing condition to the log-Soblev constant.

\begin{thm}\label{thm:Dror3.4}
	For any $l$ and $\delta>0$, if $\mu$ satisfies $\mathrm{EM}^{*}(l,[(1-\delta)p_{\min}/(l+1-\delta)]^{2})$ then $\mathrm{Ent}(f)\le\frac{2}{\delta}\cdot\mathcal{E}_{l}^{*}(f).$
\end{thm}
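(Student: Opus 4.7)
The plan is to adapt the proof of Theorem~3.4 of \cite{martinelli2004glauber} by systematically replacing every block conditional expectation $\mu_{B_{x,l}}$ with its starred counterpart $\mu_{B_{x,l}}^{*,\sigma}$, which restricts to the connected component of $\sigma$ inside $B_{x,l}$. The starting observation is that for each vertex $x$, the family $\{\Omega_{B_{x,l}}^{*,\sigma}\}_{\sigma}$ is a partition of the state space $\Omega$ which is measurable with respect to $\sigma_{T\setminus B_{x,l}}$, so the standard conditional entropy decomposition gives
\[ \mathrm{Ent}_{T_x}(f) = \mu_{T_x}\bigl[\mathrm{Ent}_{B_{x,l}}^{*,\sigma}(f)\bigr] + \mathrm{Ent}_{T_x}\bigl[\mu_{B_{x,l}}^{*,\sigma}(f)\bigr]. \]
Summing over $x$, the first term produces exactly $\mathcal{E}_l^*(f)$. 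The second term involves $g := \mu_{B_{x,l}}^{*,\sigma}(f)$, which by construction satisfies $g(\sigma) = \mu_{B_{x,l}}^{*,\sigma}(g)$ --- precisely the class of functions on which the hypothesis $\mathrm{EM}^*(l,\epsilon)$ is stated.

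Next I would unfold the entropy recursively from the root downward. Using the Markov property of the Gibbs measure --- conditional on $\sigma_x$ and the boundary of $T_x$, the subtrees $T_{x_1},\ldots,T_{x_d}$ rooted at the children of $x$ are independent --- a second application of the chain rule gives
\[ \mathrm{Ent}_{T_x}(g) = \sum_{i=1}^{d} \mu_{T_x}\bigl[\mathrm{Ent}_{T_{x_i}}(g)\bigr] + \mathrm{Ent}_{T_x}\bigl[\mu_{\tilde T_x}(g)\bigr]. \]
Because $g$ is block-measurable in the starred sense, $\mathrm{EM}^*(l,\epsilon)$ bounds the last term by $\epsilon\cdot\mathrm{Ent}_{T_x}(g)$. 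Iterating this inequality along the $l$ top levels of the tree, combined with the standard comparison between a single-site update at the root and the block dynamics (which accounts for the factor $p_{\min}^{-2}$), the residual entropies coming from the averaged $\mu_{\tilde T_x}(g)$ terms assemble into a geometric series. The calibration $\epsilon = [(1-\delta)p_{\min}/(l+1-\delta)]^2$ is engineered precisely so that this series converges to $2/\delta$, delivering the claimed constant.

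The main technical point --- and essentially the only substantive change from the MSW argument --- is verifying that the chain rule and entropy mixing step remain valid under the connected-component conditioning. Two facts need to be checked: first, whether two inside configurations are connected under single-site updates is determined solely by the boundary $\sigma_{\partial B_{x,l}}$, so the partition into connected components is measurable with respect to $\sigma_{T\setminus B_{x,l}}$, which legitimises the first chain rule; and second, the averaged function $g = \mu_{B_{x,l}}^{*,\sigma}(f)$ inherits the measurability $g(\sigma)=\mu_{B_{x,l}}^{*,\sigma}(g)$ required to invoke $\mathrm{EM}^*(l,\epsilon)$. Once these two points are established, the remaining calculations in \cite{martinelli2004glauber} transfer essentially line by line, with $\mu_{B_{x,l}}^{*,\sigma}$ playing the role of the ordinary block conditional expectation throughout, and the resulting bound is exactly $\mathrm{Ent}(f)\le\frac{2}{\delta}\mathcal{E}_l^*(f)$.
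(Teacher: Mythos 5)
Your plan is the same route the paper takes (rerun the argument of Theorem~3.4 of \cite{martinelli2004glauber} with $\mu_{B_{x,l}}^{*,\sigma}$ in place of the block conditional expectation), but as written it has two genuine gaps. First, the global decomposition is missing. The per-vertex identity $\mathrm{Ent}_{T_x}(f)=\mu_{T_x}[\mathrm{Ent}_{B_{x,l}}^{*}(f)]+\mathrm{Ent}_{T_x}[\mu_{B_{x,l}}^{*}(f)]$ is fine for each fixed $x$, but ``summing over $x$'' does not control $\mathrm{Ent}(f)$: there is no identity expressing the global entropy as $\sum_x\mathrm{Ent}_{T_x}(f)$, so the claim that ``the first term produces exactly $\mathcal{E}_l^*(f)$'' does not yield a bound on $\mathrm{Ent}(f)$. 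What is actually needed is the telescoping over levels, $\mathrm{Ent}(f)\le\sum_{x\in T}\mu[\mathrm{Ent}_{T_x}(\mu_{\tilde T_x}(f))]=:\mathrm{PEnt}(f)$, followed by a self-bounding estimate on $\mathrm{PEnt}$: applying the entropy-mixing input at each $x$ and re-telescoping produces terms $\mu[\mathrm{Ent}_{T_y}(\mu_{\tilde T_y}(f))]$ for $y$ ranging over the block below $x$, and since each vertex lies in at most $l$ such blocks, the coefficient $(1-\delta)/l$ per block gives $\mathrm{PEnt}(f)\le c\,\mathcal{E}_l^*(f)+(1-\delta)\mathrm{PEnt}(f)$, hence $\mathrm{Ent}(f)\le\frac{c}{\delta}\mathcal{E}_l^*(f)$ with $c\le 2$. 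Your ``geometric series'' is this inequality in disguise, but the $l$-fold overcounting bookkeeping that makes the ratio exactly $1-\delta$ is the heart of the computation and is absent from your sketch.

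Second, $\mathrm{EM}^*(l,\epsilon)$ applies only to functions that are starred-block-measurable at the vertex in question, and in your recursion the functions appearing at descendant vertices (e.g.\ $\mu_{B_{x,l}}^{*}(f)$ viewed at a child $x_i$, or $\mu_{\tilde T_x}(f)$ itself) are not of that form there. The step that upgrades $\mathrm{EM}^*(l,\epsilon)$ to arbitrary $f\ge 0$, at the price $\epsilon\mapsto\epsilon'=\sqrt{\epsilon}/p_{\min}$ --- the analog of Lemma~3.5(ii) of \cite{martinelli2004glauber}, i.e.\ Lemma~\ref{lem:Dror3.5} --- is precisely where the calibration $\epsilon=[(1-\delta)p_{\min}/(l+1-\delta)]^2$ comes from: it makes $\epsilon'/(1-\epsilon')=(1-\delta)/l$ and $1/(1-\epsilon')\le 2$. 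You allude to a ``standard comparison \dots{} which accounts for the factor $p_{\min}^{-2}$'' but never state or use this lemma, so the constant $2/\delta$ in your conclusion is asserted rather than derived. A minor further point: the component partition is not measurable with respect to $\sigma_{T\setminus B_{x,l}}$ alone (the cell containing $\sigma$ depends on $\sigma_{B_{x,l}}$ as well); what legitimises the chain rule is that $\mu_{B_{x,l}}^{*,\cdot}$ is the conditional expectation with respect to the $\sigma$-algebra generated by the outside spins together with the component label, equivalently $\sum_{\sigma'}\mu(\sigma')\mu_{B_{x,l}}^{*,\sigma'}(\sigma)=\mu(\sigma)$, as noted in the paper.
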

To show Theorem \ref{thm:Dror3.4}, we need the following modification of Lemma 3.5 (ii) of \cite{martinelli2004glauber}. The proof follows from its analog in \cite{martinelli2004glauber} immediately once we replace $\nu_{A},\mathrm{Ent}_{A},\nu_{B},\mathrm{Ent}_{B}$ there with $\nu_{\tilde{T}_{x}},\mathrm{Ent}_{\tilde{T}_{x}},\nu_{B_{x,l}}^{*},\mathrm{Ent}_{B_{x,l}}^{*}$ respectively.
\begin{lem}\label{lem:Dror3.5}
	For any $\epsilon<p_{\min}^{2},$ if $\mu$ satisfies $\mathrm{EM}^{*}(l,\epsilon)$ then for every $x\in T$, any $\eta\in\Omega$ and any $f\ge 0$ we have $\mathrm{Ent}_{T_{x}}^{\eta}[\mu_{\tilde{T}_{x}}(f)]\le\frac{1}{1-\epsilon'}\cdot\mu_{T_{x}}^{\eta}[\mathrm{Ent}_{B_{x,l}}^{*}(f)]+\frac{\epsilon'}{1-\epsilon'}\cdot\mu_{T_{x}}^{\eta}[\mathrm{Ent}_{\tilde{T_{x}}}(f)]$ with $\epsilon'=\sqrt{\epsilon}/p_{\min}.$
\end{lem}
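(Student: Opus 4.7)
The authors note that this is the starred analog of MSW Lemma~3.5(ii), obtained by substituting $\mu_{B_{x,l}}^{*}$ for $\mu_{B_{x,l}}$ throughout their proof. My plan is to record the three computational steps of the MSW argument in this notation and then flag the single place where the substitution requires justification.

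First, define $g(\sigma) := \mu_{B_{x,l}}^{*,\sigma}(f)$. By construction $g$ is a function of $\sigma_{T\setminus B_{x,l}}$ and the $\sim_{B_{x,l}}$-equivalence class of $\sigma$ alone, so $g(\sigma) = \mu_{B_{x,l}}^{*,\sigma}(g)$ holds identically. The hypothesis $\mathrm{EM}^{*}(l,\epsilon)$ therefore applies to $g$ and yields $\mathrm{Ent}_{T_x}^{\eta}(\mu_{\tilde{T}_x}(g)) \le \epsilon \cdot \mathrm{Ent}_{T_x}^{\eta}(g)$. Next, since $\mu_{B_{x,l}}^{*,\sigma}$ and $\mu_{\tilde{T}_x}$ are both honest conditional expectations (with respect to the sigma-algebras generated by $\sigma_{T\setminus B_{x,l}}$ together with the $\sim_{B_{x,l}}$-class, and by $\sigma_x$ and everything above $T_x$, respectively), the tower identity $\mathrm{Ent}(h) = \mathbb{E}[\mathrm{Ent}(h\mid\mathcal{F})] + \mathrm{Ent}(\mathbb{E}[h\mid\mathcal{F}])$ gives the two decompositions
\[
\mathrm{Ent}_{T_x}^{\eta}(f) = \mu_{T_x}^{\eta}[\mathrm{Ent}_{B_{x,l}}^{*}(f)] + \mathrm{Ent}_{T_x}^{\eta}(g), \qquad \mathrm{Ent}_{T_x}^{\eta}(f) = \mu_{T_x}^{\eta}[\mathrm{Ent}_{\tilde{T}_x}(f)] + \mathrm{Ent}_{T_x}^{\eta}(\mu_{\tilde{T}_x}(f)).
\]

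The third step is a pointwise comparison of $\mu_{\tilde{T}_x}(f)$ with $\mu_{\tilde{T}_x}(g)$. Writing $f = g + (f-g)$ and using that $f-g$ is centred with respect to $\mu_{B_{x,l}}^{*,\sigma}$, MSW's Cauchy–Schwarz estimate bounds the $\mu_{\tilde{T}_x}$-discrepancy by an $L^{2}(\mu_{B_{x,l}}^{*})$-variance of $f$, which is in turn controlled by $\mathrm{Ent}_{B_{x,l}}^{*}(f)$; the factor $p_{\min}^{-1}$ enters via the Radon–Nikodym derivative between $\mu_{T_x}^{\eta}$ and $\mu_{T_x}^{\eta}(\cdot\mid\sigma_x = c)$ at the single edge from $x$ to its child in $B_{x,l}$. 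Combining this comparison with the contraction on $g$ from the first step, and substituting the two tower identities to eliminate $\mathrm{Ent}_{T_x}^{\eta}(g)$ in favour of $\mathrm{Ent}_{T_x}^{\eta}(\mu_{\tilde{T}_x}(f))$ and $\mu_{T_x}^{\eta}[\mathrm{Ent}_{B_{x,l}}^{*}(f)]$, one arrives at
\[
\mathrm{Ent}_{T_x}^{\eta}(\mu_{\tilde{T}_x}(f)) \le \epsilon'\, \mathrm{Ent}_{T_x}^{\eta}(\mu_{\tilde{T}_x}(f)) + \mu_{T_x}^{\eta}[\mathrm{Ent}_{B_{x,l}}^{*}(f)] + \epsilon'\, \mu_{T_x}^{\eta}[\mathrm{Ent}_{\tilde{T}_x}(f)],
\]
with $\epsilon' = \sqrt{\epsilon}/p_{\min}$. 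The hypothesis $\epsilon < p_{\min}^{2}$ is exactly $\epsilon' < 1$, so rearranging gives the stated bound.

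\textbf{Main obstacle.} The one substantive check is that the variance/change-of-measure estimate in the third step survives passage to the starred setting. MSW's argument uses that $f-g$ is orthogonal to constants on each block relative to $\mu_{B_{x,l}}^{\sigma}$, whereas here the orthogonality holds only against the refined measure $\mu_{B_{x,l}}^{*,\sigma}$. However, any single configuration in a $\sim$-class has conditional mass at least $p_{\min}^{|B_{x,l}|}$, a constant depending only on $d$, $l$, and $M$, so the Radon–Nikodym factors in MSW's bookkeeping are affected only by bounded constants depending on $l$ and $M$, and their estimate transfers verbatim. This is why the result only requires $\epsilon < p_{\min}^{2}$ and no finer use of the connectivity structure.
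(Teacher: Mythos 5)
Your overall route is the same as the paper's: the lemma is obtained by rerunning the proof of Lemma 3.5(ii) of \cite{martinelli2004glauber} with the block measure $\mu_{B_{x,l}}$ replaced by $\mu_{B_{x,l}}^{*}$ throughout, and your first two steps isolate exactly the facts that make this substitution legitimate -- the $\sim_{B_{x,l}}$-classes (together with the configuration off the block) partition the state space, so $\mu_{B_{x,l}}^{*,\sigma}$ is an honest conditional expectation; the function $g=\mu_{B_{x,l}}^{*,\cdot}(f)$ satisfies $g=\mu_{B_{x,l}}^{*,\cdot}(g)$ and is therefore an admissible test function for $\mathrm{EM}^{*}(l,\epsilon)$; and the two tower decompositions of $\mathrm{Ent}_{T_x}^{\eta}(f)$ hold. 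Up to that point your sketch matches the intended argument.

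The problem is your resolution of what you call the main obstacle. You propose to repair the centering of $f-g$ by comparing $\mu_{B_{x,l}}^{\sigma}$ with $\mu_{B_{x,l}}^{*,\sigma}$ via the crude bound that each configuration in a $\sim$-class has conditional mass at least $p_{\min}^{|B_{x,l}|}$, and you then assert both that the resulting constants ``depend on $l$ and $M$'' and that the MSW estimate ``transfers verbatim.'' These two claims are incompatible, and the first, if it were genuinely needed, would be fatal: a factor of order $p_{\min}^{-|B_{x,l}|}$ grows doubly exponentially in $l$ and cannot be absorbed into $\epsilon'=\sqrt{\epsilon}/p_{\min}$, whose $l$-independence is precisely what Theorem \ref{thm:Dror3.4} exploits when it chooses $\epsilon=[(1-\delta)p_{\min}/(l+1-\delta)]^{2}$ to get $\epsilon'=(1-\delta)/(l+1-\delta)$. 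The correct observation is that no comparison between $\mu_{B_{x,l}}^{\sigma}$ and $\mu_{B_{x,l}}^{*,\sigma}$ is needed at all: once \emph{every} occurrence of the block measure in the MSW proof is starred, the orthogonality used there is exactly orthogonality of $f-g$ against $\mu_{B_{x,l}}^{*,\sigma}$, which holds by definition of $g$, while the only place $p_{\min}$ enters is the change of measure at the root spin, $\mu_{T_x}^{\eta}(\sigma_x=c)\ge p_{\min}$ (conditioning on the spin at $x$ given its parent's spin, not on an edge below $x$), and this step is untouched by the starring. With that correction your outline collapses to the paper's argument; as written, the key quantitative claim $\epsilon'=\sqrt{\epsilon}/p_{\min}$ is not justified by your reasoning.
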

Now plugging $\epsilon=[(1-\delta)p_{\min}/(l+1-\delta)]^{2}$ into Lemma \ref{lem:Dror3.5} verifies the hypothesis of the following claim, which then implies Theorem \ref{thm:Dror3.4}:
\begin{claim}
	If for every $x\in T$, $\eta\in\Omega$ and any $f\ge 0$,
	\begin{equation}
		\mathrm{Ent}_{T_{x}}^{\eta}[\mu_{\tilde{T}_{x}}(f)]\le c\cdot\mu_{T_{x}}^{\eta}[\mathrm{Ent}_{B_{x,l}}^{*}(f)]+\frac{1-\delta}{l}\cdot\mu_{T_{x}}^{\eta}[\mathrm{Ent}_{\tilde{T_{x}}}(f)],\label{eq:entmixinghyp}
	\end{equation}
	then $\mathrm{Ent}(f)\le\frac{c}{\delta}\cdot\mathcal{E}_{l}^{*}(f)$ for all $f\ge 0$.
\end{claim}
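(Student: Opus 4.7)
The plan is to prove the claim by iterating the hypothesis down the tree, using the entropy chain rule and the tensorization of entropy across the conditionally independent subtrees at each vertex. Let me define the averaged quantities $F(x) = \mu[\mathrm{Ent}_{T_{x}}(f)]$ and $G(x) = \mu[\mathrm{Ent}_{B_{x,l}}^{*}(f)]$, so that $\mathcal{E}_{l}^{*}(f) = \sum_{x \in T} G(x)$ and $F(\rho) = \mathrm{Ent}(f)$. The entropy chain rule yields the identity
\[ \mathrm{Ent}_{T_{x}}^{\eta}(f) = \mathrm{Ent}_{T_{x}}^{\eta}[\mu_{\tilde{T}_{x}}(f)] + \mu_{T_{x}}^{\eta}[\mathrm{Ent}_{\tilde{T}_{x}}(f)], \]
and combining this with the hypothesis (\ref{eq:entmixinghyp}) together with the tensorization $\mathrm{Ent}_{\tilde{T}_{x}}(f) \le \sum_{i=1}^{d}\mu_{\tilde{T}_{x}}[\mathrm{Ent}_{T_{x_{i}}}(f)]$ (valid because the subtrees $T_{x_{1}},\ldots,T_{x_{d}}$ are conditionally independent given $\sigma_{x}$) gives the recursion
\[ F(x) \le c\, G(x) + \beta \sum_{i=1}^{d} F(x_{i}), \qquad \beta = 1+\tfrac{1-\delta}{l}. \]

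I would then iterate this recursion from the root to the leaves, using the boundary condition $F(x) \le c\,G(x)$ at vertices where $\tilde{T}_{x}=\emptyset$. Direct iteration produces a bound of the form $\mathrm{Ent}(f) \le c\sum_{x}\beta^{|x|}G(x)$, where $|x|$ is the depth of $x$. The crucial next step is to convert this into the target bound $(c/\delta)\mathcal{E}_{l}^{*}(f)$ by regrouping the iteration into scales of $l$ levels at a time. Over an $l$-level window the recursion accumulates a factor $\beta^{l}\le e^{1-\delta}$, which I expect to balance against the fact that each vertex $y\in T$ is covered by roughly $l+1$ distinct blocks $B_{x,l}$ (those whose root $x$ is an ancestor of $y$ within distance $l$). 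Exploiting this $l$-fold overlap and the identity $(1+\tfrac{1-\delta}{l})^{l}\sim 1+(1-\delta)+O(1/l)$, the iterated bound should telescope into a geometric series with ratio $(1-\delta)$ per block-step, producing the factor $\sum_{k\ge 0}(1-\delta)^{k} = 1/\delta$ in front of $\mathcal{E}_{l}^{*}(f)$.

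The main obstacle is precisely this algebraic reorganization: taken naively, the factor $\beta^{|x|}$ grows without bound in the depth of the tree, so the conclusion $\mathrm{Ent}(f)\le(c/\delta)\mathcal{E}_{l}^{*}(f)$ cannot follow from the per-level recursion alone. The technical heart of the argument — which I would follow from \cite{martinelli2004glauber} — is to show that the per-level growth factor $\beta = 1 + O(1/l)$ and the $l$-fold block overlap of the $G(x)$'s conspire to produce a genuine contraction of factor $(1-\delta)$ per scale of $l$ levels, so that the weighted sum $\sum_{x}\beta^{|x|}G(x)$ is in fact controlled by $\tfrac{1}{\delta}\sum_{x}G(x)$ uniformly in the depth of $T$. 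With this contraction in hand, summing the geometric series closes the argument and yields $\mathrm{Ent}(f)\le\frac{c}{\delta}\mathcal{E}_{l}^{*}(f)$.
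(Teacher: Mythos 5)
Your per-vertex recursion $F(x)\le c\,G(x)+\beta\sum_{i}F(x_{i})$ with $\beta=1+\frac{1-\delta}{l}$ is correct, but the step you defer to the literature --- converting $\sum_{x}\beta^{|x|}G(x)$ into $\frac{1}{\delta}\sum_{x}G(x)$ --- is the entire content of the claim, and the mechanism you sketch does not supply it. Over an $l$-level window your recursion accumulates $\beta^{l}\approx e^{1-\delta}>1$, which is a growth factor, not the contraction $(1-\delta)$ needed for your geometric series; and the $l$-fold overlap of the blocks cannot be exploited in your setup, because the error term you generate at $x$, namely $\frac{1-\delta}{l}\,\mu_{T_{x}}^{\eta}[\mathrm{Ent}_{\tilde{T}_{x}}(f)]$, is spread over the \emph{entire} subtree below $x$. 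Consequently every vertex $y$ receives multiplicatively compounded contributions from \emph{all} of its ancestors, not just the $l$ nearest ones, which is exactly why the weights $\beta^{|x|}$ are unbounded in the depth of $T$. No regrouping of this same recursion into scales can repair it: once the hypothesis has been applied at every level along the root-to-$x$ path, the block structure has been forgotten, so there is a genuine gap here, not merely an omitted computation.

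The paper closes the gap by localizing the error \emph{before} applying the hypothesis, and by running a self-bounding (absorption) argument rather than an iteration to the leaves. It works with the sum of pivot entropies $\mathrm{PEnt}(f)=\sum_{x\in T}\mu[\mathrm{Ent}_{T_{x}}(\mu_{\tilde{T}_{x}}(f))]$, which dominates $\mathrm{Ent}(f)$ by the level-by-level decomposition (\ref{eq:entdecomp}); then at each $x$ it applies (\ref{eq:entmixinghyp}) not to $f$ but to $g=\mu_{T_{x}\backslash B_{x,l}\cup\partial B_{x,l}}(f)$ (legitimate since the hypothesis holds for all nonnegative functions). Because $\mu_{\tilde{T}_{x}}(g)=\mu_{\tilde{T}_{x}}(f)$ while $g$ does not depend on the spins strictly below $\partial B_{x,l}$, the error term $\mathrm{Ent}_{\tilde{T}_{x}}(g)$ decomposes into pivot entropies only at vertices $y\in B_{x,l}\cup\partial B_{x,l}$, $y\neq x$. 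Each $y$ lies in at most $l$ such blocks, so summing over $x$ gives $\mathrm{PEnt}(f)\le c\,\mathcal{E}_{l}^{*}(f)+\frac{1-\delta}{l}\cdot l\cdot\mathrm{PEnt}(f)$, and absorbing the $(1-\delta)\mathrm{PEnt}(f)$ term (finite on a finite tree) yields $\mathrm{Ent}(f)\le\mathrm{PEnt}(f)\le\frac{c}{\delta}\mathcal{E}_{l}^{*}(f)$ in one step --- uniformly in the depth, with no geometric series over scales. The conditional-expectation truncation at the block boundary, together with the switch from full subtree entropies $F(x)$ to pivot entropies, is the missing idea your proposal needs.
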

\begin{proof}
	First we decompose $\mathrm{Ent}(f)$ as a sum of $\mathrm{Ent}_{T_{x}}^{\eta}[\mu_{\tilde{T}_{x}}(f)]$. Suppose $T$ have $m$ levels, consider $\varnothing=F_{0}\subset F_{1}\subset\cdots\subset F_{m+1}=T$, where $F_i$ is the lowest $i$ levels of $T$. By basic properties of conditional entropy (equation (3), (4), (5) of \cite{martinelli2004glauber}) and Markov property of Gibbs measure, we have
	\begin{align}
		\mathrm{Ent}(f) =\dots=\sum_{i=1}^{m+1}\mu[\mathrm{Ent}_{F_i}(\mu_{F_{i-1}}(f))] \le\sum_{i=1}^{m+1}\sum_{x\in F_i\backslash F_{i-1}}\mu[\mathrm{Ent}_{T_{x}}(\mu_{F_{i-1}}(f))]\le\sum_{x\in T}\mu[\mathrm{Ent}_{T_{x}}(\mu_{\tilde{T}_{x}}(f))].\label{eq:entdecomp}
	\end{align}
	Denote the final sum by $\mathrm{PEnt}(f)$. For each term in the sum of $\mathrm{PEnt}(f)$, apply (\ref{eq:entmixinghyp}) to $g=\mu_{T_{x}\backslash B_{x,l}\cup
	\partial B_{x,l}}(f)$ and perform the decomposition trick of (\ref{eq:entdecomp}) again, we have for every $x\in T$ and $\eta\in\Omega$.
	\begin{align*}
		\mathrm{Ent}_{T_{x}}^{\eta}[\mu_{\tilde{T}_{x}}(f)] & =\mathrm{Ent}_{T_{x}}^{\eta}[\mu_{\tilde{T}_{x}}(g)]\le c\cdot\mu_{T_{x}}^{\eta}[\mathrm{Ent}_{B_{x,l}}^{*}(g)]+\frac{1-\delta}{l}\cdot\mu_{T_{x}}^{\eta}[\mathrm{Ent}_{\tilde{T}_{x}}(g)]\\
		& \le c\cdot\mu_{T_{x}}^{\eta}[\mathrm{Ent}_{B_{x,l}}^{*}(f)]+\frac{1-\delta}{l}\cdot\sum_{y\in B_{x,l}\cup
		\partial B_{x,l},y\neq x}\mu_{T_{x}}^{\eta}[\mathrm{Ent}_{T_{y}}(\mu_{\tilde{T}_{y}}(f))].
	\end{align*}
	Now sum up for all $x\in T$ and take expectation w.r.t $\mu$ for $\eta\in\Omega$, noting that the first term of the last line sums up to $\mathcal{E}_{l}^{*}=\sum_{x\in T}\mu(\mathrm{Ent}_{B_{x,l}}^{*}(f))$ and each $y$ in second term appears in at most $l$ blocks so we get
	\begin{align*}
		\mathrm{PEnt}(f) & \le c\cdot\mathcal{E}_{l}^{*}(f)+\frac{1-\delta}{l}\cdot\sum_{x\in T}\sum_{y\in B_{x,l}\cup
		\partial B_{x,l},y\neq x}\mu[\mathrm{Ent}_{T_{y}}(\mu_{\tilde{T}_{y}}(f))]\\
		& \le c\cdot\mathcal{E}_{l}^{*}(f)+\frac{1-\delta}{l}\cdot l\cdot\sum_{y\in T}\mu[\mathrm{Ent}_{T_{y}}(\mu_{\tilde{T}_{y}}(f))]=c\cdot\mathcal{E}_{l}^{*}(f)+(1-\delta)\cdot\mathrm{PEnt}(f),
	\end{align*}
	and hence $\mathrm{Ent}(f)\le\mathrm{PEnt}(f)\le\frac{c}{\delta}\cdot\mathcal{E}_{l}^{*}.$
\end{proof}
Given the result of Theorem \ref{thm:Dror3.4}, it is enough to show that for some constant $\alpha$, the super concentration inequality of (\ref{eq:newcct}) implies $\mathrm{EM}^{*}(l,[(1-\delta)p_{\min}/(l+1-\delta)]^{2})$. For convenience of notation, define following two functions for each $c'\in[k]$:
\[ g_{c'}(\sigma)=\frac{\mu(\sigma|\sigma_{\rho}=c')}{\mu(\sigma)}=\frac{1}{\mu(\sigma_{\rho}=c')}\cdot 1_{\left\{ \sigma_{\rho}=c'\right\} },g_{c'}^{*(l)}=\mu_{B_{\rho,l}}^{*}(g_{c'}). \]
Letting $\delta'=(1-\delta)^{2}/\alpha(l+1-\delta)^{2}$, we can rewrite (\ref{eq:newcct}) as
\begin{equation}
	\mu\left(\left|g_{c'}^{*(l)}-1\right|>\delta'\right)\le e^{-2/\delta'}.\label{eq:newcctshort}
\end{equation}
\begin{thm}\label{thm:Dror5.3true}
	There exists a constant $C$ such that if (\ref{eq:newcctshort}) holds for some $\delta'\ge0$ and all pairs of states $c,c'\in[k]$, we have $\mathrm{Ent}[\mu_{\tilde{T}}(f)]\le C\delta'\mathrm{Ent}(f)$ for any $f\ge0$ satisfying $f(\sigma)=\mu_{B_{\rho,l}}^{*,\sigma}(f),\forall\sigma\in\Omega^{c}$, i.e. $\mathrm{EM}^{*}(l,C\delta')$ holds.
\end{thm}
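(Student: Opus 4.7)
The plan is to adapt the proof of Theorem 5.3 in \cite{martinelli2004glauber} by systematically replacing the block-averaging operator $\mu_{B_{\rho,l}}$ by the component-averaging operator $\mu^{*}_{B_{\rho,l}}$; the hypothesis $f = \mu^{*,\sigma}_{B_{\rho,l}}(f)$ plays exactly the role that block-measurability plays in the original argument. By the scaling invariance of the ratio $\mathrm{Ent}[\mu_{\tilde T}(f)]/\mathrm{Ent}(f)$ under $f\mapsto cf$, I would reduce to $\mu(f)=1$. Letting $F(c')=\mu(f\mid\sigma_\rho=c')$, we have
$$\mathrm{Ent}[\mu_{\tilde T}(f)] = \sum_{c'\in[k]} p_{c'}\,\phi(F(c')),\qquad \phi(x)=x\log x - x+1,$$
and the crucial identity (from the tower property plus constancy of $f$ on components) is
$$F(c') = \mu(f g_{c'}) = \mu\bigl[\mu^{*,\sigma}_{B_{\rho,l}}(f g_{c'})\bigr] = \mu(f g_{c'}^{*(l)}),$$
so the concentration hypothesis (\ref{eq:newcctshort}) on $g_{c'}^{*(l)}$ applies directly.

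Since $\mu(g_{c'}^{*(l)})=1$, I would next write $F(c')-1 = \mu((f-1)(g_{c'}^{*(l)}-1))$ and split the integral at the good event $A_{c'}=\{|g_{c'}^{*(l)}-1|\le\delta'\}$, which by (\ref{eq:newcctshort}) satisfies $\mu(A_{c'}^c)\le e^{-2/\delta'}$. Using $g_{c'}^{*(l)}\le p_{\min}^{-1}$,
$$|F(c')-1| \le \delta'\,\mu(|f-1|) + p_{\min}^{-1}\bigl[\mu(f\cdot 1_{A_{c'}^c}) + \mu(A_{c'}^c)\bigr].$$
Pinsker's inequality bounds $\mu(|f-1|)\le\sqrt{2\,\mathrm{Ent}(f)}$. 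For the tail term, the Donsker-Varadhan variational formula applied with parameter $t=1/\delta'$ yields
$$\mu(f\cdot 1_{A_{c'}^c}) \le \delta'\bigl[\mathrm{Ent}(f) + \log(1+(e^{1/\delta'}-1)\mu(A_{c'}^c))\bigr] \le \delta'\bigl[\mathrm{Ent}(f) + e^{-1/\delta'}\bigr],$$
where the exponent $2$ in (\ref{eq:newcctshort}) is calibrated exactly to cancel the $e^{1/\delta'}$ growth of the moment generating function. Combining, $|F(c')-1|\le C\delta'\bigl(\sqrt{\mathrm{Ent}(f)}+\mathrm{Ent}(f)\bigr)$ up to exponentially small corrections.

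To conclude, I would bound $\phi(F(c'))$ by a case split: when $F(c')\in[\tfrac12,2]$ use $\phi(x)\le(x-1)^2$ to obtain $p_{c'}\phi(F(c'))\le C(\delta')^2\mathrm{Ent}(f)(1+\mathrm{Ent}(f))$; when $F(c')\notin[\tfrac12,2]$, the inequality $|F(c')-1|\ge\tfrac12$ forces $\mathrm{Ent}(f)\ge c(\delta')^{-2}$ for some $c>0$, so the crude bound $\phi(F(c'))\le\log(1/p_{\min})/p_{c'}$ is absorbed into $C\delta'\mathrm{Ent}(f)$ by enlarging $C$. The complementary regime $\mathrm{Ent}(f)\ge \log k/(C\delta')$ is handled by the trivial bound $\mathrm{Ent}[\mu_{\tilde T}(f)]\le\log k$, since $F$ is a function on $k$ points with mean $1$. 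Summing over $c'\in[k]$ closes the argument, with $C$ depending only on $k$ and $p_{\min}$.

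The main obstacle will be the careful calibration in the Donsker-Varadhan step: the exponent $2$ in (\ref{eq:newcctshort}) is essential to produce the clean multiplicative factor $C\delta'$ without stray additive constants, and the case analysis for $F(c')$ far from $1$ must be combined with the scale threshold $\mathrm{Ent}(f)\sim(\delta')^{-2}$ to yield the linear bound $C\delta'\mathrm{Ent}(f)$ rather than a weaker $C\sqrt{\delta'}\mathrm{Ent}(f)$ bound.
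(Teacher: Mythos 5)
Your core reduction is exactly the one the paper uses: the identity $F(c')=\mu(fg_{c'})=\mu(fg_{c'}^{*(l)})$, which holds because $\mu_{B_{\rho,l}}^{*,\cdot}$ is a genuine conditional expectation and $f$ is constant on components, is the only new ingredient beyond \cite{martinelli2004glauber} (the paper phrases it as $\mathrm{Cov}(g_{c'},f)=\mathrm{Cov}(g_{c'}^{*(l)},f)$ after bounding $\mathrm{Ent}[\mu_{\tilde T}(f)]$ by a variance). Where you differ is that the paper then invokes Lemma~5.4 of \cite{martinelli2004glauber} as a black box, while you re-derive the concentration-to-bias estimate by Pinsker plus the Donsker--Varadhan inequality and then convert $|F(c')-1|$ into an entropy bound by a case analysis.

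The re-derivation has a genuine gap: you discard the ``exponentially small corrections,'' but the target $\mathrm{Ent}[\mu_{\tilde{T}}(f)]\le C\delta'\mathrm{Ent}(f)$ has no additive slack and must hold for \emph{every} $f\ge0$, including $f$ with $0<\mathrm{Ent}(f)\ll e^{-2/\delta'}$. Your bound reads $|F(c')-1|\le\delta'\sqrt{2\,\mathrm{Ent}(f)}+p_{\min}^{-1}\bigl[\delta'\mathrm{Ent}(f)+\delta'e^{-1/\delta'}+e^{-2/\delta'}\bigr]$, and the last two terms do not depend on $f$; they are unavoidable in your scheme because $\mu(f\,\mathbf{1}_{A_{c'}^{c}})$ has the floor $\mu(A_{c'}^{c})$ (take $f\equiv1$). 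In the regime $\mathrm{Ent}(f)\lesssim e^{-2/\delta'}$ this gives only $\phi(F(c'))\lesssim e^{-2/\delta'}$, which is not $\le C\delta'\mathrm{Ent}(f)$, and neither your case split on $F(c')$ nor the trivial bound (which handles \emph{large} entropy, and should be $\log(1/p_{\min})$ rather than $\log k$) covers it. The fix is to make every error term carry a factor of $\mathrm{Ent}(f)$: write $F(c')-1=\mu\bigl((g_{c'}^{*(l)}-1)(\sqrt f-1)(\sqrt f+1)\bigr)$ and apply Cauchy--Schwarz, using $\mu\bigl((\sqrt f-1)^{2}\bigr)\le\mathrm{Ent}(f)$ and $\mu\bigl((g_{c'}^{*(l)}-1)^{2}(\sqrt f+1)^{2}\bigr)\le 4\delta'^{2}+2(1+p_{\min}^{-1})^{2}\bigl[\mu(f\mathbf{1}_{A_{c'}^{c}})+\mu(A_{c'}^{c})\bigr]$, with your Donsker--Varadhan step now applied only inside the second factor; this yields $(F(c')-1)^{2}\le C\delta'\,\mathrm{Ent}(f)$ whenever $\mathrm{Ent}(f)=O(1)$, and the large-entropy regime is handled as you propose. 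Alternatively one can simply quote Lemma~5.4 of \cite{martinelli2004glauber} as the paper does. With that repair (and the $p_{\min}$ constant in the trivial bound) your argument goes through and is otherwise faithful to the paper's proof.
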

\begin{proof}
	Since for any $f'\ge0$, $\mathrm{Ent}(f')\le\mathrm{Var}(f')/\mu(f)$, we can write
	\begin{align}
		\mathrm{Ent}[\mu_{\tilde{T}}(f)] & \le\frac{\mathrm{Var}[\mu_{\tilde{T}}(f)]}{\mu(\mu_{\tilde{T}}(f))}=\frac{1}{\mu(f)}\sum_{c'\in[k]}\mu(\sigma_{\rho}=c')\left(\mu(f|\sigma_{\rho}=c')-\mu(f)\right)^{2}\nonumber \\
		& =\frac{1}{\mu(f)}\sum_{c'\in[k]}\mu(\sigma_{\rho}=c')\mathrm{Cov}(g_{c'},f)^{2}\le\max_{c'\in[k]}\frac{\mathrm{Cov}(g_{c'},f)^{2}}{\mu(f)}=\max_{c'\in[k]}\frac{\mathrm{Cov}(g_{c'}^{*(l)},f)^{2}}{\mu(f)}.\label{eq:cov}
	\end{align}
	where covariance is taken w.r.t. $\mu$ and the last step is because $f(\sigma)=\mu_{B_{\rho,l}}^{*,\sigma}(f)$. Now using Lemma 5.4 of \cite{martinelli2004glauber} (cited below) with
	\[ f_{1}=\frac{g_{c'}^{*(l)}-1}{\left\Vert g_{c'}^{*(l)}\right\Vert _{\infty}},f_{2}=\frac{f}{\mu(f)} \]
	and noting that $\left\Vert g_{c'}^{*(l)}\right\Vert _{\infty}\le\left\Vert g_{c'}\right\Vert _{\infty}\le p_{\min}$, we have $\mathrm{Cov}(g_{c'}^{*(l)},f)^{2}\le C\delta'\mu(f)\mathrm{Ent}(f)$ for some constant $C=C'/p_{\min}^{2}$. Plug it into (\ref{eq:cov}), we get $\mathrm{Ent}[\mu_{\tilde{T}}(f)]\le C\delta'\mathrm{Ent}(f).$
\end{proof}
\begin{lem}
	[Lemma 5.4 of \cite{martinelli2004glauber}] Let $\{\Omega,\mathcal{F},\nu\}$ be a probability space and let $f_{1}$ be a mean-zero random variable such that $\Vert f\Vert_{\infty}\le1$ and $\nu[|f_{1}|>\delta]\le e^{-2/\delta}$ for some $\delta\in(0,1)$. Let $f_{2}$ be a probability density w.r.t $\nu$, i.e. $f_{2}\ge0$ and $\nu(f_{2})=1$. Then there exists a numerical constant $C'>0$ independent of $\nu,f_{1},f_{2}$ and $\delta$, such that $\nu(f_{1}f_{2})\le C'\delta\mathrm{Ent}_{\nu}(f_{2}).$
\end{lem}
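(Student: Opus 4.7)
The plan is to use the Gibbs (entropic) variational formula: for any density $g$ with $\nu(g)=1$ and any measurable $\phi$,
\[
\nu(\phi g) \;\le\; \log \nu(e^{\phi}) + \mathrm{Ent}_\nu(g).
\]
Applied with $g=f_2$ and $\phi=t f_1$ for $t>0$, dividing by $t$ gives
\[
\nu(f_1 f_2) \;\le\; \frac{1}{t}\log\nu(e^{tf_1}) + \frac{1}{t}\mathrm{Ent}_\nu(f_2),
\]
so the whole proof reduces to picking a scale $t$ for which $t^{-1}\log\nu(e^{tf_1})$ is of order $\delta$ (and hence of order $\delta\,\mathrm{Ent}_\nu(f_2)$ once the entropy is suitably large).

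To estimate $\log\nu(e^{tf_1})$ I would decompose on the event $\{|f_1|\le\delta\}$ and its complement. On the complement, the bound $\|f_1\|_\infty\le 1$ together with the tail hypothesis gives
\[
\nu\bigl(e^{tf_1}\,1_{|f_1|>\delta}\bigr) \;\le\; e^{t}\,\nu(|f_1|>\delta) \;\le\; e^{\,t-2/\delta}.
\]
On $\{|f_1|\le\delta\}$, the mean-zero condition and the second-order Taylor estimate $e^x\le 1+x+ex^2$ (valid for $|x|\le 1$) yield
\[
\nu\bigl(e^{tf_1}\,1_{|f_1|\le\delta}\bigr) \;\le\; 1 + C\, t^2 \delta^2
\]
whenever $t\delta\le 1$, since $\nu(f_1^2)\le \delta^2 + \nu(|f_1|>\delta) \le \delta^2 + e^{-2/\delta} \le 2\delta^2$. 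With the scaling $t=1/\delta$ the tail contribution becomes $e^{-1/\delta}$ and the bulk contribution is bounded by a universal constant, so $\log\nu(e^{f_1/\delta})\le C_0$ and
\[
\nu(f_1 f_2) \;\le\; \delta\,C_0 + \delta\,\mathrm{Ent}_\nu(f_2).
\]
This already proves the lemma whenever $\mathrm{Ent}_\nu(f_2)$ is bounded below by a positive constant.

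The one remaining obstacle is the small-entropy regime, in which the additive $\delta C_0$ would dominate $\delta\,\mathrm{Ent}_\nu(f_2)$. To handle it I would keep $t\le 1$ and use the quadratic bound $\log\nu(e^{tf_1}) \le C t^2\delta^2$ directly, then optimize over $t\in(0,1]$ to obtain the sub-Gaussian estimate $\nu(f_1 f_2)\le C_1\,\delta\sqrt{\mathrm{Ent}_\nu(f_2)}$, which dominates the first bound precisely when the entropy is small. Combining the two regimes and adjusting constants produces $\nu(f_1 f_2)\le C'\delta\,\mathrm{Ent}_\nu(f_2)$ with a universal $C'$ independent of $\nu,f_1,f_2,\delta$; the proof only ever uses the normalization $\|f_1\|_\infty\le 1$, the mean-zero assumption, and the single tail estimate $\nu(|f_1|>\delta)\le e^{-2/\delta}$.
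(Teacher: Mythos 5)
The paper itself does not prove this lemma (it is quoted from \cite{martinelli2004glauber}), so your argument has to stand on its own; its first half does. The Donsker--Varadhan bound $t\,\nu(f_1f_2)\le\log\nu(e^{tf_1})+\mathrm{Ent}_\nu(f_2)$ with $t=1/\delta$, combined with the split on $\{|f_1|\le\delta\}$ (noting that $|\nu(f_1 1_{|f_1|\le\delta})|=|\nu(f_1 1_{|f_1|>\delta})|\le e^{-2/\delta}\le\delta^2$, and that the tail contributes at most $e^{t-2/\delta}=e^{-1/\delta}$), correctly yields $\nu(f_1f_2)\le\delta\bigl(C_0+\mathrm{Ent}_\nu(f_2)\bigr)$ for a universal $C_0$. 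The gap is the final ``small-entropy'' step. First, the claimed bound $\log\nu(e^{tf_1})\le Ct^2\delta^2$ for all $t\in(0,1]$ is not justified: for small $t$ the tail term $e^{t-2/\delta}$ and the centering error $te^{-2/\delta}$ are not dominated by $Ct^2\delta^2$. Second, and more fundamentally, even granting the sub-Gaussian estimate $\nu(f_1f_2)\le C_1\delta\sqrt{\mathrm{Ent}_\nu(f_2)}$, it cannot produce the stated conclusion: for $E=\mathrm{Ent}_\nu(f_2)\le 1$ one has $\sqrt{E}\ge E$, so $C_1\delta\sqrt{E}$ is \emph{larger} than the target $C'\delta E$ exactly in the regime you invoke it. Combining your two regimes gives only $\nu(f_1f_2)\le C\delta\bigl(\mathrm{Ent}_\nu(f_2)+\sqrt{\mathrm{Ent}_\nu(f_2)}\bigr)$, not $C'\delta\,\mathrm{Ent}_\nu(f_2)$.

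This gap cannot be repaired, because the statement with no additive correction is false as literally written. Take $\Omega=\{+,-\}$ with $\nu$ uniform, $f_1(\pm)=\pm\delta$ (mean zero, $\|f_1\|_\infty\le1$, $\nu(|f_1|>\delta)=0$) and $f_2(\pm)=1\pm\epsilon$. Then $\nu(f_1f_2)=\delta\epsilon$ while $\mathrm{Ent}_\nu(f_2)=\tfrac12\bigl[(1+\epsilon)\log(1+\epsilon)+(1-\epsilon)\log(1-\epsilon)\bigr]\le\epsilon^2$, so $\nu(f_1f_2)\le C'\delta\,\mathrm{Ent}_\nu(f_2)$ fails once $\epsilon<1/C'$. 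Any correct version must carry an additive constant or a maximum with $1$, e.g.\ $\nu(f_1f_2)\le C'\delta\max\{\mathrm{Ent}_\nu(f_2),1\}$, and that is precisely what your $t=1/\delta$ computation already establishes; in the genuinely small-entropy regime the useful complementary bound is not sub-Gaussian optimization but the elementary $\nu(f_1f_2)=\nu\bigl(f_1(f_2-1)\bigr)\le\nu(|f_2-1|)\le\sqrt{2\,\mathrm{Ent}_\nu(f_2)}$ via Pinsker. So: your method is the right one (and essentially the standard Herbst-type argument), but the conclusion you should claim is the corrected form with the additive constant, not the inequality as transcribed in the statement.
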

Now, as the last step, letting $\alpha=C/p_{\min}^{2}$ for the constant $C$ in Theorem \ref{thm:Dror5.3true} finishes the proof of Theorem \ref{thm:Dror5.3}.

\section{Results for $k$-coloring \label{sec:coloring}}

In this section we prove Theorem \ref{thm:thm2}, for which it is enough to verify the connectivity condition $\mathcal{C}$, in particular to show $p_{l}^\mathrm{free}\to 1$, as $l\to\infty$. In fact for the coloring model, as we will show in a moment, a vertex can change to all $k$ states in one step if all its children can change to 2 or 3 states in one step. We will first formalize this idea by defining the ``type'' of a vertex and then analyze the recursion with this weaker notion.

Recall the definition that for given configuration $\sigma\in\Omega_{T}$ with $\sigma_{\rho}=c$, we say the root can change to color $c'$ in one step iff there exist a path $\sigma=\sigma^{0},\sigma^{1},\dots,\sigma^{n}\in\Omega_{T}^{\sigma}$ such that for each $i,\sigma^i,\sigma^{i+1}$ differs by only one vertex and $\sigma_{\rho}^i=c,i=0,1,\dots.n-1,\sigma_{\rho}^{n}=c'.$ Let $C(\rho)$ denote the set of colors the root can change to in one step (including the original color).
We define the type of root to be rigid/type 2/type 3 if $|C(\rho)|=1/=2/\ge3$ respectively. For general vertex $x\in T$, not necessarily the root, we can similarly define $C(x)$ and rigid/type 2/type 3 by taking $x$ as the root of subtree $T_{x}$ and considering $\sigma|_{T_{x}}$. The set $C(x)$ is a function of $ \sigma_{T_x}$ and is independent of the rest of the tree.

Let $p_{l}^{r}=\mu_{l}(\mbox{the root is rigid})$, where $\mu_{l}$ is the Gibbs measure on $l$-level tree with free boundary condition.
Define $p_{l}^{(2)},p_{l}^{(3)}$ similarly, we have $p_{l}^{r}+p_{l}^{(2)}+p_{l}^{(3)}=1$. For tree $T$ with $l'>l$ levels and $x\in T$ that is $l$ levels above the bottom boundary, noting $\mu_{l'}|_{T_{x}}=\mu_{l}$, we also have
\[ \mu_{l'}(x\mbox{ is rigid/type 2/type 3})=\mu_{l'}|_{T_{x}}(x\mbox{ is rigid/type 2/type 3})=p_{l}^{r}/p_{l}^{(2)}/p_{l}^{(3)}. \]
The definition above is independent of the parent of $x$. In order to recursively analysing these probabilities, we need one further definition describing how the type of a vertex affects the type of its parent. For a given configuration $\sigma\in\Omega_{T}$ and $x\in\tilde{T}=T\backslash\{\rho\}$, recall $p(x)$ is the parent of $x$, we say $x$ is bad iff $C(x)\backslash\{\sigma_{p(x)}\}=\{\sigma_{x}\}$. Otherwise we say $x$ is good which implies $|C(x)\backslash\{\sigma_{p(x)}\}|\ge2$, i.e. $x$ has at least one more choice other than $\sigma_{p(x)}$. Note that the event that $x$ is bad depends only on $\sigma|_{T_{p(x)}}$ and given $\sigma_{x}$, for $x_i\in L_{x,1}$, events $\{x_i \mbox{ is bad}\}$ are conditionally i.i.d. and independent of the configurations outside $T_{x}$. Hence, by similar argument, we can define $p_{l}^{b}=1-p_{l}^{g}=\mu_{l'}(x\mbox{ is bad})$. The relation of rigid/type 2/type 3 and good/bad is shown in the following lemma.
\begin{lem}
	For $l'>l>0$ and $x\in T$ $l$ levels above the bottom boundary,
	\begin{equation}
		\mu_{l'}(x\mbox{ is bad}\mid x\mbox{ is rigid)=1},\mu_{l'}(x\mbox{ is bad}\mid x\mbox{ is type 2)=}\frac{1}{k-1},\mu_{l'}(x\mbox{ is bad}\mid x\mbox{ is type 3)=0},\label{eq:f23}
	\end{equation}
	Hence $p_{l}^{b}=p_{l}^{r}+\frac{1}{k-1}p_{l}^{(2)},p_{l}^{g}=p_{l}^{(3)}+\frac{k-2}{k-1}p_{l}^{(2)}.$
\end{lem}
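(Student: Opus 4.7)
The plan is to reduce the statement to a conditional probability calculation on $\sigma_{p(x)}$ given the configuration on $T_x$, handling each of the three types separately. Two preliminary observations drive the argument: the original color $\sigma_x$ is always an element of $C(x)$, and in any proper coloring $\sigma_{p(x)}\neq\sigma_x$. These already handle the extreme cases. If $x$ is rigid then $C(x)=\{\sigma_x\}$, and since $\sigma_{p(x)}\neq\sigma_x$ we get $C(x)\setminus\{\sigma_{p(x)}\}=\{\sigma_x\}$, so $x$ is bad with probability one. If $x$ is type 3 then $|C(x)|\ge 3$, so $|C(x)\setminus\{\sigma_{p(x)}\}|\ge 2$ and $x$ is automatically good.

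The only nontrivial case is type 2. When $|C(x)|=2$, write $C(x)=\{\sigma_x,c^{*}\}$ where $c^{*}=c^{*}(\sigma|_{T_x})$ is a measurable function of $\sigma|_{T_x}$. Again because $\sigma_{p(x)}\neq\sigma_x$, the event that $x$ is bad becomes exactly $\{\sigma_{p(x)}=c^{*}\}$. To evaluate this I will use two structural facts about $\mu_{l'}$ on the tree: (i) the Markov property, which implies that conditional on $\sigma_x$, the spin $\sigma_{p(x)}$ is independent of $\sigma|_{T_x\setminus\{x\}}$, so that $\mu_{l'}(\sigma_{p(x)}=c\mid\sigma|_{T_x})=\mu_{l'}(\sigma_{p(x)}=c\mid\sigma_x)$; and (ii) color-permutation symmetry of the free-boundary coloring Gibbs measure, which forces the pair marginal on the edge $(x,p(x))$ to satisfy $\mu_{l'}(\sigma_x=c_1,\sigma_{p(x)}=c_2)=1/(k(k-1))$ for $c_1\neq c_2$. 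Together these give
\[
\mu_{l'}\bigl(\sigma_{p(x)}=c\mid\sigma|_{T_x}\bigr)=\tfrac{1}{k-1}\quad\text{for every }c\neq\sigma_x.
\]
Since $c^{*}(\sigma|_{T_x})\neq\sigma_x$ on the event that $x$ is type 2, and the event $\{x\text{ is type 2}\}$ itself is measurable with respect to $\sigma|_{T_x}$, conditioning on this event and averaging over the inner constant value $1/(k-1)$ yields $\mu_{l'}(x\text{ is bad}\mid x\text{ is type 2})=\tfrac{1}{k-1}$.

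Finally, the formula for $p_l^{b}$ follows from the law of total probability applied to the partition $\{\text{rigid},\text{type 2},\text{type 3}\}$:
\[
p_l^{b}=1\cdot p_l^{r}+\tfrac{1}{k-1}\,p_l^{(2)}+0\cdot p_l^{(3)}=p_l^{r}+\tfrac{1}{k-1}p_l^{(2)},
\]
and $p_l^{g}=1-p_l^{b}$ combined with $p_l^{r}+p_l^{(2)}+p_l^{(3)}=1$ gives $p_l^{g}=p_l^{(3)}+\tfrac{k-2}{k-1}p_l^{(2)}$. The only part requiring any real care is the type 2 case; the potential subtlety there is conflating the joint law of $(\sigma|_{T_x},\sigma_{p(x)})$ with its factorization through $\sigma_x$, which is justified by the Markov property as noted.
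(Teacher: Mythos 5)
Your proof is correct and rests on the same essential ingredient as the paper's: the conditional independence, given $\sigma_x$, of the data in $T_x$ (hence of $C(x)$) from $\sigma_{p(x)}$, together with the trivial observations that $\sigma_x\in C(x)$ and $\sigma_{p(x)}\neq\sigma_x$, which dispose of the rigid and type~3 cases exactly as in the paper. The only difference is in how symmetry is invoked for the type~2 case: the paper fixes $\sigma_{p(x)}$ and argues that, given $\sigma_x$ and $|C(x)|=2$, the set $C(x)\setminus\{\sigma_x\}$ is uniform over the $k-1$ singletons of $[k]\setminus\{\sigma_x\}$, whereas you fix $C(x)=\{\sigma_x,c^*\}$ and use the Markov property plus the explicit edge marginal (uniform over ordered pairs of distinct colors, i.e.\ $M(c,c')=\frac{1}{k-1}\mathbf{1}(c\neq c')$) to conclude $\mu_{l'}(\sigma_{p(x)}=c^*\mid\sigma|_{T_x})=\frac{1}{k-1}$. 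These are dual but equivalent calculations; yours has the mild advantage that the uniformity you need is immediate from the broadcast construction of the coloring measure rather than from a symmetry argument about the law of $C(x)$, and your handling of the measurability of $\{x\text{ is type 2}\}$ and of $c^*$ with respect to $\sigma|_{T_x}$ makes the averaging step explicit. The concluding total-probability step and the identity $p_l^{g}=1-p_l^{b}$ match the paper.
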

\begin{proof}
	The first and third equation of (\ref{eq:f23}) is obvious as $|C(x)|$ and $|C(x)\backslash\{\sigma_{p(x)}\}|$ differs at most by one, and the equality about $p_{l}^{b}$ and $p_{l}^{g}$ follows immediately from the (\ref{eq:f23}). Hence we only need to show the second equation. Given $|C(x)|=2$, $x$ is bad iff $\sigma_{p(x)}\in C(x)$. So the conditional probability can be written as $P(C(x)=\{\sigma_{p(x)},\sigma_{x}\}\mid|C(x)|=2)$.
	
	Note $C(x)$ only depends on $T_{x}$, in particular it is conditionally independent of $\sigma_{p(x)}$ given $\sigma_{x}$. By symmetry, the distribution of $C(x)\backslash\{\sigma_{x}\}$ given $|C(x)|$ and $\sigma_{x}$ is uniformly distribution with probability $1/\binom{k-1}{|C(x)|-1}$. Hence
	\[ \Pr(C(x)=\{\sigma_{p(x)},\sigma_{x}\}\mid|C(x)|=2)=\frac{1}{\binom{k-1}{1}}=\frac{1}{k-1}. \]
\end{proof}
The next lemma follows a similar argument to Claim \ref{lem:freetoconnect} and Lemma \ref{lem:cnctcond}, and shows that in order to bound the probability of a vertex being free, it is enough to bound the probability of being bad.
\begin{lem}
	Suppose $k\ge4$. For any $\sigma\in\Omega_{T}$ and $x\in T$, if every child of $x$ is good, then $x$ is free.\label{lem:gdtofree}
\end{lem}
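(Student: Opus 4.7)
The plan is to show that for every target color $c\neq\sigma_{x}$, there is a valid sequence of single-site updates in the Glauber dynamics on $T_{x}$ (with $x$ as root and the leaves of $T_{x}$ fixed) along which $x$ keeps its color $\sigma_{x}$ until the very last step, when it flips to $c$. Establishing this for every $c\neq\sigma_{x}$ is exactly the definition of $x$ being free.

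First I would identify the children that obstruct the target flip. For a given $c\neq\sigma_{x}$, the only children that prevent the final single-site move $x\mapsto c$ are those $y_{i}\in L_{x,1}$ with $\sigma_{y_{i}}=c$. For each such $y_{i}$, I use the hypothesis that $y_{i}$ is good, namely $|C(y_{i})\setminus\{\sigma_{x}\}|\ge 2$. Since $\sigma_{y_{i}}=c\neq\sigma_{x}$ and $\sigma_{y_{i}}\in C(y_{i})$ automatically, the set $C(y_{i})\setminus\{\sigma_{x}\}$ already contains $c$, so goodness guarantees the existence of a second color $c_{i}\in C(y_{i})$ with $c_{i}\neq\sigma_{x}$ and $c_{i}\neq c$.

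Next I would stitch these witnesses together into a single path in $\Omega_{T_{x}}$. The subtrees $T_{y_{1}},\dots,T_{y_{d}}$ are pairwise disjoint, and the only vertex outside $T_{y_{i}}$ adjacent to $T_{y_{i}}$ is $x$, which is connected only to $y_{i}$; thus the path witnessing $c_{i}\in C(y_{i})$ (which consists entirely of moves inside $T_{y_{i}}$ and fixes $y_{i}$ at color $c$ until its final step) lifts verbatim to a valid sequence in $T_{x}$. Its intermediate moves touch only vertices of $T_{y_{i}}\setminus\{y_{i}\}$, so the edge $(x,y_{i})$ is never involved and the constraint $\sigma_{y_{i}}=c\neq\sigma_{x}$ is preserved. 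The final move changes $y_{i}$ from $c$ to $c_{i}$, which is consistent with $x$ (since $c_{i}\neq\sigma_{x}$) and with $y_{i}$'s children in $T_{y_{i}}$ (since $c_{i}\in C(y_{i})$). Processing the obstructing children one by one in this way produces a configuration in which $x$ still has color $\sigma_{x}$ but every child of $x$ has color in $[k]\setminus\{c\}$, so the single-site update $x\mapsto c$ is legal and completes the path.

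There is no serious obstacle: once one sees that goodness hands us a color $c_{i}\neq\sigma_{x},c$ in each blocking subtree, the rest is bookkeeping that exploits the disjointness of the subtrees $T_{y_{i}}$ and the fact that each such subtree interacts with the ambient tree only through the edge $(x,y_{i})$, whose constraint is maintained at every intermediate moment by the definition of $C(y_{i})$. The only point that deserves a careful sentence in the writeup is that the elementary moves comprising the $T_{y_{i}}$-path are indeed legal single-site updates of the $T_{x}$-dynamics, not merely of the $T_{y_{i}}$-dynamics.
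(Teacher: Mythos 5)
Your proof is correct and follows essentially the same route as the paper: use goodness of each child $y_{i}$ to find a color $c_{i}\in C(y_{i})\setminus\{c,\sigma_{x}\}$, change those children in one step inside their subtrees (noting the moves are also legal with $x$ present), and then flip $x$ to $c$. The only difference is cosmetic — the paper recolors every child while you recolor only the children currently colored $c$ — and your extra care about lifting the $T_{y_{i}}$-paths to the $T_{x}$-dynamics is exactly the point worth stating.
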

\begin{proof}
	Fix $c\in[k]$. Then since each child $y_i$ of $x$ is good there exist $c_i\in C(y_i)\backslash\{c,\sigma_{x}\}$. So to change $x$ from $\sigma_{x}$ to $c$ in one step, we can first change the color of every $y_i$ to $c_i$ in one step and then in the final step change $x$ from $\sigma_{x}$ to $c$. Hence $x$ is free.
\end{proof}
\medskip

Now we will show that for large enough $k$, in the region of non-reconstruction, the probability of seeing a bad vertex $l$ levels above bottom decays double exponentially fast in $l$. In fact we will prove the result for a region slightly larger than the known non-reconstruction region , which is $d\le k[\log k+\log\log k+\beta],$ for any $\beta<1-\ln 2$ (see \cite{sly2009reconstruction}).
\begin{thm}
	\label{thm: pbl} Suppose $\beta<1$, For sufficiently large $k$ and $d\le k[\log k+\log\log k+\beta]$, there exists a constant $l_{0}$ depending only on $k$ and $d$, such that for $l\ge l_{0},$
	\begin{equation}
		p_{l}^{b}\le\exp(-(k/2)^{l-l_{0}}).\label{eq:pbl_dbexp}
	\end{equation}
\end{thm}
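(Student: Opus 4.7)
The strategy is to combine a contractive recursion $p_l^b \le A\,(p_{l-1}^b)^{k-2}$ with a base-case estimate, then iterate into doubly-exponential decay.

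I first derive the recursion. Fix the root $x$ of an $l$-level subtree and condition on $\sigma_x$. Its $d$ children have i.i.d.\ colors uniform over $[k]\setminus\{\sigma_x\}$, each independently bad (with respect to parent color $\sigma_x$) with probability $p_{l-1}^b$. From the discussion preceding the theorem, a color $c\neq\sigma_x$ satisfies $c\notin C(x)$ iff some child of color $c$ is bad; writing $U$ for the number of remaining unblocked colors, $|C(x)|=1+U$. Averaging the hypothetical parent color $\sigma_{p(x)}$ uniformly over $[k]\setminus\{\sigma_x\}$ gives
\[
p_l^b \,=\, P(U=0) + \tfrac{1}{k-1}P(U=1) \,\le\, P(U\le 1).
\]
A union bound over the $\binom{k-1}{k-2}$ choices of $(k-2)$-subsets $S\subseteq[k]\setminus\{\sigma_x\}$ and over injective assignments of colors in $S$ to children yields
\[
p_l^b \,\le\, (k-1)\cdot d(d-1)\cdots(d-k+3)\cdot\Bigl(\tfrac{p_{l-1}^b}{k-1}\Bigr)^{\!k-2} \,\le\, A\,(p_{l-1}^b)^{k-2},
\]
with $A:=(k-1)(d/(k-1))^{k-2}$. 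Under $d\le k[\log k+\log\log k+\beta]$ one checks $A^{1/(k-3)} = (1+o_k(1))\log k$, so defining $Q_l:=A^{1/(k-3)}p_l^b$ converts the inequality to $Q_l\le Q_{l-1}^{k-2}$.

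The main obstacle is the base case: producing a finite $l_0=l_0(\beta,k,d)$ with $Q_{l_0}\le 1/e$, i.e.\ $p_{l_0}^b$ of order $1/\log k$, despite $p_0^b=1$ where the recursion above is useless. I track $q_l:=1-p_l^b$ directly via a Poisson approximation of $U$. Given $\sigma_x$, $U$ is a sum of $k-1$ negatively correlated indicators $\mathbf 1_{\{c \text{ unblocked}\}}$, which for large $k$ is well approximated by $\mathrm{Poi}(\lambda_l)$ with
\[
\lambda_l \,=\, (k-1)\Bigl(1-\tfrac{p_{l-1}^b}{k-1}\Bigr)^{\!d} \,\approx\, \lambda_0\,\exp\!\Bigl(\tfrac{d}{k-1}\,q_{l-1}\Bigr), \qquad \lambda_0 \,=\, (k-1)\Bigl(1-\tfrac{1}{k-1}\Bigr)^{\!d} \,\approx\, \frac{e^{-\beta}}{\log k}.
\]
When $q_{l-1}$ is small we have $q_l\approx P(\mathrm{Poi}(\lambda_l)\ge 1)\sim\lambda_l$, and rescaling $u_l:=\tfrac{d}{k-1}q_l$ reduces the leading-order dynamics to $u_{l+1}\approx e^{-\beta}e^{u_l}$. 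The crucial observation is that the map $u\mapsto e^{-\beta}e^u$ has no fixed point iff $\beta<1$: the equation $ue^{-u}=e^{-\beta}$ requires $e^{-\beta}\le\max_u ue^{-u}=e^{-1}$, i.e.\ $\beta\ge 1$. Hence, starting from $u_0=0$, the iterates strictly increase and escape to infinity in $O(1/(1-\beta))$ steps (e.g.\ for $\beta=1/2$ one verifies $u_1\approx 0.6,\,u_2\approx 1.1,\,\ldots,\,u_5>10$). Once $u_l$ has escaped the small regime, $\lambda_l$ becomes large and the Poisson estimate $p_l^b\approx e^{-\lambda_l}$ drives $p_l^b$ below $A^{-1/(k-3)}/e$ in a bounded number of further steps.

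Combining, we obtain $l_0$ with $Q_{l_0}\le 1/e$; iterating $Q_l\le Q_{l-1}^{k-2}$ gives $Q_l\le\exp(-(k-2)^{l-l_0})$ and hence
\[
p_l^b \,\le\, A^{-1/(k-3)}\exp\!\bigl(-(k-2)^{l-l_0}\bigr) \,\le\, \exp\!\bigl(-(k/2)^{l-l_0}\bigr)
\]
for $k\ge 4$, as required. The technically delicate part is making the Poisson approximation for $U$ rigorous uniformly across $q_{l-1}\in[0,1]$; this is handled by computing explicit first and second moments of $U$ using independence of the children given $\sigma_x$, with error terms of order $1/k$ and $\log\log k/\log k$ that are absorbed into the ``sufficiently large $k$'' hypothesis.
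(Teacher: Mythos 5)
Your proposal is correct in substance and shares the paper's two--phase skeleton: first drive $p_l^b$ from $1$ down below an explicit threshold within $l_0(k,d)$ levels, then iterate a union bound of the form $p_l^b\le A\,(p_{l-1}^b)^{k-2}$, which comes from the observation that a bad vertex must have at least $k-2$ bad children of distinct colors. Your second phase is essentially the paper's, with a sharper constant $A=(k-1)(d/(k-1))^{k-2}$ in place of $d^{k-2}$; this only changes the target threshold (you need $p_{l_0}^b\lesssim 1/(e\log k)$ rather than the paper's $1/(ed)$), not the structure. The genuine difference is in the bootstrap. The paper's Lemma~\ref{lem:existl0} writes the exact recursion in terms of the multinomial color counts $(d_c)$, uses monotonicity to couple them with i.i.d.\ $\mathrm{Poisson}(D)$ variables as in~\cite{sly2009reconstruction}, and proves geometric growth $1-y_{l+1}\ge(1+r)(1-y_l)$, where $r>0$ precisely because $e^{1-\beta^*}>1$. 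You instead track the number $U$ of unblocked colors through its first two moments; your "no fixed point of $u\mapsto e^{-\beta}e^u$ for $\beta<1$, with gap $1-\beta$" is the same analytic fact ($e^{-\beta}e^u\ge e^{1-\beta}u$) in different clothing, but your implementation avoids the Poissonization/coupling and Hoeffding error term and is arguably more elementary. The one place that must be made explicit is your deferred "Poisson approximation": for the escape you only need the one-sided bound $q_l\ge\frac{k-2}{k-1}P(U\ge1)\ge\frac{k-2}{k-1}\cdot\frac{\lambda_l}{1+\lambda_l}$ (second moment plus negative correlation of the indicators $\mathbf 1\{c\text{ unblocked}\}$), and to terminate the bootstrap an upper bound such as $P(U\le1)\le(k-1)\bigl(1-\tfrac{\lambda_l}{k-1}\bigr)^{k-2}$ via negative association (or your union bound reused); since throughout the escape regime $\lambda_l=\Theta(u_l/\log k)=o_k(1)$, the $1+o_k(1)$ errors cannot erase the fixed margin $1-\beta$, so the sketch does close. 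Finally, "a bounded number of further steps" should read "a number of steps depending only on $k$ and $d$", which is exactly what the theorem allows for $l_0$ (the paper's own bootstrap has the same feature).
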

Combining Lemma \ref{lem:gdtofree} and Theorem \ref{thm: pbl}, we can now finish the proof of Theorem \ref{thm:thm2}.
\begin{proof}
	[Proof of Theorem \ref{thm:thm2}] By Theorem~\ref{thm:thm1} and the known bounds on the reconstruction threshold for colorings~\cite{sly2009reconstruction} it is enough to show that the connectivity condition holds. The first part of the condition is obviously true for $k\ge 4$. For the second condition,
	\[ 1-p_{l}^\mathrm{free}=\Pr\nolimits_{\sigma\sim\mu_{l}}(\mbox{root is not free})\le\Pr\nolimits(\exists x\in L_{1},x\mbox{ is bad})\le dp_{l-1}^{b}\le d\exp(-(\frac{1}{2}k)^{l-l_{0}}), \]
	which tends to 0 as $l$ tends to infinity completing the proof.
\end{proof}
The proof of Theorem \ref{thm: pbl} is split into two phases, when $p_{l}^{b}$ is close to 1 and when $p_{l}^{b}$ is smaller than $\frac{1}{ed}$.
\begin{lem}
	Under the assumption of Theorem \ref{thm: pbl}, there exist a constant $l_{0}$ depending only on $k$ and $d$ such that $p_{l_{0}}^{b}<\frac{1}{ed}$.\label{lem:existl0}
\end{lem}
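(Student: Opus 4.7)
The plan is to derive a simple recursive upper bound $p_l^b\le F(p_{l-1}^b)$ and then show, via a fixed-point analysis, that $F^l(1)\to 0$ whenever $\beta<1$, from which the existence of $l_0$ with $p_{l_0}^b<1/(ed)$ is immediate.

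First I would set up the recursion using the Markov-chain construction. By the discussion immediately preceding the lemma, the root $x$ (with parent fixed at $\sigma_{p(x)}$) is bad iff each of the $k-2$ colors in $[k]\setminus\{\sigma_x,\sigma_{p(x)}\}$ is blocked by some child, and color $c$ is blocked iff some child $y_i$ has $\sigma_{y_i}=c$ and $y_i$ is bad. The children are conditionally i.i.d., each independently ``bad with specific color $c$'' with probability $p_{l-1}^b/(k-1)$, and at most one color can be taken by a single child. A direct inclusion-exclusion computation (using Bernoulli's inequality $(1-p/(k-1))^{j}\ge 1-jp/(k-1)$) shows that the events ``color $c$ covered'' are negatively correlated, giving
\[
p_l^b \;\le\; F(p_{l-1}^b), \qquad F(q):=\Bigl(1-(1-q/(k-1))^d\Bigr)^{k-2}.
\]

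Next I analyze the fixed points of $F$ on $[0,1]$. The function $F$ is continuous and strictly increasing with $F(0)=0$ and $F(1)<1$. Substituting $q=1-\delta$ and writing $\alpha:=d/(k-1)\approx\log k+\log\log k+\beta$, the approximation $(1-q/(k-1))^d\approx e^{-\alpha(1-\delta)}$ reduces $F(q)=q$ (after taking logs and rescaling $s:=\alpha\delta$) to the transcendental equation
\[
s-\log s \;=\; \beta
\]
to leading order as $k\to\infty$. Since $s\mapsto s-\log s$ attains its minimum value $1$ at $s=1$, this has a positive solution precisely when $\beta\ge 1$. Hence for $\beta<1$ and $k$ sufficiently large, $F$ has no nontrivial fixed point near $q=1$; combined with $F(q)\lesssim (dq/(k-1))^{k-2}=o(q)$ as $q\to 0^+$ (so $F(q)<q$ near $0$) and the continuity of $F(q)-q$, we conclude $F(q)<q$ throughout $(0,1]$.

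Finally, since $F$ is monotone and $F(q)<q$ on $(0,1]$, the iterate $\tilde q_l:=F^l(1)$ is strictly decreasing and bounded below by $0$; its limit is a fixed point, hence $0$. By the recursive bound, $p_l^b\le\tilde q_l\to 0$, so $p_{l_0}^b<1/(ed)$ for every $l$ larger than some $l_0$ depending only on $k$ and $d$. The main technical obstacle is controlling the $o(1)$ corrections in the fixed-point analysis uniformly in $q\in[0,1]$ and ruling out stray fixed points at intermediate scales such as $q\asymp 1/\log k$; at such values $(1-e^{-dq/(k-1)})^{k-2}$ is roughly $(1-1/e)^{k-2}$, which is much smaller than $q$ for large $k$, so these ranges are easily excluded.
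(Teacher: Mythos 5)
Your proposal is correct in outline but takes a genuinely different route, so let me compare. The paper dominates the multinomial colour counts $(d_2,\dots,d_k)$ by i.i.d.\ $\mathrm{Poisson}(D)$ variables with $D=\log k+\log\log k+\beta^*$ for some $\beta<\beta^*<1$, paying an additive error $p=\Pr(\mathrm{Poisson}((k-1)D)<d)$ controlled by Hoeffding, and then analyses the iterates of $f(x)=\exp(-(k-2)e^{-xD})+p$ quantitatively: it shows $1-y_l$ grows by a factor $(1+r)$ per step while $y_l$ is near $1$ (this is where $e^x\ge ex$, i.e.\ exactly your condition $s-\log s\ge 1$, enters and where $\beta<1$ is used), and then two further iterations drop below $1/(ed)$. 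You instead keep the exact multinomial and invoke negative correlation of the coverage events to get the clean recursion $p_l^b\le F(p_{l-1}^b)$ with $F(q)=(1-(1-q/(k-1))^d)^{k-2}$, and then argue softly: $F(q)<q$ on $(0,1]$ for large $k$, the monotone iterates $F^l(1)$ decrease to the unique fixed point $0$, and one picks $l_0$. This is legitimate, since the lemma only asks for an $l_0$ depending on $k$ and $d$ (no rate is needed), and it buys you freedom from the Poissonization error term and the slack parameter $\beta^*$; what it loses is the explicit control of how many levels are needed, which the paper's geometric-escape argument provides.

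Two steps in your sketch need shoring up. First, your justification of the negative correlation (``inclusion--exclusion using Bernoulli's inequality'') does not work as written: inclusion--exclusion produces alternating signs, and the termwise comparison $(1-j\tfrac{q}{k-1})^d\le(1-\tfrac{q}{k-1})^{jd}$ has the favourable direction only on half of the terms, so one cannot conclude term by term. The inequality you need, $\Pr(\text{all }k-2\text{ colours are hit by a bad child})\le\prod_c\Pr(\text{colour }c\text{ is hit})$, is nevertheless true: it is the standard negative association of multinomial occupancy indicators, which should be cited or proved (or one can simply fall back on the paper's Poisson domination, at the cost of the error term $p$). Second, the real content of the claim $F(q)<q$ is the window $1-q=O(\log\log k/\log k)$: there, after the substitution $s=(1-q)\,d/(k-1)$, the comparison reduces precisely to $e^{s-\beta}\ge e^{1-\beta}s>s$, i.e.\ the paper's use of $e^x\ge ex$, and the $o(1)$ corrections (from $(1-q/(k-1))^d$ versus $e^{-qd/(k-1)}$ and from $\log\tfrac1{1-\delta}$ versus $\delta$) must be tracked uniformly; outside this window the comparison is easy, since $F(q)\le\exp(-(k-2)(1-q/(k-1))^d)$ is polynomially small in $k$ for all $q\le 1-2\log\log k/\log k$, while $F(q)\le(qd/(k-1))^{k-2}<q$ for $q\le(k-1)/(2d)$, and these two regimes overlap. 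With those two points made precise, your argument is complete and, modulo the different treatment of the colour counts, rests on the same key inequality as the paper's.
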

\begin{proof}
	This proof is similar to Lemma 2 and Lemma 4 of \cite{sly2009reconstruction}. We recursively analyze the probabilities as a function of the depth of the tree $l$. For $l=0$, $T$ consist only the bottom boundary and hence $p_{0}^{r}=1,p_{0}^{(2)}=p_{0}^{(3)}=0,p_{0}^{b}=p_{0}^{r}+\frac{1}{k-1}p_{0}^{(2)}=1$.
	
	For $l\ge 1,$ suppose without loss of generality that the color of the root is 1 and its children are $x_{1},\dots,x_d\in L_{1}$. Let $\mathcal{F}$ denote the sigma-field generated by $\{\sigma_{x_i},i=1,\dots,d\}$ and let $d_{c}=\#\{i,\sigma_{x_i}=c\}$ be the number of children with color $c$ for $2\le c\le k$. By definition, the sizes $|C(x_i)|$ and hence the type of $x_i$ are independent of $\mathcal{F}$ and are i.i.d.~distributed. Conditioning on $\mathcal{F}$ and $(|C(x_i)|)_{i=1}^{k},$ the set $C(x_i)\backslash\{\sigma_{x_i}\}$ is uniformly randomly chosen from all subsets of $[k]\backslash\{\sigma_{x_i}\}$ with $(|C(x_i)|-1)$ elements. Therefore the number of bad vertices of color $c$ given $\mathcal{F}$ is distributed as $\mathrm{Bin}(d_{c},p_{l-1}^{b})$.
	
	Following similar argument of Lemma \ref{lem:gdtofree}, the root can change to color $c$ in one step if and only if none of the $x_i$'s with color $c$ is bad, which happens with probability $(1-p_{l-1}^{b})^{d_{c}}$. Therefore we have
	\[ p_{l}^{b}=p_{l}^{r}+\frac{1}{k-1}p_{l}^{(2)}=\prod_{c=2}^{k}\mathbb{E}\left[1-(1-p_{l-1}^{b})^{d_{c}}\right]+\frac{1}{k-1}\sum_{c'=2}^{k}\mathbb{E}\bigg[ (1-p_{l-1}^{b})^{d_{c'}}\prod_{c\neq c'}\left(1-(1-p_{l-1}^{b})^{d_{c}}\right)\bigg]. \]
	Viewing the right hand side as a function of $(d_{2},\dots,d_{k})$, increasing $d_{c}$ means adding more vertices of color $c$, which increases the probability of blocking the move of the root. Therefore $p_{l}^{b}$ is an increasing function w.r.t every $d_{c}$. By symmetry, $(d_{2},\dots,d_{k})$ follows a multi-nominal distribution. Fix $\beta<\beta^{*}<1$, let $\tilde{d}_{c}$ be i.i.d. Poisson($D$) random variables where $D=\log k+\log\log k+\beta^{*}$. We can couple $(d_{2},\dots,d_{k})$ and $(\tilde{d}_{2},\dots,\tilde{d}_{k})$ such that $(d_{2},\dots,d_{k})\le(\tilde{d}_{2},\dots,\tilde{d}_{k})$ whenever $\sum_{c=2}^{k}\tilde{d}_{c}\ge d$. Letting $p=\Pr(\mbox{Poisson}((k-1)D)<d)$, the recursion relationship satisfies
	\begin{align*}
		p_{l}^{b} & =p_{l}^{r}+\frac{1}{k-1}p_{l}^{(2)}=\prod_{c=2}^{k}\mathbb{E}\left[1-(1-p_{l-1}^{b})^{d_{c}}\right]+\frac{1}{k-1}\sum_{c'=2}^{k}\mathbb{E}\bigg[(1-p_{l-1}^{b})^{d_{c'}}\prod_{c\neq c'}\left(1-(1-p_{l-1}^{b})^{d_{c}}\right)\bigg]\\
		& \le\prod_{c=2}^{k}\mathbb{E}\left[1-(1-p_{l-1}^{b})^{\tilde{d}_{c}}\right]+\frac{1}{k-1}\sum_{c'=2}^{k}\mathbb{E}(1-p_{l-1}^{b})^{\tilde{d}_{c'}}\prod_{c\neq c'}\mathbb{E}\left[1-(1-p_{l-1}^{b})^{\tilde{d}_{c}}\right]+p\\
		& =\left(1-\exp(-p_{l-1}^{b}D)\right)^{k-1}+\frac{k-1}{k-1}\exp(-p_{l-1}^{b}D)\left(1-\exp(-p_{l-1}^{b}D)\right)^{k-2}+p\\
		& =\left(1-\exp(-p_{l-1}^{b}D)\right)^{k-2}+p\le\exp\left(-(k-2)\exp(-p_{l-1}^{b}D)\right)+p
	\end{align*}
	where the last step follows from the fact $(1-r)^{k}\le e^{-kr}$ for $0<r<1$.
	
	The rest of the proof resembles the argument of Lemma 3 of \cite{sly2009reconstruction}. Let $f(x)=\exp\left(-(k-2)\exp(-xD)\right)+p$, $f(x)$ is an increasing function in $x$. So let $y_{0}=p_{0}^{b}=1$, $y_{l}=f(y_{l-1})$, we have $p_{l}^{b}\le y_{l}$, for all $l\ge0$. It is enough to show the existence of an $l_{0}$ such that $y_{l_{0}}\le\frac{1}{ed}$. Since $\frac{d}{dx}\exp(-x)\mid_{x=0}=-1$, there exist $\epsilon,\delta>0$ such that for $0<x<\delta$, $e^{-x}\le1-(1-\epsilon)x$. For large enough $k$ such that $(k-2)\exp(-D)=\frac{k-2}{k\log k}e^{-\beta^{*}}<\delta$, we have
	\[ y_{1}=f(1)\le 1-(1-\epsilon)\frac{k-2}{k\log k}e^{-\beta^{*}}+p. \]
	Since $\beta<\beta^{*}<1$, $(k-1)D-d\ge(\beta^{*}-\beta)k+o(k)$, by Hoeffding's inequality, $p=\exp(-\Omega(\frac{k}{\sqrt{d}}))=o(k^{-2})=o(d^{-1}).$ Therefore, for large enough $k$,
	\[ y_{1}\le 1-\frac{1-\epsilon}{2e\log k}+o(k^{-1})\le 1=y_{0}. \]
	Hence $y_{l}$ is a decreasing sequence as long as $(k-2)\exp(-y_{l}D)<\delta$. Choosing $\epsilon$ small enough, there exists $r'>r>0$ such that $(1-\epsilon)e^{-\beta^{*}}>e^{-1}(1+r')$, and we have
	\begin{align*}
		1-y_{l+1} & \ge 1-\left(p+1-(1-\epsilon)(k-2)\exp(-y_{l}D)\right)\\
		& \ge(1-\epsilon)\frac{(k-2)e^{-\beta^{*}}}{k\log k}\exp((1-y_{l})\log k)-p\\
		& \ge\frac{k-2}{k}(1-\epsilon)e^{1-\beta^{*}}(1-y_{l})-p\\
		& \ge(1+r')(1-y_{l})-p\ge(1+r)(1-y_{l})
	\end{align*}
	where the second last inequality follows from inequality $e^{x}>ex$, and the last inequality follows from that $1-y_{l}\ge 1-y_{1}=O(\frac{1}{\log k})$ while $p=o(k^{-2})$. Therefore after a constant number of steps $(k-2)\exp(-y_{l}D)\ge\delta$. Now let $e^{-\delta}<\alpha'<\alpha<1$, for some constants $\alpha,\alpha'$. When $k$ is large enough, $y_{l+1}\le p+e^{-\delta}<\alpha'<1$. Then again for $k$ large enough, $\exp(-y_{l+1}D)\ge\exp(-\alpha'D)\ge\exp(-\alpha\log k)=k^{-\alpha}.$ Therefore for $k$ large enough
	\[ y_{l+2}\le p+\exp(-(k-2)\exp(-y_{l+1}D))\le p+\exp(-\frac{1}{2}k^{1-\alpha})\le\frac{1}{ed}. \]
\end{proof}
After first $l_{0}$ levels, we cannot use the same method because the error of Poisson coupling becomes non-negligible; but meanwhile, $p_{l}^{b}$ is small enough such that bounding the total number of bad children is enough to finish the proof.
\begin{proof}
	[Proof of Theorem \ref{thm: pbl}] In order for a vertex to be bad, there must be at least $k-2$ of its children which are bad. Therefore,
	\[ p_{l}^{b}\le\binom{d}{k-2}(p_{l-1}^{b})^{k-2}\le(dp_{l-1}^{b})^{k-2}. \]
	Letting $l_{0}$ be the constant in Lemma \ref{lem:existl0} we complete the proof by induction on $l$ for $l\ge l_{0}$. If $l=l_{0}$, $p_{l_{0}}^{b}\le\frac{1}{ed}\le\frac{1}{e}$. If $p_{l}^{b}$ satisfies (\ref{eq:pbl_dbexp}) for $l>l_{0}$, then for $k$ large enough such that $\log(2k\log k)\le\frac{1}{6}k$ and $k-2\ge\frac{3}{4}k$,
	\begin{align*}
		p_{l+1}^{b} & \le(dp_{l}^{b})^{k-2}\le\left(2k\log k\exp\left(-(k/2)^{l-l_{0}}\right)\right)^{k-2}=\exp(k-2)\left(-(k/2)^{l-l_{0}}+\log(2k\log k)\right)\\
		& \le\exp\left(-\frac{3}{4}k\cdot\frac{2}{3}(\frac{1}{2}k)^{l-l_{0}}\right)=\exp\left(-(k/2)^{l+1-l_{0}}\right).
	\end{align*}
	Therefore (\ref{eq:pbl_dbexp}) holds for all $l\ge l_{0}$.
\end{proof}

\bibliography{ref} \end{document}